\documentclass[letterpaper, 10pt]{article}
\usepackage[psamsfonts]{amssymb}
\usepackage{amsmath, amsthm, anysize, enumerate, color, lineno}
\usepackage[pdftex]{graphicx}

\newtheorem{theorem}{Theorem}[section]

\newtheorem{problem}[theorem]{Problem}
\newtheorem{corollary}[theorem]{Corollary}
\newtheorem{conjecture}[theorem]{Conjecture}
\newtheorem{lemma}[theorem]{Lemma}
\newtheorem{proposition}[theorem]{Proposition}
\newenvironment{proof_of}[1]{\noindent {\bf Proof of #1:}
	\hspace*{1mm}}{\hspace*{\fill} $\qedsymbol$ }


\definecolor{darkgreen}{RGB}{0,180,0}


\theoremstyle{definition}
\newtheorem{remark}[theorem]{Remark}
\newtheorem{example}[theorem]{Example}

\newcommand{\E}{\mathbb{E}}
\newcommand{\R}{\mathbb{R}}
\newcommand{\N}{\mathbb{N}}

\renewcommand{\P}{\mathbb{P}}

\newcommand{\T}{\mathcal{T}}
\newcommand{\D}{\mathcal{D}}
\newcommand{\sign}{\text{sign}}

\newcommand{\ones}{\mathbf{1}}
\newcommand{\eye}{I}
\newcommand{\basis}{\mbox{\boldmath$e$}}
\newcommand{\Basis}{\mbox{\boldmath$e$}}
\newcommand{\rank}{{\rm rank}\,}
\def\noqed{\renewcommand{\qedsymbol}{}}

\begin{document}

\title{Inverses of symmetric, diagonally dominant positive matrices}

\author{Christopher~J.~Hillar\thanks{Redwood Center for Theoretical
    Neuroscience, \texttt{chillar@msri.org}; partially supported by NSF grant IIS-0917342 and 
    an NSF All-Institutes Postdoctoral Fellowship administered by the
    Mathematical Sciences Research Institute through its core grant
    DMS-0441170.}, Shaowei Lin\thanks{Department of Mathematics, \texttt{shaowei@math.berkeley.edu}; supported by the DARPA
    Deep Learning program (FA8650-10-C-7020).}, Andre
  Wibisono\thanks{Department of Electrical Engineering and Computer
    Science, \texttt{wibisono@eecs.berkeley.edu}.} \\ $ $\\University of California, Berkeley}

\date \today

\maketitle

\begin{abstract}
We prove tight bounds for the $\infty$-norm of the inverse of symmetric, diagonally dominant positive matrices. We also prove a new lower-bound form of Hadamard's inequality for the determinant of diagonally dominant positive matrices and an improved upper bound for diagonally balanced positive matrices.  Applications of our results include numerical stability for linear systems, bounds on inverses of differentiable functions, and consistency of the maximum likelihood equations for maximum entropy graph distributions.
\end{abstract}

\section{Introduction}

An $n\times n$ real matrix $J$ is {\em diagonally dominant} if
\begin{equation*}
\Delta_i(J) := |J_{ii}| - \sum_{j \neq i} |J_{ij}| \geq 0, \quad \text{for } i = 1,\dots,n.
\end{equation*}
Irreducible, diagonally dominant matrices are always invertible, and such matrices arise often in theory and applications.  As a recent example, the work of Spielman and Teng \cite{spielman-teng04, spielman-teng06} gives algorithms to solve symmetric, diagonally dominant linear systems in nearly-linear time in the input size, a fundamental advance in algorithmic complexity theory and numerical computation.  These systems are important since they arise naturally in many practical applications of linear algebra to graph theory \cite{spielman10}.  In this paper, we study mathematical properties of the inverse and determinant of symmetric diagonally dominant matrices that have only positive entries.

By far, the most useful information about the inverses of such matrices in applications are bounds on their norms, so we discuss these properties first. A classical result of Varah~\cite{varah1975lower} states that if $J$ is \textit{strictly diagonally dominant}, i.e. if $\Delta_i(J) > 0$ for $1 \leq i \leq n$, then the inverse of $J$ satisfies the bound:
\begin{equation*}
\|J^{-1}\|_{\infty} \leq \max_{1 \leq i \leq n} \: \frac{1}{\Delta_i(J)}.
\end{equation*}
Here, $\| \cdot \|_{\infty}$ is the maximum absolute row sum of a matrix, which is the matrix norm induced by the infinity norm $|\cdot|_{\infty}$ on vectors in $\mathbb R^n$.
Generalizations of this basic estimate can be found in~\cite{varga1976diagonal},~\cite{shivakumar1996two}, and~\cite{li2008}, but all involve the quantity $\max_{1 \leq i \leq n} 1/\Delta_i(J)$. In practice, however, one sometimes requires bounds when $\Delta_i(J) = 0$ for some $i$, in which case the estimates appearing in~\cite{varah1975lower,varga1976diagonal,shivakumar1996two, li2008} do not apply. A particularly interesting case is when $\Delta_i(J) = 0$ for all $i$; we call such matrices {\em diagonally balanced}.

We prove a tight bound on $\|J^{-1}\|_\infty$ for symmetric diagonally
dominant $J$ with positive entries that is independent of the
quantities $\Delta_i(J)$, and thus also of the maximum entry of $J$.
Let $S = (n-2)\eye_n + \ones_n \ones_n^\top$ be the diagonally
balanced matrix whose off-diagonal entries are all equal to $1$, and
recall the \textit{Loewner partial ordering} on symmetric matrices: $A
\succeq B$ means that $A-B$ is positive semidefinite. We shall also write $A
\geq B$ if $A-B$ is a nonnegative matrix. In
Lemma~\ref{Lem:EigBalanced} below, we show that if $\ell >0$ and $J$ is a symmetric
diagonally dominant matrix satisfying $J \geq \ell S$, then $J \succeq
\ell S \succ 0$; in particular, $J$ is invertible. Throughout this
paper, $\eye_n$ and $\ones_n$ denote the $n \times n$ identity matrix and the $n$-dimensional column vector consisting of all ones, respectively. We also write $\eye$ and $\ones$ if the dimension $n$ is understood.

The following is our first main result. 

\begin{theorem}\label{Thm:Main}
Let $n \geq 3$. For any symmetric diagonally dominant matrix $J$ with $J_{ij} \geq \ell > 0$, we have
\begin{equation*}
\|J^{-1}\|_\infty \leq \frac{1}{\ell} \|S^{-1}\|_\infty = \frac{3n-4}{2\ell(n-2)(n-1)}.
\end{equation*}
Moreover, equality is achieved if and only if $J = \ell S$.
\end{theorem}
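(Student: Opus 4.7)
By rescaling, we may assume $\ell = 1$, so $J \ge S$ entry-wise, $J$ is SDD, and $J \succeq S$ in the Loewner order by Lemma~\ref{Lem:EigBalanced}. Set $C := \|S^{-1}\|_\infty$.

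First I verify the closed-form formula $C = (3n-4)/(2(n-1)(n-2))$. Setting $\alpha := 1/(2(n-1))$ and $v := e_1 - \alpha\ones$, a direct computation using $S = (n-2)\eye + \ones\ones^\top$ gives the clean identity $Sv = (n-2) e_1$. Hence $S^{-1} e_1 = v/(n-2)$, so
\[
\|S^{-1}e_1\|_1 = \frac{1 + (n-2)\alpha}{n-2} = \frac{3n-4}{2(n-1)(n-2)}.
\]
By the invariance of $S$ under coordinate permutations, every row of $S^{-1}$ has the same $\ell^1$ norm, so $\|S^{-1}\|_\infty$ equals this value.

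For the main inequality, the plan is a monotonicity argument along the canonical rank-one decomposition
\[
J \;=\; S + \sum_{i=1}^n d_i\, e_i e_i^\top + \sum_{1 \le i < j \le n} K_{ij}\,(e_i + e_j)(e_i + e_j)^\top,
\]
with $K_{ij} := J_{ij} - 1 \ge 0$ and $d_i \ge 0$ (the latter by the SDD of $J - S$, as established just before the theorem statement). Each summand is a rank-one positive semidefinite matrix, and adding it to an SDD matrix with entries $\ge 1$ preserves both properties. I would then prove the monotonicity lemma: \emph{for every SDD symmetric $J_0$ with entries $\ge 1$, every $\lambda \ge 0$, and every $u \in \{e_i\} \cup \{e_i + e_j : i \ne j\}$, $\|(J_0 + \lambda\, u u^\top)^{-1}\|_\infty \le \|J_0^{-1}\|_\infty$.} Applied iteratively starting from $J_0 = S$ and adding one summand at a time, this yields $\|J^{-1}\|_\infty \le C$. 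For the equality case, the strict form of the lemma (strict inequality whenever $\lambda > 0$) then forces every $d_i$ and every $K_{ij}$ to vanish, so $J = S$, and after undoing the rescaling $J = \ell S$.

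The main obstacle is the monotonicity lemma itself. By the Sherman--Morrison identity, $(J_0 + \lambda u u^\top)^{-1} = J_0^{-1} - (\lambda/c)\, w w^\top$ with $w := J_0^{-1} u$ and $c := 1 + \lambda\, u^\top J_0^{-1} u > 0$, so the target row-by-row inequality becomes
\[
\sum_{l=1}^n \bigl|(J_0^{-1})_{kl} - (\lambda/c)\, w_k w_l\bigr| \;\le\; \sum_{l=1}^n \bigl|(J_0^{-1})_{kl}\bigr|\qquad (k = 1,\ldots,n).
\]
This is not a consequence of the triangle inequality; it requires the sign pattern of $w$ to be sufficiently aligned with that of each row of $J_0^{-1}$, which is where the SDD and off-diagonal positivity hypotheses on $J_0$ enter. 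The argument splits into the two cases $u = e_i$ and $u = e_i + e_j$, and within each, into sub-cases based on the signs of the entries $(J_0^{-1})_{kl}$ and $w_l$.
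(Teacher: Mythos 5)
Your closed-form computation of $\|S^{-1}\|_\infty$ is correct, but the core of the proposal — the rank-one monotonicity lemma — is false, and the paper itself exhibits a counterexample precisely to rule out this line of attack. With $n = 4$, consider the two diagonally balanced matrices
\begin{equation*}
J = \begin{pmatrix} 12 & 4 & 1 & 7 \\ 4 & 9 & 3 & 2 \\ 1 & 3 & 7 & 3 \\ 7 & 2 & 3 & 12 \end{pmatrix}
\qquad \text{and} \qquad
H = \begin{pmatrix} 9 & 1 & 1 & 7 \\ 1 & 6 & 3 & 2 \\ 1 & 3 & 7 & 3 \\ 7 & 2 & 3 & 12 \end{pmatrix}.
\end{equation*}
Both are symmetric, diagonally dominant (in fact balanced), with all entries $\geq 1$, and $J = H + 3(e_1 + e_2)(e_1 + e_2)^\top$; yet $\|J^{-1}\|_\infty > \|H^{-1}\|_\infty$. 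This is exactly an instance of your lemma with $J_0 = H$, $u = e_1 + e_2$, $\lambda = 3$, all within your class of allowed perturbations, and the inequality goes the wrong way. The paper flags this in the introduction (the paragraph beginning ``Another standard approach\dots'' and the example immediately following). The obstruction is that the sign pattern of $w = J_0^{-1}u$ need not be compatible with the rows of $J_0^{-1}$ once $J_0$ is a generic matrix in your class; the clean ``positive diagonal, negative off-diagonal'' structure that would make your row-by-row estimate go through is a special feature of $S^{-1}$, not of arbitrary $J_0^{-1}$. Since intermediate steps in the induction can increase the norm, the step-by-step decomposition cannot establish the endpoint bound, and the proposal collapses at its main step.

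The actual proof in the paper is a global optimization argument rather than a path monotonicity argument. It works over a compact box $\D_m$ of SDD matrices and uses the Sherman--Morrison variation you wrote down, but in a first-variation sense: if $J$ maximizes $\|J^{-1}\|_\infty$ on $\D_m$ and some entry is strictly interior, the derivative condition forces the norm to be constant along that edge, so the maximizer is path-connected to a ``corner'' matrix $S + (m-\ell)P$ with $P$ a signless Laplacian (Section 3). It then computes $\lim_{t\to\infty}\|(S + tP)^{-1}\|_\infty$ exactly via the bipartite structure of the graph of $P$ and shows this limit is $\leq \|S^{-1}\|_\infty$ (Section 4), and finally verifies that $\|(S+tP)^{-1}\|_\infty$ has strictly negative derivative at $t=0$, so $S$ is an isolated maximizer (Section 5). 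If you want to salvage a direct approach, you would need an argument that compares $J$ to $S$ in one shot rather than one rank-one term at a time, since the intermediate comparisons are simply not monotone.
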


\begin{remark}\label{Ex:NonSym}
Theorem~\ref{Thm:Main} fails to hold if we relax the assumption that $J$ be symmetric.  For $t \geq 0$, consider the following diagonally balanced matrices and their inverses:
\begin{equation*}
J_t =
\begin{pmatrix} 2+t & 1 & 1+t \\ 1 & 2+t & 1+t \\ 1 & 1 & 2 \end{pmatrix};
\quad  J_t^{-1} = \frac{1}{4}
\begin{pmatrix}
\frac{t+3}{t+1} & \frac{t-1}{t+1} & -t-1 \vspace{1mm}\\
\frac{t-1}{t+1} & \frac{t+3}{t+1} & -t-1 \vspace{1mm}\\
-1 & -1 & t+3
\end{pmatrix}.
\end{equation*}
We have $\|J_t^{-1}\|_\infty \to \infty$ as $t \to \infty$. \qed
\end{remark}

In other words, the map $J \mapsto \|J^{-1}\|_\infty$ over the (translated) cone of symmetric diagonally dominant  matrices $J \geq \ell S$ is maximized uniquely at the point of the cone; i.e., when $J = \ell S$.  If the off-diagonal entries of $J$ are
bounded above by $m$ and the largest of the diagonal dominances $\Delta_i(J)$
is $\delta$, we also have the trivial lower bound:
\[\frac{1}{2m(n-1)+\delta} \leq \|J^{-1}\|_\infty,\]
which follows from  submultiplicativity of the matrix norm $\| \cdot \|_\infty$.
Therefore, the value of $\|J^{-1}\|_\infty = \Theta(\frac{1}{n})$ is tightly
constrained for bounded $\ell, m$ and $\delta$.

We now probe some of the difficulty of Theorem \ref{Thm:Main} by first deriving an estimate using standard bounds in matrix analysis.
The relation $J \succeq \ell S$ is equivalent to $J^{-1} \preceq (\ell S)^{-1} = \frac{1}{\ell} S^{-1}$~\cite[Corollary~7.7.4]{HornJohnson}, and therefore by a basic inequality~\cite[p.\ 214, Ex.\ 14]{HornJohnsonTopic}, we have $\|J^{-1}\| \leq \frac{1}{\ell} \|S^{-1}\|$ for any unitarily invariant matrix norm $\|\cdot\|$, such as the spectral $\| \cdot \|_{2}$, Frobenius, or Ky-Fan norms. It follows, for instance, that
\begin{equation}\label{Eq:SpectralInftyBound}
\|J^{-1}\|_\infty \leq \sqrt{n} \: \|J^{-1}\|_2 \leq \frac{\sqrt{n}}{\ell} \|S^{-1}\|_2 = \frac{\sqrt{n}}{(n-2)\ell}.
\end{equation}
However, this bound is $O(\frac{1}{\sqrt{n}})$, whereas the bound given in
Theorem~\ref{Thm:Main} is $O(\frac{1}{n})$. In some applications this
difference can be crucial. For instance,
we explain in Section \ref{Sec:App} how Theorem~\ref{Thm:Main} proves the consistency of the maximum likelihood estimator for some random graph distributions inspired by neuroscience \cite{HilWib}.

Another standard approach to proving norm estimates such as the one in Theorem~\ref{Thm:Main} is a perturbation analysis. More specifically, given a symmetric diagonally dominant $J$ with entries bounded below by $\ell$, one tries to replace each entry $J_{ij}$ by $\ell$ and prove that the norm of the inverse of the resulting matrix is larger. However, such a method will not succeed, even in the balanced case, as the following examples demonstrate.

\begin{example}
Consider the  following two balanced matrices:
\begin{equation*}
J = \begin{pmatrix}
12 & 4 & 1 & 7 \\
4 & 9 & 3 & 2 \\
1 & 3 & 7 & 3 \\
7 & 2 & 3 & 12
\end{pmatrix}
\ \ \text{and} \  \ 
H = \begin{pmatrix}
9 & 1 & 1 & 7 \\
1 & 6 & 3 & 2 \\
1 & 3 & 7 & 3 \\
7 & 2 & 3 & 12
\end{pmatrix}.
\end{equation*}
Here, $H$ is obtained from $J$ by changing its $(1,2)$-entry to $1$ and then keeping the resulting matrix balanced. A direct computation shows that $\|H^{-1}\|_\infty < \|J^{-1}\|_\infty$. As another example, the following two matrices:
\begin{equation*}
J = \begin{pmatrix}
3 & 2 & 1 \\ 2 & 3 & 1 \\ 1 & 1 & 2
\end{pmatrix}
\ \ \text{and} \ \
H = \begin{pmatrix}
3 & 1 & 1 \\ 1 & 3 & 1 \\ 1 & 1 & 2
\end{pmatrix} \ \ 
\end{equation*}
also have $\|H^{-1}\|_\infty < \|J^{-1}\|_\infty$. Here, the
$(1,2)$-entry was changed \emph{without} keeping the matrix balanced.
\qed
\end{example}

We next describe an interesting special case revealing some surprising combinatorics underlying Theorem \ref{Thm:Main}.
Let $P$ be a symmetric diagonally dominant matrix with $P_{ij} \in \{0,1\}$ and $\Delta_i(P) \in \{0,2\}$. 
Each such matrix $P$ is a {\em signless Laplacian} of an undirected, unweighted graph $G$, possibly with self-loops. 
The limits
\begin{equation}\label{eqn:Nlim}
N = \lim_{t \to \infty} (S + tP)^{-1}
\end{equation}
form special cases of Theorem \ref{Thm:Main}, and we compute them explicitly in Section~\ref{Sec:Infinity}. As we shall see, they are an essential calculation for our proof.  The matrices $N$ are determined by the bipartition structure of the connected components of $G$.  For instance, if $G$ is connected and not bipartite, the limit (\ref{eqn:Nlim}) is the zero matrix (see Corollary~\ref{Thm:blockconstants}).  Example \ref{Ex:Graphs} and Figure \ref{fig_graph_limits} below contain more interesting cases.  For some recent work on the general eigenstructure of signless Laplacians, we refer the reader to \cite{cvetkov2010towards3} and the references therein.

\begin{example}\label{Ex:Graphs}
Consider the chain graph $G$ with edges $\{1,n\}$ and $\{i,i+1\}$ for $i = 1,\dots,n-1$. If $n$ is odd then $N = 0$ since $G$ is not bipartite, while if $n$ is even, the limit $N$ has alternating entries:
\begin{equation*}
N_{ij} = \frac{(-1)^{i+j}}{n(n-2)}. 
\end{equation*}
As another example, consider the star graph $G$, which has edges $\{1,i\}$ for $i = 2,\dots,n$.  In this case,
\begin{flalign*}
&&N_{1i} = N_{i1} = -\frac{1}{2(n-1)(n-2)}, \; \text{for } i = 2,\dots,n;
\quad\text{ and }\quad
N_{ij} = \frac{1}{2(n-1)(n-2)}, \; \text{otherwise}. &&\qed
\end{flalign*}
\end{example}

We next describe our second collection of main results concerning the determinant and adjugate of positive, diagonally dominant symmetric matrices $J$.  Recall the formula:
\[J^{-1}  = \frac{J^{\star}}{\det(J)},\]
in which $J^{\star}$ is the adjugate (or classical adjoint) of $J$.  To prove Theorem \ref{Thm:Main}, one might first try to bound $\|J^{\star}\|_{\infty}$ from above and $\det(J)$ from below, preferably separately.  We first focus on estimating the latter, which is already nontrivial.

It is classical that the determinant of a positive semidefinite matrix $A$ is bounded above by the product of its diagonal entries:
\[ 0 \leq  \det(A) \leq \prod_{i=1}^n A_{ii}.\]
This well-known result is sometimes called Hadamard's inequality~\cite[Theorem~7.8.1]{HornJohnson}.  A lower bound of this form, however, is not possible without additional assumptions.  Surprisingly, there is such an inequality when $J$ is diagonally dominant with positive entries; and, when $J$ is also balanced, an improved upper bound.

\begin{theorem}\label{Thm:DetBound}
Let $n \geq 3$, and let $J$ be an $n \times n$ symmetric matrix with off-diagonal entries $m \geq J_{ij} \geq \ell > 0$.
\begin{enumerate}[{\bf (a)}]
  \item If $J$ is diagonally dominant, then
\begin{equation*}
\frac{\det(J)}{\prod_{i=1}^n J_{ii}} \geq \left(1-\frac{1}{2(n-2)} \sqrt{\frac{m}{\ell}} \left(1 + \frac{m}{\ell} \right) \right)^{n-1} \to \ \exp \left(-\frac{1}{2} \sqrt{\frac{m}{\ell}} \left( 1 + \frac{m}{\ell} \right)\right) \  \text{ as $n \to \infty$}.
\end{equation*}
\item If $J$ is diagonally balanced, then  
\begin{equation*}
\frac{\det(J)}{\prod_{i=1}^n J_{ii}} \leq \exp\left(-\frac{\ell^2}{4m^2}\right).
\end{equation*}
\end{enumerate}
\end{theorem}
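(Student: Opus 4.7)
Both parts reduce to the normalized matrix $A = D^{-1/2} J D^{-1/2}$ where $D = \text{diag}(J_{11},\dots,J_{nn})$, so that $\det(J)/\prod_i J_{ii} = \det(A)$.  The matrix $A$ is symmetric positive definite with unit diagonal, and since $A - \eye$ is similar to $D^{-1}(J - D)$, whose infinity-norm is at most $1$ by diagonal dominance, the eigenvalues $\mu_i = 1+\lambda_i$ of $A$ satisfy $\lambda_i \in (-1,1]$ and $\sum_i \lambda_i = \text{tr}(A-\eye) = 0$.

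For part (b), I would use the elementary calculus inequality $\log(1+x) \leq x - x^2/4$ valid for $x \in (-1, 1]$ (verified by checking that $g(x) = \log(1+x) - x + x^2/4$ has $g(0) = 0$ and $g'(x) = x(x-1)/(2(1+x))$, which is $\leq 0$ on this interval).  Summing,
\begin{equation*}
\log \det(A) = \sum_i \log(1+\lambda_i) \leq -\frac{1}{4} \sum_i \lambda_i^2 = -\frac{1}{4}\, \text{tr}\bigl((A-\eye)^2\bigr) = -\frac{1}{4} \sum_{i \neq j} \frac{J_{ij}^2}{J_{ii} J_{jj}}.
\end{equation*}
The balanced hypothesis now contributes the crucial upper bound $J_{ii} = \sum_{j\neq i} J_{ij} \leq (n-1)m$; combined with $J_{ij} \geq \ell$, the trace is at least $n(n-1) \ell^2/((n-1)m)^2 \geq \ell^2/m^2$, yielding $\det(A) \leq \exp(-\ell^2/(4m^2))$.

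For part (a), my plan is iterated Schur complement.  With $J_k$ denoting the top-left $k \times k$ principal submatrix of $J$ and $v_k = (J_{1k}, \dots, J_{k-1, k})^\top$, telescoping the identity $\det(J_k) = \det(J_{k-1})(J_{kk} - v_k^\top J_{k-1}^{-1} v_k)$ gives
\begin{equation*}
\frac{\det(J)}{\prod_i J_{ii}} = \prod_{k=2}^n \left(1 - \frac{v_k^\top J_{k-1}^{-1} v_k}{J_{kk}}\right).
\end{equation*}
Each submatrix $J_{k-1}$ inherits symmetry, diagonal dominance, and the entry bounds $[\ell, m]$, so Lemma~\ref{Lem:EigBalanced} gives $J_{k-1}^{-1} \preceq \frac{1}{\ell} S_{k-1}^{-1}$; the explicit Sherman--Morrison form $S_{k-1}^{-1} = \frac{1}{k-3}\eye - \frac{\ones \ones^\top}{2(k-2)(k-3)}$ together with the diagonal-dominance relation $J_{kk} \geq \ones^\top v_k$ reduces the $k$-th Schur factor to an explicit function of $(\|v_k\|^2, \ones^\top v_k)$.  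A case analysis optimizing this expression over $v_k \in [\ell, m]^{k-1}$ (the extremal configurations being either $v_k = m \ones$ or a mixed vector of $\ell$'s and $m$'s, depending on whether $m/\ell$ is small or large relative to $k-2$) gives the uniform single-step bound $v_k^\top J_{k-1}^{-1} v_k / J_{kk} \leq \sqrt{m/\ell}(1+m/\ell)/(2(n-2))$, and multiplying the $n-1$ Schur factors yields the theorem.

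The main obstacle is the single-step optimization in part (a): the worst vector $v_k$ depends nontrivially on $m/\ell$ versus $k-2$, and both the interior critical configuration and the extreme $v_k = m \ones$ need to be checked to get the sharp constant $\sqrt{m/\ell}(1+m/\ell)/2$.  The boundary cases $k \in \{2, 3\}$ where the Sherman--Morrison form of $S_{k-1}^{-1}$ is degenerate also require direct handling.  Part (b), by contrast, is essentially a two-line consequence of the log inequality and the trace identity once the hypothesis $J_{ii} \leq (n-1)m$ from balancedness is exploited.
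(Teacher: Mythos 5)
Your proof of part \textbf{(b)} is correct and takes a genuinely different (and cleaner) route than the paper. The paper proves \textbf{(b)} via the same block LU / Schur-complement factorization it uses for \textbf{(a)}, whereas you pass to the normalized matrix $A = D^{-1/2} J D^{-1/2}$, observe that the eigenvalue shifts $\lambda_i$ of $A-\eye$ lie in $(-1,1]$ (by similarity to $D^{-1}(J-D)$ plus positive definiteness), and combine the elementary inequality $\log(1+x)\le x - x^2/4$ with the trace identity $\mathrm{tr}\bigl((A-\eye)^2\bigr) = \sum_{i\neq j} J_{ij}^2/(J_{ii}J_{jj})$, using balancedness only to conclude $J_{ii}\le (n-1)m$. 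This self-contained two-line argument avoids the submatrix eigenvalue bounds entirely and is worth keeping; its tradeoff is that it does not seem to adapt to give the lower bound in part \textbf{(a)}.

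Your outline for part \textbf{(a)}, however, has a real gap. You apply Lemma~\ref{Lem:EigBalanced} to the $(k-1)\times(k-1)$ principal submatrix $J_{k-1}$ to get $J_{k-1}^{-1} \preceq \frac{1}{\ell} S_{k-1}^{-1}$ with $S_{k-1} = (k-3)\eye_{k-1} + \ones_{k-1}\ones_{k-1}^\top$, i.e.\ a smallest-eigenvalue bound of only $(k-3)\ell$. This is far too weak for small $k$: the structural point you are missing, and the one the paper captures in Lemma~\ref{Lem:EigBalancedSubmatrix}, is that a principal submatrix of a positive diagonally dominant matrix inherits \emph{excess} diagonal dominance, because the diagonal entries $J_{ii}$ do not shrink while the off-diagonal row sums do. Writing $J_{(i)} = H + D$ with $H$ balanced and $D \succeq (i-1)\ell\,\eye$ gives the uniform smallest-eigenvalue bound $(n-2)\ell$, independent of the block size. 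Without it, your claimed per-step bound is simply false: take $n=100$, $\ell=1$, $m=2$, $J$ balanced with $J_{14}=J_{24}=J_{34}=2$ and all other off-diagonals equal to $1$. Then $v_4 = (2,2,2)^\top$, $v_4^\top S_3^{-1}v_4 = 12 - \tfrac{1}{4}\cdot 36 = 3$, and $J_{44} = 101$, so your estimate gives $v_4^\top J_3^{-1} v_4 / J_{44} \le 3/101 \approx 0.030$, which exceeds the required $\frac{1}{2(n-2)}\sqrt{m/\ell}\,(1+m/\ell) = 3\sqrt{2}/196 \approx 0.022$. Replacing $S_{k-1}$ by $(n-2)\eye_{k-1} + \ones_{k-1}\ones_{k-1}^\top$ via Lemma~\ref{Lem:EigBalancedSubmatrix}, and then applying the reverse Cauchy--Schwarz (Kantorovich) inequality over $v_k \in [\ell,m]^{k-1}$, repairs your outline and reproduces the paper's estimate; this also removes the degeneracy at $k\in\{2,3\}$ that you flagged, since the replacement $S$ is well-conditioned for all block sizes.
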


\begin{figure}
	\begin{center}
\includegraphics[width=6.2in]{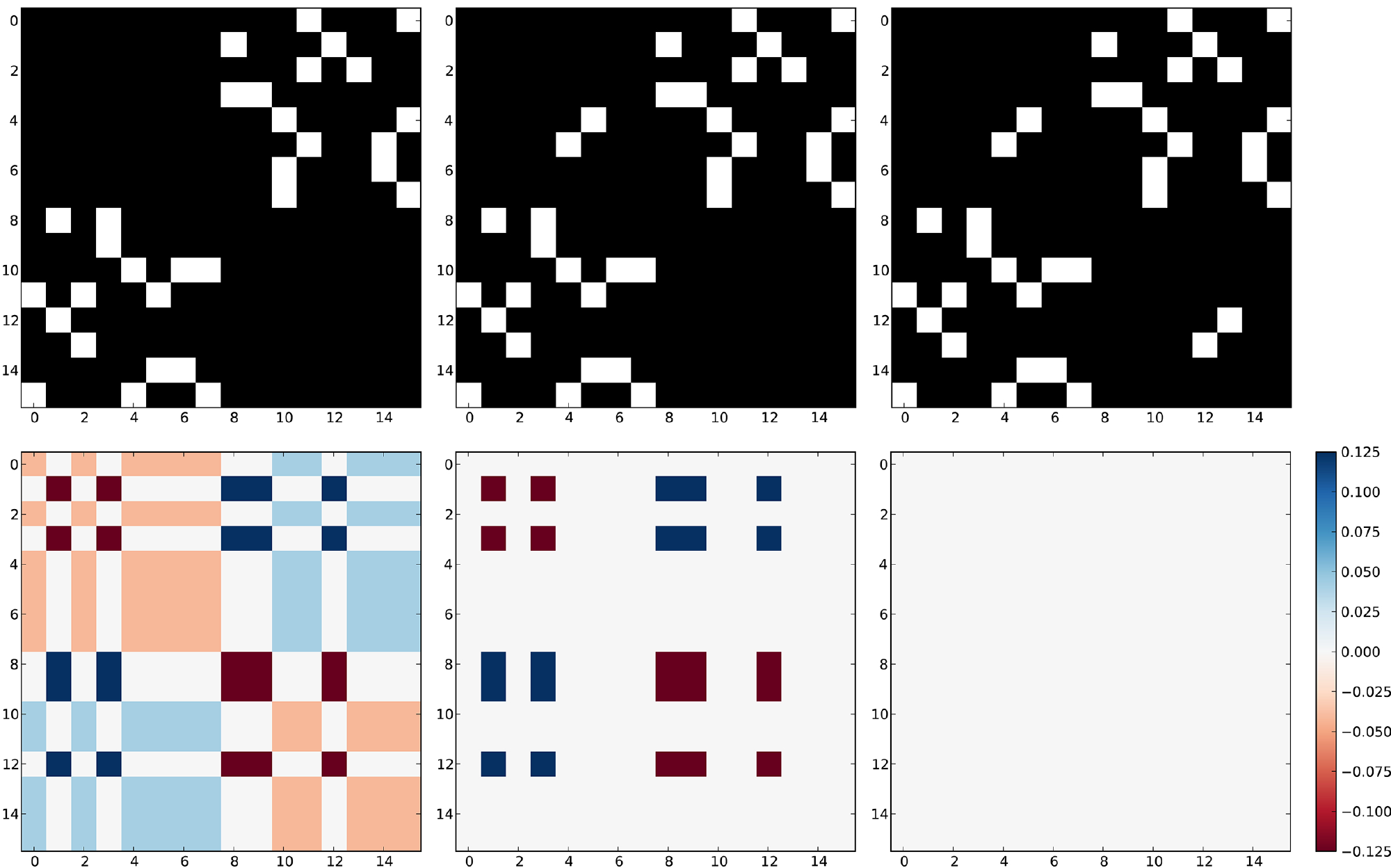}
\caption{\small{\textbf{Limits (\ref{eqn:Nlim}) for different signless Laplacian matrices $P$}.  The adjacency matrices of three graphs on $16$ vertices are represented in the top panes of the figure above.  The two bipartite graphs on the top left have structured $(S+tP)^{-1}$ for large $t$, whereas for the non-bipartite graph on the top right, this limit (bottom right) is zero.}}
\label{fig_graph_limits}
\end{center}
\end{figure}

The bounds above depend on the largest off-diagonal entry of $J$ (in an essential way for ({\bf a}); see Example \ref{NonupperBndEx}), and thus are ill-adapted to prove Theorem \ref{Thm:Main}. For instance, combining Theorem \ref{Thm:DetBound} (\textbf{a}) with Hadamard's inequality applied to the positive definite $J^{\star}$ in the obvious way gives estimates which are worse than (\ref{Eq:SpectralInftyBound}).  Nevertheless, Theorem~\ref{Thm:DetBound} should be of independent interest, and we prove it in Section~\ref{Sec:Hadamard}.

As an immediate application of Theorems \ref{Thm:Main} and \ref{Thm:DetBound}, we obtain a bound on $J^{\star}$ for balanced, positive $J$.

\begin{corollary}\label{adjugateCor}
Let $n \geq 3$, and let $J$ be a symmetric
$n \times n$ matrix $J$ with off-diagonal entries $m \geq  J_{ij} \geq
\ell > 0$.  If $J$ is diagonally balanced, then the adjugate $J^{\star}$ satisfies the bound:
\[   \frac{\|J^{\star}\|_{\infty}}{\prod_{i=1}^n J_{ii}} \leq  \frac{3n-4}{2\ell(n-2)(n-1)} e^{-\ell^2/(4m^2)}.\]
\end{corollary}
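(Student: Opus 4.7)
The plan is to combine the two headline estimates already at hand (Theorem~\ref{Thm:Main} and Theorem~\ref{Thm:DetBound}(b)) through the standard identity relating the adjugate, determinant, and inverse. Concretely, since $J$ is symmetric, diagonally dominant, and satisfies $J \geq \ell S$, the hypothesis of Lemma~\ref{Lem:EigBalanced} applies, so $J \succeq \ell S \succ 0$ and in particular $J$ is invertible with $\det(J) > 0$. I can therefore write
\begin{equation*}
J^{\star} \;=\; \det(J)\, J^{-1},
\end{equation*}
and, since $\det(J) > 0$, taking the $\infty$-norm of both sides yields
\begin{equation*}
\|J^{\star}\|_{\infty} \;=\; \det(J)\,\|J^{-1}\|_{\infty}.
\end{equation*}

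Dividing through by $\prod_{i=1}^n J_{ii}$ separates the bound cleanly into two factors:
\begin{equation*}
\frac{\|J^{\star}\|_{\infty}}{\prod_{i=1}^n J_{ii}} \;=\; \frac{\det(J)}{\prod_{i=1}^n J_{ii}} \cdot \|J^{-1}\|_{\infty}.
\end{equation*}
The first factor is bounded above by $\exp(-\ell^2/(4m^2))$ by the balanced Hadamard-type estimate of Theorem~\ref{Thm:DetBound}(b), and the second factor is bounded above by $(3n-4)/(2\ell(n-2)(n-1))$ by Theorem~\ref{Thm:Main}. Multiplying the two bounds produces exactly the inequality claimed in the corollary.

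There is no real obstacle here: the positivity of $\det(J)$, which is what lets me pull the determinant out of the $\infty$-norm without an absolute value, is the only subtlety, and it follows immediately from the positive definiteness guaranteed by Lemma~\ref{Lem:EigBalanced}. The corollary is thus a one-line consequence of the two main theorems once the identity $J^{\star} = \det(J)\, J^{-1}$ is invoked.
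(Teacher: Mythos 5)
Your proof is correct and takes exactly the route the paper intends: the paper introduces Corollary~\ref{adjugateCor} as "an immediate application of Theorems~\ref{Thm:Main} and~\ref{Thm:DetBound}," and your argument---writing $J^{\star} = \det(J)\,J^{-1}$, using $\det(J) > 0$ from positive definiteness to drop the absolute value, and then multiplying the two bounds---is precisely that immediate application.
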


We finish this introduction with a brief overview of how our main
results are proved.  Theorem~\ref{Thm:Main} will be generalized in Theorem~\ref{Thm:GeneralMain} where we consider diagonally dominant matrices $J \geq
S(\alpha,\ell) := \alpha \eye_n + \ell \ones_n \ones_n^\top$ with $\alpha \geq (n-2)\ell > 0$. We break up the proof of this general theorem into three main steps in Sections~\ref{Sec:CornerCase} to \ref{Sec:MainProof}, where we write $S(\alpha,\ell)$ as $S$ for simplicity. The first step considers the problem of maximizing $\|J^{-1}\|_\infty$ over symmetric diagonally dominant $J$
with $J_{ij}$ and $\Delta_i(J)$ in some finite intervals
(Section~\ref{Sec:CornerCase}). In this case, the maximum is achieved
when $J$ is on the \emph{corners} of the space; namely, when $J_{ij}$ and
$\Delta_i(J)$ are one of the endpoints of the finite intervals. In the
second step (Section~\ref{Sec:Infinity}), we analyze the behavior of
corner matrices at infinity and show that the limit $\|(S +
tP)^{-1}\|_\infty$ as $t \to \infty$ when $P$ is a signless Laplacian
($P_{ij} \in \{0,1\}$ and $\Delta_i(P) \in \{0,2\}$) is at most
$\|S^{-1}\|_\infty$. Combined with the first step, this verifies that the
matrix $S$ maximizes $\|J^{-1}\|_\infty$ over the space of symmetric
diagonally dominant matrices $J \geq S$. The remainder of the argument deals with the behavior of $\|J^{-1}\|_\infty$ near $S$ to show that $S$ is indeed the unique maximizer (Section~\ref{Sec:SmallT}).  All three steps are combined in Section \ref{Sec:MainProof}.  
The inequalities of Theorem \ref{Thm:DetBound} are proved using a block matrix factorization in Section~\ref{Sec:Hadamard}. 

Finally, we conclude with a brief discussion of  open questions in Section~\ref{Sec:Problems}.

\section{Applications}\label{Sec:App}

The ($\infty$-norm) condition number $\kappa_{\infty}(A) = \|A\|_{\infty}\|A^{-1}\|_{\infty}$ of a matrix $A$ plays an important role in numerical linear algebra.  For instance, the relative error $|x-\hat{x}|_{\infty}/|x|_{\infty}$ of an approximate solution $\hat{x}$ to a set of  linear equations $Ax = b$ is bounded by the product of $\kappa_{\infty}(A)$ and the relative size of the residual, $|b - A\hat{x}|_\infty/|b|_{\infty}$ (e.g., \cite[p.\ 338]{HornJohnson}).  Directly from Theorem \ref{Thm:Main}, we may bound the condition number of a positive, diagonally dominant symmetric matrix.  Thus, numerical linear computation involving such matrices is well-behaved.

\begin{corollary}
The condition number $\kappa_{\infty}(A)$ of a positive, diagonally dominant symmetric $n \times n$ matrix $A$ with largest off-diagonal entry $m$, smallest entry $\ell$, and largest diagonal dominance $\delta$ satisfies: \[\kappa_{\infty}(A) \leq \frac{(2m(n-1)+\delta)(3n-4)}{2\ell(n-2)(n-1)}.\]
In particular, the condition number $\kappa_{\infty}(A)$ is always bounded above by $3m/\ell$ for large $n$.
\end{corollary}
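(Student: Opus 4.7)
The plan is to combine Theorem~\ref{Thm:Main} with an elementary upper bound on $\|A\|_\infty$, since $\kappa_\infty(A)=\|A\|_\infty \|A^{-1}\|_\infty$ by definition.

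For the first factor, recall that $\|A\|_\infty$ is the maximum absolute row sum. Since all entries of $A$ are positive, the $i$-th row sum equals $A_{ii}+\sum_{j\neq i}A_{ij}$. The definition of diagonal dominance rewrites the diagonal as $A_{ii}=\Delta_i(A)+\sum_{j\neq i}|A_{ij}|$, so each entry of the diagonal is bounded by $\delta+(n-1)m$. Adding the off-diagonal contribution, which is at most $(n-1)m$, yields the clean estimate
\begin{equation*}
\|A\|_\infty \;\leq\; 2m(n-1)+\delta.
\end{equation*}

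For the second factor, Theorem~\ref{Thm:Main} applies directly (with the given $\ell$) and gives
\begin{equation*}
\|A^{-1}\|_\infty \;\leq\; \frac{3n-4}{2\ell(n-2)(n-1)}.
\end{equation*}
Multiplying the two bounds proves the displayed inequality for $\kappa_\infty(A)$.

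For the asymptotic statement, view $\ell, m, \delta$ as fixed constants and let $n\to\infty$. The $\delta$-contribution to the bound is $\delta(3n-4)/[2\ell(n-2)(n-1)]=O(1/n)$, while the leading term simplifies to $m(3n-4)/[\ell(n-2)]\to 3m/\ell$. Hence the right-hand side of the inequality tends to $3m/\ell$, which gives the final assertion. There is no real obstacle: the only nontrivial input is Theorem~\ref{Thm:Main}, and everything else is bookkeeping with the diagonal-dominance identity.
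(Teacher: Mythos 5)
Your proof is correct and matches the paper's intended argument: the paper states the corollary without an explicit proof, noting only that it follows ``directly from Theorem~\ref{Thm:Main},'' and the bound $\|A\|_\infty \le 2m(n-1)+\delta$ is exactly the estimate the paper uses earlier (in the discussion of the trivial lower bound on $\|J^{-1}\|_\infty$). Your asymptotic bookkeeping reproduces the paper's ``$3m/\ell$ for large $n$'' claim in the same informal sense, namely that the bound tends to $3m/\ell$ as $n\to\infty$ with $\ell,m,\delta$ fixed.
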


We next discuss another application of Theorem \ref{Thm:Main} to the numerical stability of inverses of a large family of functions.
Let $U$ be a convex open subset of $\R^n$, $n \geq 3$, and consider the map $F \colon U \to \R^n$ given by
\begin{equation}\label{F_map}
F(x) = (F_1(x), \dots, F_n(x)), \ F_i(x) = \sum_{j=1}^n g_{ij}(x_i + x_j),
\end{equation}
in which $g_{ij} \colon \R \to \R$ are any continuously differentiable functions with $g_{ij} = g_{ji}$, $g'_{ij}(x_i + x_j) \geq \ell > 0$, and $g'_{ii}(2x_i) \geq 0$ for all $1 \leq i,j \leq n$ and $x \in U$. 

\begin{theorem}\label{inv_F_cor}
For any $d, \widehat{d} \in F(U)$, we have:
\begin{equation*}
|F^{-1}(d) - F^{-1}(\widehat d)|_\infty \leq \frac{3n-4}{2\ell(n-1)(n-2)} \: |d -\widehat{d}|_\infty.
\end{equation*}
\end{theorem}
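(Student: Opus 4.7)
The plan is to reduce Theorem~\ref{inv_F_cor} to Theorem~\ref{Thm:Main} via an integrated mean value identity applied to the Jacobian of $F$. First I would compute the Jacobian: differentiating $F_i(x) = \sum_{j} g_{ij}(x_i+x_j)$ yields $J(x)_{ij} = g'_{ij}(x_i+x_j)$ for $i\neq j$ and $J(x)_{ii} = 2g'_{ii}(2x_i) + \sum_{j\neq i} g'_{ij}(x_i+x_j)$. Using $g_{ij}=g_{ji}$, the matrix $J(x)$ is symmetric, and the hypothesis $g'_{ii}(2x_i)\geq 0$ gives $\Delta_i(J(x)) = 2g'_{ii}(2x_i) \geq 0$, so $J(x)$ is symmetric diagonally dominant. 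The off-diagonal entries satisfy $J(x)_{ij} \geq \ell > 0$.

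Next, fix $d, \widehat d \in F(U)$ and pick $x,\widehat x \in U$ with $F(x)=d$, $F(\widehat x)=\widehat d$. Because $U$ is convex, the segment $\gamma(t) = \widehat x + t(x-\widehat x)$ lies in $U$ for $t\in[0,1]$, and the fundamental theorem of calculus gives
\begin{equation*}
d - \widehat d \;=\; F(x) - F(\widehat x) \;=\; \int_0^1 J(\gamma(t))(x-\widehat x)\,dt \;=\; M\,(x - \widehat x),
\end{equation*}
where $M := \int_0^1 J(\gamma(t))\,dt$. Since the set of symmetric diagonally dominant matrices with off-diagonal entries bounded below by $\ell$ is closed under averaging, $M$ is itself symmetric diagonally dominant with $M_{ij} \geq \ell > 0$ for $i\neq j$. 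In particular, by Lemma~\ref{Lem:EigBalanced} (applied with the lower bound $\ell S$) $M$ is positive definite, hence invertible.

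Now Theorem~\ref{Thm:Main} applies directly to $M$ and yields
\begin{equation*}
\|M^{-1}\|_\infty \;\leq\; \frac{3n-4}{2\ell(n-2)(n-1)}.
\end{equation*}
Inverting the identity $d-\widehat d = M(x-\widehat x)$ and taking $\infty$-norms gives the claimed inequality. A side benefit: the estimate $|x-\widehat x|_\infty \leq C\,|d-\widehat d|_\infty$ applied with $d=\widehat d$ shows $F$ is injective on $U$, so $F^{-1}$ is well-defined on $F(U)$ and the statement $|F^{-1}(d) - F^{-1}(\widehat d)|_\infty \leq C|d-\widehat d|_\infty$ is unambiguous. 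No step here is really hard once the Jacobian is identified; the only subtlety is noticing that averaging preserves the hypotheses of Theorem~\ref{Thm:Main}, which is what lets us bypass any pointwise analysis of $J(\gamma(t))^{-1}$.
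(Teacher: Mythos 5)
Your proof is correct and follows essentially the same route as the paper: compute the Jacobian, observe it is symmetric diagonally dominant with off-diagonal entries bounded below by $\ell$, invoke the integral form of the mean value theorem to get a single averaged Jacobian $M$ satisfying the same hypotheses, and then apply Theorem~\ref{Thm:Main} to $M^{-1}$. The only cosmetic differences are that you explicitly cite Lemma~\ref{Lem:EigBalanced} for positive definiteness of $M$ and add the remark about injectivity of $F$, both of which the paper leaves implicit.
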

\begin{proof}
The function $F$ is continuously differentiable on $U$, and its Jacobian $J =  J_F(x)$ satisfies:
\begin{equation*}
J_{ij}(x) = \frac{\partial F_i(x)}{\partial x_j} = g'_{ij}(x_i + x_j) \geq \ell, \quad
J_{ii}(x) = \frac{\partial F_i(x)}{\partial x_i} = 2g'_{ii}(2x_i) + \sum_{j \neq i} g'_{ij}(x_i + x_j) \geq \sum_{j \neq i} J_{ij}(x).
\end{equation*}
In particular, $J$ is symmetric and diagonally dominant with off-diagonal entries bounded below by $\ell > 0$.

By the mean-value theorem for vector-valued functions~\cite[p.~341]{Lang}, for each pair $x,y \in U$, we can write:
\begin{equation}\label{FJeqn}
F(x) - F(y) = \widetilde J (x-y),
\end{equation}
in which the matrix $\widetilde J = \int_0^1 J(tx + (1-t)y) \: dt$ is the element-wise average of the Jacobians on the line segment between $x$ and $y$. Thus, $\widetilde J$ is also symmetric and diagonally dominant with $\widetilde J_{ij} \geq \ell$. In particular, $\widetilde J$ is invertible, which shows that the map $F$ is invertible on $U$. By substituting $d = F(x)$ and $\widehat d = F(y)$ into (\ref{FJeqn}), inverting $\widetilde J$, and applying Theorem~\ref{Thm:Main} to $\widetilde J^{-1}$, we arrive at the inequality stated in the theorem.
\end{proof}

We close this section by explaining an application of Theorem \ref{inv_F_cor} to probability and statistics.  In~\cite{HilWib}, the maximum entropy distribution on (undirected) weighted graphs $A = (A_{ij})_{i,j=1}^n$ given an expected vertex degree sequence $d = (d_1,\ldots,d_n) \in \R^n$ is studied, extending the work of Chatterjee, Diaconis, and Sly~\cite{diaconis2011} in the case of unweighted graphs.  When the graphs have edges in $[0,\infty)$, this distribution $\P_\theta$ has random weights $A_{ij}$ that are independent exponential  variables with $\E[A_{ij}] = 1/(\theta_i + \theta_j) > 0$, where the parameters $\theta = (\theta_1,\ldots,\theta_n) \in \mathbb R^n$ are chosen so that the expected degree sequence of the distribution is equal to $d$.

Given $\theta$ with $1/m \leq (\theta_i + \theta_j)^2 \leq 1/\ell$ for $i \neq j$, suppose we draw a sample graph $\widehat G \sim \P_\theta$ with  weights $\widehat  A = (\widehat A_{ij})$, and let $\widehat d_i = \sum_{j \neq i} \widehat  A_{ij}$ be the degree sequence of $\widehat G$. The maximum likelihood estimator $\widehat \theta$ for $\theta$ solves the moment-matching equations (called the \textit{retina equations} in \cite{HilWib,SturmEntDisc}):
\begin{equation}\label{mom_match_eqn}
\widehat d_i = \sum_{j \neq i} \frac{1}{\widehat \theta_i + \widehat \theta_j}, \ \  \text{for } i = 1,\dots,n.
\end{equation}
It can be shown that a solution $\widehat \theta$ to (\ref{mom_match_eqn}) with $\widehat \theta_i + \widehat \theta_j > 0$, $i \neq j$, is unique almost surely and, more surprisingly, that the estimator $\widehat \theta$ is {\em consistent}. In other words, the degree sequence of one sample from a maximum entropy distribution essentially determines the distribution for large $n$.  We remark that solutions $\widehat \theta$ to equations (\ref{mom_match_eqn}) also exhibit a rich combinatorial structure which has been explored (within a more general framework) by Sanyal, Sturmfels, and Vinzant using matroid theory and algebraic geometry \cite{SturmEntDisc}.

To see how Theorem \ref{inv_F_cor} applies to this context, consider
the function $F$ determined as in (\ref{F_map}) by setting $g_{ij}(z)
= -1/z$ for $i \neq j$ and $g_{ii}=0$ otherwise. In this case,
equations (\ref{mom_match_eqn}) are concisely expressed
as~$\widehat{d} = F(-\widehat \theta)$.  Using Theorem \ref{inv_F_cor}
and arguments from large deviation theory, one can show that given any
$k > 1$ and for sufficiently large $n$, we have the estimate:
\begin{equation}\label{Eq:AppMLE-1}
|\theta - \widehat \theta|_\infty \leq  \frac{150\sqrt{m}}{\ell} \sqrt{\frac{k \log n}{n}} \ \  \text{ with probability } \geq 1-\frac{3}{n^{k-1}}.
\end{equation}
Thus, $|\theta-\widehat \theta|_\infty \to 0$ in probability
as $n \to \infty$.  We note that analogous arguments with the
$O(\frac{1}{\sqrt{n}})$ bound from~\eqref{Eq:SpectralInftyBound} fail to show
consistency.  We refer the reader to~\cite{diaconis2011} and~\cite{HilWib} for more details on these results.

\section{Reduction to Exact Limiting Cases}
\label{Sec:CornerCase}

To prove Theorem~\ref{Thm:Main}, we need to show that the maximum of
$\|J^{-1}\|_\infty$ over the space of symmetric diagonally dominant
matrices $J \geq S:=\alpha \eye_n + \ell \ones_n\ones_n^\top$ is achieved at $J = S$. A priori, it is not even clear that a maximum exists since this space is not compact. In this section we consider maximizing $\|J^{-1}\|_\infty$ over compact sets of symmetric diagonally dominant matrices $J$, and we show that the maxima occur at the corners of the space. In subsequent sections we analyze these corner matrices in more detail.

Fix $m \geq 1$, and let $\D = \D_m$ denote the set of $n \times n$
matrices of the form $J = S + (m-\ell)P$ where $P$ is some symmetric diagonally dominant matrix
satisfying
\begin{equation*}
0 \leq P_{ij} \leq 1 \;\text{ for }\; i \neq j
\quad \text{ and } \quad
0 \leq \Delta_i(P) \leq 2 \;\text{ for }\; i = 1,\dots,n.
\end{equation*}
We say that $J \in \D$ is a {\em corner matrix} if
\begin{equation*}
P_{ij} \in \{0,1\} \;\text{ for }\; i \neq j
\quad \text{ and } \quad
\Delta_i(P) \in \{0,2\} \;\text{ for }\; i = 1,\dots,n.
\end{equation*}
Equivalently, $J$ is a corner matrix if $P$ is a signless Laplacian
matrix. Let $\T$ denote the set of matrices $J \in \D$ that maximize
$\|J^{-1}\|_\infty$. This set is closed and nonempty since the
function $J \mapsto \|J^{-1}\|_\infty$ is continuous and $\D$ is
compact in the usual topology. Let $\basis_1,\dots,\basis_n$ be the
standard column basis for $\R^n$, and set $\Basis_{ii} =
\basis_i\basis_i^\top$ and $\Basis_{ij} =
(\basis_i+\basis_j)(\basis_i+\basis_j)^\top$ for all $i \neq j$. Our main result in this section is the following.

\begin{proposition}\label{Prop:CornerCaseConnected}
Every $J \in \T$ is path-connected to a corner matrix.
\end{proposition}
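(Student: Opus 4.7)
The plan is to exploit the rank-one structure of each $\Basis_{ij}$ via Sherman--Morrison, show that $\|J^{-1}\|_\infty$ is convex along a suitable reparameterization of each coordinate direction in $\D$, and then iterate to push each ``free'' coordinate of $J$ to an extreme value while remaining in $\T$.

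I would first observe that in the basis $\{\Basis_{ij}\}_{i\leq j}$, writing $P = \sum_{i\leq j} p_{ij}\Basis_{ij}$ gives $P_{ij}=p_{ij}$ for $i\neq j$ and $\Delta_i(P)=p_{ii}$, so the constraints defining $\D$ decouple into the box $[0,1]^{\binom{n}{2}}\times[0,2]^n$ in the $p$-coordinates; the corner matrices are precisely its vertices. Now fix $J\in\T$ with some coordinate $p_{ij}$ strictly interior, and write $\Basis_{ij}=vv^\top$ (with $v=\basis_i+\basis_j$ or $v=\basis_i$). On the maximal $t$-interval $[t_0,t_1]$ keeping $p_{ij}$ inside its box side, every $J(t)=J+t\,vv^\top$ lies in $\D$, hence is positive definite by Lemma~\ref{Lem:EigBalanced}; in particular $1+tc>0$ throughout, where $c=v^\top J^{-1}v>0$. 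The Sherman--Morrison identity
\[
J(t)^{-1} \;=\; J^{-1} - \frac{t}{1+tc}\,uu^\top, \qquad u := J^{-1}v,
\]
combined with the monotone Möbius change of variables $\sigma = t/(1+tc)$, yields $J(\sigma)^{-1}=J^{-1}-\sigma\,uu^\top$, whose entries are \emph{affine} in $\sigma$.

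Because each $|(J^{-1})_{ik}-\sigma u_iu_k|$ is convex in $\sigma$, the function $\varphi(\sigma):=\|J(\sigma)^{-1}\|_\infty = \max_i \sum_k |(J^{-1})_{ik}-\sigma u_iu_k|$ is convex on the admissible interval $[\sigma_0,\sigma_1]:=\sigma([t_0,t_1])$. Since $p_{ij}$ is strictly interior and $\sigma(\cdot)$ is a homeomorphism with $\sigma(0)=0$, the point $0$ lies strictly inside $[\sigma_0,\sigma_1]$; also $\varphi\leq M:=\max_{A\in\D}\|A^{-1}\|_\infty$ on $[\sigma_0,\sigma_1]$ with $\varphi(0)=M$. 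Writing $0$ as a non-trivial convex combination of any $\sigma\in[\sigma_0,\sigma_1]$ and the opposite endpoint then forces $\varphi(\sigma)\geq M$, hence $\varphi\equiv M$ on $[\sigma_0,\sigma_1]$, so the entire $t$-segment lies in $\T$. Thus $J$ is path-connected in $\T$ to the matrix obtained by driving $p_{ij}$ to one of its extreme values.

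Since the $p$-coordinates are independent in the box parameterization, this move leaves coordinates already at extremes untouched; iterating on each remaining strictly-interior coordinate terminates after at most $\binom{n+1}{2}$ steps at a corner matrix, with the concatenated path lying entirely in $\T$. The main conceptual step is recognizing that the Möbius substitution linearizes the Sherman--Morrison perturbation and convexifies $\|\cdot^{-1}\|_\infty$ along these coordinate lines; without it, a naive ``max at endpoints'' observation only yields that \emph{some} endpoint of each segment is in $\T$, which is not enough to build a continuous path inside $\T$.
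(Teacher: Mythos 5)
Your proof is correct and takes a genuinely different route from the paper, so let me compare. Both arguments start from the same two observations: that $\mathcal{D}$ decouples into a box in the coordinates $\big(P_{ij}\big)_{i<j}$ and $\big(\Delta_i(P)\big)_i$, and that the one--parameter deformation $J+t\Basis_{ij}$ can be analyzed via the Sherman--Morrison formula since $\Basis_{ij}=vv^\top$ is rank one. From there the paper argues \emph{locally}: it proves a separate lemma (Lemma~\ref{Prop:Deformation}) showing that if $J\in\T$ has a strictly interior coordinate, then $J+t\Basis_{ij}$ remains in $\T$ for all small $|t|$, and the proof of that lemma is a sign analysis of the perturbed column $1$-norm, expanding $|(J+t\Basis_{ij})_q^{-1}|_1$ to first order and using the maximality of $J$ to force the linear and ``new-sign'' contributions $\phi_{ij}$ and $\psi_{ij}$ to vanish. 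Proposition~\ref{Prop:CornerCaseConnected} then follows from an open--closed argument on the $t$-segment (phrased as a minimality contradiction in $\mathcal{W}$). Your argument is \emph{global}: the Möbius substitution $\sigma=t/(1+tc)$ (valid since $1+tc>0$ along the admissible interval, which follows from $\det(J+tvv^\top)=(1+tc)\det J>0$) makes the entries of $J(\sigma)^{-1}$ affine in $\sigma$, hence $\varphi(\sigma)=\|J(\sigma)^{-1}\|_\infty$ is convex, and a convex function attaining its global maximum at an interior point of its domain is constant on the whole interval. This yields in one stroke that the \emph{entire} segment lies in $\T$, with no need for a local derivative lemma or a topological connectedness step. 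The trade-off is that the paper's Lemma~\ref{Prop:Deformation} gives sharper local information (vanishing of $\phi_{ij}$ and $\psi_{ij}$) that fits its overall perturbative machinery, whereas your convexity observation is shorter, avoids the sign bookkeeping entirely, and makes the ``drive each coordinate to an extreme, then iterate'' structure transparent. The iteration at the end is sound: each step fixes one $p$-coordinate at a box endpoint without disturbing the others (since $\Basis_{ij}$ only moves that coordinate), and the process terminates in at most $\binom{n}{2}+n$ steps at a corner matrix with the concatenated path in $\T$.
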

\begin{proof}
Let $J \in \mathcal{T}$. We will show that if $\ell <
J_{ij} < m$ for some $i \neq j$, there is a path in $\T$ from $J$ to a matrix $J'$ that differs from $J$ only
in the $(i,j)$-entry, with $J'_{ij} \in \{\ell,m\}$. Similarly, if $0 < \Delta_i(J) < 2(m-\ell)$ for some $1 \leq
i \leq n$, then we find a suitable $J'$ with $\Delta_i(J') \in \{0,2(m-\ell)\}$ that differs from $J$ in the $(i,i)$-entry. Repeatedly applying these steps, it follows that there is a path in $\T$  from any $J \in T$ to a corner matrix.

For the first part, suppose that $\ell < J_{ij} < m$ for some $i \neq j$. Consider the nonempty, closed set
\begin{equation*}
\mathcal{W} = \{J + t\Basis_{ij} \colon \ell \leq J_{ij} + t \leq m \} \;\cap\; \mathcal{T}.
\end{equation*}
We claim that $\mathcal{W}$ contains a matrix $J'$ with $J'_{ij} \in
\{\ell,m\}$. Suppose not, and let $J' \in \mathcal{W}$  be a matrix with
minimum $(i,j)$-entry. By Proposition~\ref{Prop:Deformation} {\bf (a)}
below, $J' + t\Basis_{ij} \in \T$ for all $t$ in a small neighborhood
of the origin. Thus, there is another matrix in $\T$ that has a smaller
$(i,j)$-entry than $J'$, a contradiction. The proof
for the other part is similar (using
Proposition~\ref{Prop:Deformation} {\bf (b)}). 
\end{proof}

To complete the proof of Proposition~\ref{Prop:CornerCaseConnected}, it remains to show the following.

\begin{lemma}\label{Prop:Deformation}
Let $J \in \T$ and $i \neq j$ be distinct indices in $\{1, \ldots, n\}$.
\begin{enumerate}[{\bf (a)}]
  \item If $1 < J_{ij} < m$,  then $J + t\Basis_{ij} \in \T$ for all
    $t \in \R$ in some neighborhood of the origin.
  \item If $0 < \Delta_i(J) < 2(m-1)$, then  $J + t\Basis_{ii} \in \T$
    for all
    $t \in \R$ in some neighborhood of the origin.
\end{enumerate}
\end{lemma}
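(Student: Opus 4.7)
The plan is to exploit that both perturbations in the lemma are rank-one positive semidefinite, so the Sherman-Morrison formula controls $(J+tE)^{-1}$ explicitly as a Möbius function of $t$. First I would verify that $J+tE$ stays in $\D$ for all sufficiently small $|t|$: in \textbf{(a)}, $\Basis_{ij} = (\basis_i+\basis_j)(\basis_i+\basis_j)^\top$ adds $t$ to each of $J_{ii}, J_{ij}, J_{ji}, J_{jj}$, so every diagonal dominance $\Delta_k$ is preserved and only $J_{ij}$ moves; in \textbf{(b)}, $\Basis_{ii} = \basis_i\basis_i^\top$ changes only $J_{ii}$, hence only $\Delta_i$. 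In each case the given strict bounds on $J_{ij}$ and $\Delta_i(J)$ produce a two-sided neighborhood of $t=0$ that remains inside $\D$. Since $J\in\T$ maximizes $\|J^{-1}\|_\infty$ on $\D$, this containment already forces $f(t) := \|(J+tE)^{-1}\|_\infty \leq f(0)$ for all small $t$, and the real work is to promote this to equality.

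Next, fix a row $k$ of $J^{-1}$ achieving the maximum absolute row sum, and pick a sign vector $\sigma \in \{-1,+1\}^n$ with $\sigma_l = \sign((J^{-1})_{kl})$ whenever this entry is nonzero, so that $g(0) := \basis_k^\top J^{-1} \sigma = \sum_l |(J^{-1})_{kl}| = f(0)$. Writing $E = uu^\top$ (with $u = \basis_i+\basis_j$ in \textbf{(a)} and $u = \basis_i$ in \textbf{(b)}), the Sherman-Morrison identity gives the Möbius form
\[
g(t) := \basis_k^\top (J+tE)^{-1}\sigma = \alpha + \frac{\beta}{1+tc},
\]
where $c = u^\top J^{-1} u > 0$ (positivity uses $J \succeq S \succ 0$) and $\alpha,\beta$ are scalars determined by $J,k,\sigma,u$ with $\alpha+\beta = g(0)$. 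By the triangle inequality and the one-sided bound from the previous paragraph,
\[
g(t) \;\leq\; |g(t)| \;\leq\; \sum_l \bigl|((J+tE)^{-1})_{kl}\bigr| \;\leq\; f(t) \;\leq\; f(0) \;=\; g(0)
\]
for all small $t$, on both sides of the origin.

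A function $\alpha + \beta/(1+tc)$ with $c>0$ is strictly monotone in a neighborhood of $t=0$ unless $\beta=0$, so the two-sided local maximum at $t=0$ forces $\beta = 0$, and hence $g$ is identically $g(0)$ in a neighborhood of $0$. Retracing the displayed chain with $g(t)=g(0)$ collapses every intermediate inequality to an equality; in particular $f(t)=f(0)$, so $J+tE\in\T$ as desired.

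I expect the main obstacle to be neither the Sherman-Morrison calculation nor the Möbius rigidity argument (each is routine), but rather the initial bookkeeping that the symmetric four-entry update $\Basis_{ij}$ really preserves all the defining inequalities of $\D$ on \emph{both} sides of $t=0$. That two-sidedness is essential: it is what converts the one-sided estimate $f(t)\leq f(0)$ into the vanishing of the linear term of $g$ and drives the whole argument.
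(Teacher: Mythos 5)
Your proof is correct and follows essentially the same route as the paper: fix a maximizing row index, apply Sherman--Morrison, and use the two-sided local maximality of $J$ in $\T$ (after the bookkeeping that $J+tE$ stays in $\D$) to kill the linear coefficient of the resulting Möbius expression, collapsing the inequality chain. The one small technical improvement over the paper's argument is your use of the fixed sign vector $\sigma$: it replaces the paper's column 1-norm (which has a $|t|$ kink and requires a short sign-preservation argument over the support set $\mathcal I$) by the smooth scalar $g(t)=\basis_k^\top(J+tE)^{-1}\sigma$, so the rigidity step for a genuine Möbius function applies directly.
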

\begin{proof}
We only prove {\bf (a)} as {\bf (b)} is analogous. Suppose that $1 < J_{ij} < m$ for some $i \neq j$. Let $K = J^{-1}$ and set $K_1,\dots,K_n$ to be its columns. Also, let $q \in \arg\max_{1 \leq p \leq n} |K_p|_1$~and $\mathcal{I} = \{1 \leq p \leq n \colon K_{pq} \neq 0\}$ so that
\begin{equation*}
\sum_{p \in \mathcal{I}} |K_{pq}| = |K_q|_1 = \|K\|_\infty = \|J^{-1}\|_\infty.
\end{equation*}
By the Sherman-Morrison-Woodbury formula, we have
\begin{equation*}
(J + t\Basis_{ij})^{-1} = K - \frac{t}{1+t(\basis_i+\basis_j)^\top K(\basis_i+\basis_j)} \; K\Basis_{ij}K = K - \frac{t}{1+\eta_{ij}t} \; (K_i+K_j)(K_i+K_j)^\top
\end{equation*}
where $\eta_{ij} := (\basis_i+\basis_j)^\top K(\basis_i+\basis_j) > 0$
since $K \succ 0$. The formula implies that for sufficiently small
$\varepsilon > 0$ and all $|t| \leq \varepsilon$, the $(p,q)$-entry of $(J + t\Basis_{ij})^{-1}$ has the same
sign as $K_{pq} \neq 0$ for all $p \in \mathcal{I}$. Let us further
suppose that $\varepsilon$ is small enough so that $J+t\Basis_{ij} \in
\D$ and $1+\eta_{ij}t > 0$ for all $|t|\leq \varepsilon$. The $1$-norm
of the $q$-th column $(J + t\Basis_{ij})_q^{-1}$ of the matrix of $(J
+ t\Basis_{ij})^{-1}$ now satisfies:
\begin{equation*}
|(J + t\Basis_{ij})_q^{-1}|_1 = \|J^{-1}\|_\infty -
\frac{\phi_{ij}t}{1+\eta_{ij}t} + \frac{\psi_{ij}|t|}{1+\eta_{ij}t},
\quad \text{for all $|t|\leq \varepsilon$}
\end{equation*}
where $\phi_{ij} := (K_{iq}+K_{jq}) \sum_{p \in \mathcal{I}} \sign(K_{pq})(K_{ip}+K_{jp})$ and
$\psi_{ij} := |K_{iq}+K_{jq}| \sum_{p \notin \mathcal{I}} |K_{ip}+K_{jp}|$. Recalling that $J \in \T$ achieves the maximum $\|J^{-1}\|_\infty$, it follows that 
\begin{equation*}
\phi_{ij}t \geq \psi_{ij}|t|, \ \  \text{for all } |t| < \varepsilon.
\end{equation*}
This inequality implies that $\psi_{ij} = \phi_{ij}= 0$, so
$\|(J+t\Basis_{ij})^{-1}\|_\infty = \|J^{-1}\|_\infty$ for all $|t| <
\varepsilon$ as required.
\end{proof}

\section{Boundary Combinatorics and Exact Formulae}
\label{Sec:Infinity}

In the previous section, we saw that corner matrices played an important
role in the optimization of $\|J^{-1}\|_\infty$ over all matrices $J
\geq S :=  \alpha \eye_n + \ell \ones_n\ones_n^\top$. A corner matrix may be expressed as:
$$J = S+ (m-\ell)P,$$
in which $P$ is a symmetric diagonally dominant matrix with $P_{ij} \in
\{0,1\}$ for all $i \neq j$ and $\Delta_i(P) \in \{0,2\}$ for $1 \leq
i \leq n$. Every such matrix $P$ is a \textit{signless Laplacian} of
an undirected unweighted graph (possibly with self-loops) $G = (V,E)$
on vertices $V = \{1,\dots,n\}$ and edges $E$. That is, we can write
$P = D + A$ where $D$ is the diagonal degree matrix of $G$ and $A$ is
its adjacency matrix. We study the limit (\ref{eqn:Nlim}) using the
combinatorics of the graphs associated to the matrix $P$.

\begin{example} Let $S = (n-2)\eye_n +  \ones_n\ones_n^\top$
and $G$ be the chain graph from Example~\ref{Ex:Graphs}. 
For $n = 4$, 
\begin{equation*}
S + tP = 
\begin{pmatrix}
3+2t & 1+t & 1 & 1+t \\
1+t & 3+2t & 1+t & 1 \\
1 & 1+t & 3+2t & 1+t \\
1+t & 1 & 1+t & 3+2t
\end{pmatrix},
\end{equation*}
with inverse:
\begin{equation*}
(S + tP)^{-1} = 
\frac{1}{4(2t+3)(t+1)}
\begin{pmatrix}
t^2+5t+5 & -(t+1)^2 & t^2+t-1 & -(t+1)^2 \\
-(t+1)^2 & t^2+5t+5 & -(t+1)^2 & t^2+t-1 \\
t^2+t-1 & -(t+1)^2 & t^2+5t+5 & -(t+1)^2 \\
-(t+1)^2 & t^2+t-1 & -(t+1)^2 & t^2+5t+5
\end{pmatrix}.
\end{equation*}
Each entry of $(S+tP)^{-1}$ is a rational function of $t$, with numerator and denominator both quadratic. Thus, each entry converges to a constant as $t \to \infty$, and from the expression above, we see that the limit matrix $N$ has entries $N_{ij} = (-1)^{i+j}/8$, as predicted by the formula in Example~\ref{Ex:Graphs}.

If one adds the edge $\{1,3\}$, the corresponding matrix $(S + tP)^{-1}$ is
\begin{equation*}
(S + tP)^{-1} = \frac{1}{4(t+3)(t+1)}
\begin{pmatrix}
2t+5 & -t-1 & -1 & -t-1 \\
-t-1 & 3t+5 & -t-1 & t-1 \\ 
-1   & -t-1 & 2t+5 & -t-1 \\
-t-1 & t-1  & -t-1 & 3t+5
\end{pmatrix}.
\end{equation*}
Thus $N = 0$, as the graph is no longer bipartite (see Corollary \ref{Thm:blockconstants} below).
\qed
\end{example}

We begin with the following simple fact that allows us to invert certain classes of matrices explicitly.
 
\begin{lemma} \label{Thm:invertsimplematrix}
The following identity holds for any $\alpha \neq 0$ and $\ell \neq -\alpha/n$:
$$
(\alpha \eye_n + \ell \ones_n\ones_n^\top )^{-1} = \frac{1}{\alpha} \eye_n -
\frac{\ell}{\alpha(\alpha+\ell n)} \ones_n\ones_n^\top.
$$
\end{lemma}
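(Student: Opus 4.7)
The identity is a rank-one perturbation of a scalar multiple of the identity, so my plan is to apply the Sherman-Morrison formula with $A=\alpha\eye_n$, $u=\ell\ones_n$, $v=\ones_n$. Then $A^{-1}=\frac{1}{\alpha}\eye_n$ and the scalar denominator is $1+v^\top A^{-1}u=(\alpha+\ell n)/\alpha$, which is nonzero precisely under the hypothesis $\ell\neq -\alpha/n$. Substituting into the Sherman-Morrison expression collapses to the claimed formula after a one-line simplification.

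An equally short alternative that avoids citing Sherman-Morrison is direct verification by multiplication. The only ingredient needed is $(\ones_n\ones_n^\top)^2 = n\,\ones_n\ones_n^\top$; expanding the product of $\alpha\eye_n+\ell\ones_n\ones_n^\top$ with the right-hand side yields $\eye_n$ plus a scalar multiple of $\ones_n\ones_n^\top$, whose coefficient is then checked to vanish over the common denominator $\alpha(\alpha+\ell n)$. I would probably present this second approach in the write-up, since it is elementary and self-contained.

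There is no real obstacle here; the lemma is a bookkeeping exercise. One should simply note that both hypotheses $\alpha\neq 0$ and $\alpha+\ell n\neq 0$ are genuinely used, and each is necessary, since the eigenvalues of $\alpha\eye_n+\ell\ones_n\ones_n^\top$ are $\alpha+\ell n$ (on the span of $\ones_n$) and $\alpha$ (with multiplicity $n-1$ on $\ones_n^\perp$), so invertibility of the matrix is equivalent to both quantities being nonzero.
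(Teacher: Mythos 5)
Your proof is correct, and since the paper states this lemma without proof (it is introduced simply as a "simple fact"), there is no paper argument to compare against. Both of your routes work: the Sherman-Morrison computation with $A=\alpha\eye_n$, $u=\ell\ones_n$, $v=\ones_n$ gives exactly the right denominator $(\alpha+\ell n)/\alpha$, and the direct verification via $(\ones_n\ones_n^\top)^2=n\,\ones_n\ones_n^\top$ shows the $\ones_n\ones_n^\top$ coefficient cancels. Your closing observation about the eigenvalues $\alpha$ (multiplicity $n-1$) and $\alpha+\ell n$ (multiplicity $1$) is also a nice, and arguably the cleanest, way to see both why the two nondegeneracy hypotheses are exactly the right ones and why the inverse must again be of the form $c_1\eye_n + c_2\ones_n\ones_n^\top$; either of your write-ups would be a fine addition to the paper.
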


Given a signless Laplacian $P$, let $L \in \R^{n\times|E|}$ be the incidence matrix of the graph $G$ associated to $P$; that is, for every vertex $v \in V$ and edge $e \in E$, we have:
\begin{equation*}
L_{v,e} = 
\begin{cases}
1 \quad & \text{ if $v$ is in $e$ and $e$ is not a self-loop}, \\
\sqrt{2} & \text{ if $v$ is in $e$ and $e$ is the self-loop $(v,v)$}, \\
0 & \text{ otherwise.} 
\end{cases}
\end{equation*}
Consequently, $P = LL^\top$. Using this decomposition of $P$, we
derive the following formula for $N$.

\begin{proposition}\label{Prop:Pseudoinverse}
The limit $N$ in (\ref{eqn:Nlim}) satisfies:
\begin{equation*}
N = S^{-1} - S^{-1/2} (X^\top)^\dagger X^\top S^{-1/2},
\end{equation*}
where $X = S^{-1/2} L$ and $(X^\top)^\dagger$ is the Moore-Penrose
pseudoinverse of $X^\top$. Furthermore, $NL=0$.
\end{proposition}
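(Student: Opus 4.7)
The plan is to diagonalize $S+tP$ so as to isolate how the limit depends on the kernel of $L^\top$, then identify the resulting projection with the pseudoinverse expression. As a preliminary, $S=\alpha\eye_n+\ell\ones_n\ones_n^\top$ is symmetric positive definite (its eigenvalues are $\alpha$ and $\alpha+n\ell$, both positive), so the symmetric square root $S^{1/2}$ is well-defined and invertible. Writing $P=LL^\top$ and $X=S^{-1/2}L$, I would factor
\[
S+tP \;=\; S^{1/2}\bigl(I+tXX^\top\bigr)S^{1/2},
\qquad\text{hence}\qquad
(S+tP)^{-1} \;=\; S^{-1/2}\bigl(I+tXX^\top\bigr)^{-1}S^{-1/2},
\]
which reduces the computation of $N$ to computing $\lim_{t\to\infty}(I+tXX^\top)^{-1}$.

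For the second step, I would diagonalize the symmetric PSD matrix $XX^\top = U\Lambda U^\top$ with $\Lambda=\mathrm{diag}(\lambda_1,\dots,\lambda_n)$ and $\lambda_i\geq 0$. Then $(I+tXX^\top)^{-1} = U\,\mathrm{diag}\bigl(1/(1+t\lambda_i)\bigr)\,U^\top$, and as $t\to\infty$ the $i$th diagonal entry tends to $0$ if $\lambda_i>0$ and remains $1$ if $\lambda_i=0$. The limit is therefore the orthogonal projection onto $\ker(XX^\top)=\ker(X^\top)$, equal to $I-\Pi$, where $\Pi$ denotes the orthogonal projection onto the column space of $X$. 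Using the standard property that $A^\dagger A$ is the orthogonal projection onto the row space of $A$, applied to $A=X^\top$, I would identify $\Pi = (X^\top)^\dagger X^\top$. Substituting back gives
\[
N \;=\; S^{-1/2}\bigl(I-(X^\top)^\dagger X^\top\bigr)S^{-1/2} \;=\; S^{-1}-S^{-1/2}(X^\top)^\dagger X^\top S^{-1/2},
\]
as claimed.

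For the assertion $NL=0$, I would compute directly
\[
NL \;=\; S^{-1/2}\bigl(X - (X^\top)^\dagger X^\top X\bigr) \;=\; 0,
\]
using that $(X^\top)^\dagger X^\top$ is the projector onto $\mathrm{col}(X)$ and therefore fixes every column of $X$; equivalently, this is the pseudoinverse identity $XX^\dagger X=X$ together with the fact that $(X^\top)^\dagger X^\top = XX^\dagger$ (both are the orthogonal projection onto $\mathrm{col}(X)$).

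The main obstacle is that $X$ need not have full column rank: whenever the graph $G$ associated to $L$ contains an even cycle, the incidence matrix $L$ has a nontrivial kernel, so $X^\top X$ is singular. A naive application of the Sherman-Morrison-Woodbury identity to $(S+tLL^\top)^{-1}$ breaks down in that setting. The spectral argument above handles zero and positive eigenvalues of $XX^\top$ uniformly, which is precisely the information the Moore-Penrose pseudoinverse encodes, and is the reason $(X^\top)^\dagger$ rather than an ordinary inverse appears in the final formula.
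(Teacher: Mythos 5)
Your proof is correct, but it takes a genuinely different route from the paper's. The paper applies the Sherman--Morrison--Woodbury identity to write $(S + tLL^\top)^{-1} = S^{-1} - S^{-1}L(t^{-1}\eye + L^\top S^{-1}L)^{-1}L^\top S^{-1}$, which is valid for every $t>0$ because the $t^{-1}\eye$ term keeps the inner matrix invertible, and then invokes the standard pseudoinverse limit formula $\lim_{t\to\infty} X(t^{-1}\eye + X^\top X)^{-1} = (X^\top)^\dagger$, cited as an exercise in Horn and Johnson. You instead factor $S + tP = S^{1/2}(\eye + tXX^\top)S^{1/2}$ and compute $\lim_{t\to\infty}(\eye + tXX^\top)^{-1}$ by spectral decomposition of $XX^\top$, identifying the limit as the orthogonal projection onto $\ker(X^\top)$ and hence as $\eye - (X^\top)^\dagger X^\top$. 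Both arguments are sound; yours is more self-contained in that it derives the projection identity from the spectral picture rather than citing the limit formula, while the paper's is shorter and reuses the Woodbury machinery it deploys repeatedly throughout the section. One small correction to your final remark: a ``naive'' Sherman--Morrison--Woodbury application does not actually break down when $L$ lacks full column rank, precisely because the $t^{-1}\eye$ regularizer is retained inside the inverse, so $t^{-1}\eye + L^\top S^{-1}L$ is invertible for all finite $t>0$ regardless of $\rank L$; it is only the limit $t\to\infty$ that produces the pseudoinverse, via the same degeneration your eigenvalue argument makes explicit.
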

\begin{proof}
Using the Sherman-Morrison-Woodbury matrix formula to expand $(S + tLL^\top)^{-1}$, we calculate:
\begin{equation*}
\begin{split}
N &=\textstyle \lim_{t \to \infty} \,(S + tLL^\top)^{-1} \\
&= \textstyle \lim_{t \to \infty} \,[ \,S^{-1} - S^{-1}L(t^{-1} \eye + L^\top S^{-1}L)^{-1}L^\top S^{-1}\, ] \\
&= \textstyle S^{-1} - S^{-1/2} \, [ \, \lim_{t \to \infty} X(t^{-1} \eye + X^\top X)^{-1} \, ] \, X^\top S^{-1/2} \\
&= S^{-1} - S^{-1/2} (X^\top)^\dagger X^\top S^{-1/2},
\end{split}
\end{equation*}
where in the last step we used an elementary pseudoinverse identity in
matrix analysis~\cite[p.\ 422, Ex.\ 9]{HornJohnson}. To show that
$NL=0$, we note that $N$ is symmetric and that
\begin{equation*}
\begin{split}
(NL)^\top = L^\top N &= X^\top S^{-1/2} - X^\top (X^\top)^\dagger
X^\top S^{-1/2} = 0. \qedhere
\end{split}
\end{equation*}
\end{proof}

Let $N_1, \ldots, N_n$ denote the columns of the matrix $N$.  The following is immediate from $NL=0$.
\begin{corollary} \label{Thm:alternating columns}
For each edge $\{i,j\}$ of $G$, we have $N_i = -N_j$. In particular,
$N_i=0$ for each self-loop $\{i,i\}$.
\end{corollary}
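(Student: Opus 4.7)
The plan is to read off the corollary directly from the identity $NL = 0$ proved in Proposition~\ref{Prop:Pseudoinverse}, by unpacking what $NL = 0$ says column-by-column of the incidence matrix $L$. Each column of $L$ is indexed by an edge of $G$, so it suffices to identify the column of $L$ associated with a given edge and equate $N$ times that column with zero.

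Concretely, I would first recall the definition of $L$: for a non-loop edge $e = \{i,j\}$ with $i \neq j$, the column $L_{\cdot,e}$ equals $\basis_i + \basis_j$, while for a self-loop $e = \{i,i\}$ it equals $\sqrt{2}\,\basis_i$. Multiplying on the left by $N$ and using that $N\basis_k = N_k$, the identity $NL_{\cdot,e}=0$ yields $N_i + N_j = 0$ in the first case and $\sqrt{2}\,N_i = 0$ in the second. Rearranging gives $N_i = -N_j$ for each edge $\{i,j\}$, and $N_i = 0$ whenever $\{i,i\}$ is a self-loop of $G$.

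There is essentially no obstacle: Proposition~\ref{Prop:Pseudoinverse} has already done all the work, and the corollary is just a translation of the matrix equation $NL = 0$ into the combinatorial language of the graph $G$. The only thing to be careful about is the $\sqrt{2}$ normalization convention for self-loops in the definition of $L$, which is harmless since $\sqrt{2} \neq 0$ and the conclusion $N_i = 0$ is the same.
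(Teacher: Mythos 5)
Your proof is correct and is exactly the argument the paper intends: the paper simply remarks that the corollary ``is immediate from $NL=0$,'' and your column-by-column unpacking of that identity is the same reasoning made explicit.
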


 Suppose that the graph $G$ has connected components $G_1, \ldots, G_k$. After
 relabeling the vertices, both $P$ and $L$ are block-diagonal with
 matrices $P_1, \ldots, P_k$ and $L_1, \ldots, L_k$ along the diagonal. Furthermore, $P_i = L_iL_i^\top $ for
each $1 \leq i \leq k$. The components of $G$ also induce a block-structure on the limit $N$, and we
denote these blocks by $N[i,j]$ for $i,j \in \{1,\ldots, k\}$. These blocks
display many interesting symmetries. Firstly, the entries in each
block are all equal up to sign. Secondly, the signs in a block $N[i,j]$ depend
on the bipartite structures of the components $G_i$ and $G_j$. We say that a bipartite graph is $(p,q)$-\emph{bipartite} if the partitions are of sizes $p$ and $q$, respectively. Note that bipartite graphs cannot have self-loops.

\begin{corollary} \label{Thm:blockconstants}
In each block $N[i,j]$, the entries are equal up to sign. If
the $i$-th component $G_i$ of $G$ is not bipartite, then $N[i,j] = N[j,i] =0$ for
all $j = 1,\dots,k$.  In particular, $N =0$ if and only if all components
$G_i$ are not bipartite. Suppose $G_i$ is $(p_i,q_i)$-bipartite and $G_j$ is
$(p_j,q_j)$-bipartite. Then, after relabeling the vertices, the matrix $N[i,j]$ has the block structure
$$
N[i,j] \,\,=\,\,c_{ij}\left(\begin{array}{cc} \ones_{p_i}
    \ones_{p_j}^\top  \ &- \ones_{p_i}
    \ones_{q_j}^\top  \vspace{0.05in} \\-\ones_{q_i} \ones_{p_j}^\top &
  \ones_{q_i}\ones_{q_j}^\top \end{array}\right), \ \ \text{for some constant $c_{ij} \in \R$}.
$$
\end{corollary}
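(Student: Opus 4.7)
The plan is to combine Corollary~\ref{Thm:alternating columns} (which states $N_u = -N_v$ on each edge $\{u,v\}$ and $N_u = 0$ on each self-loop) with the connectivity of each $G_i$ and the symmetry of $N$. The basic propagation rule is that for any walk $u = w_0, w_1, \ldots, w_r = v$ in $G$, iterating the alternation relation gives $N_u = (-1)^r N_v$; hence for $u, v$ in the same component $G_i$, the two columns of $N$ are equal up to a sign determined by the parity of any walk between them.

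From this rule I would split into cases. If $G_i$ is not bipartite (equivalently, it contains an odd closed walk, or has a self-loop), then some $u \in G_i$ admits walks of both parities to itself, so $N_u = -N_u = 0$; propagation then forces $N_w = 0$ for every $w \in G_i$, and since $N$ is symmetric (as a limit of symmetric matrices $(S+tP)^{-1}$) the corresponding rows also vanish, giving $N[i,j] = N[j,i] = 0$ for all $j$. If $G_i$ is $(p_i,q_i)$-bipartite with parts $A_i, B_i$, then the parity of walks between any two vertices is well-defined, so setting $\xi_i := N_u$ for any $u \in A_i$ yields $N_v = \xi_i$ when $v \in A_i$ and $N_v = -\xi_i$ when $v \in B_i$.

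To obtain the block structure when both $G_i$ and $G_j$ are bipartite, I would pick $u \in A_i$ and $v \in A_j$ and compute $N_{uv}$ in two ways: as $(\xi_j)_u$, which is independent of the particular $v \in A_j$, and by symmetry of $N$ as $(\xi_i)_v$, which is independent of the particular $u \in A_i$. Thus $N_{uv} = c_{ij}$ is a single scalar over the $A_i \times A_j$ sub-block, and the remaining three sub-blocks of $N[i,j]$ pick up the signs $\pm c_{ij}$ dictated by the sign flips of $N_v$ and $N_u$ across $A_j \leftrightarrow B_j$ and $A_i \leftrightarrow B_i$; this is exactly the claimed matrix. Blocks where at least one of $G_i, G_j$ is non-bipartite are zero by the first case, and so trivially have all entries equal up to sign.

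Finally, for the ``$N = 0$ if and only if every component is non-bipartite'' claim, one direction is immediate from Case 1. For the converse I would appeal to Proposition~\ref{Prop:Pseudoinverse}, rewriting $N = S^{-1/2} \Pi S^{-1/2}$ where $\Pi = \eye - (X^\top)^\dagger X^\top$ is the orthogonal projection onto $\ker X^\top$. If $G_i$ is bipartite, the vector $v \in \R^n$ that is $+1$ on $A_i$, $-1$ on $B_i$, and $0$ elsewhere satisfies $L^\top v = 0$ (each non-loop edge inside $G_i$ contributes $1 + (-1) = 0$, edges outside $G_i$ see $v = 0$, and a bipartite component has no self-loops), so $S^{1/2} v$ is a nonzero element of $\ker X^\top$, whence $\Pi \neq 0$ and $N \neq 0$. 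The only mildly delicate point is the two-level sign bookkeeping in the bipartite–bipartite case, but once the propagation rule and the symmetry of $N$ are in hand this becomes routine.
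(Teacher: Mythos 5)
Your proposal is correct, and for the propagation-plus-symmetry core (iterating Corollary~\ref{Thm:alternating columns} along walks, using connectivity of each $G_i$ and symmetry of $N$ to get the sign-structure of the blocks and the vanishing for non-bipartite components) it follows exactly the argument the paper intends, which is why the paper leaves the proof implicit. The one place you deviate is the ``only if'' direction of ``$N=0$ iff all $G_i$ are non-bipartite'': you show directly that a bipartite component produces a nonzero vector in $\ker L^\top$, hence a nonzero element of $\ker X^\top$, so the projection $\Pi = \eye - (X^\top)^\dagger X^\top$ in Proposition~\ref{Prop:Pseudoinverse} is nonzero and therefore $N = S^{-1/2}\Pi S^{-1/2} \neq 0$. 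The paper instead leaves this to fall out of the later explicit computation of the constants $c_{ii}$, which are shown to be strictly positive when $G_i$ is bipartite. Both routes work; yours has the advantage of being self-contained at this point in the exposition and of not requiring the full Sherman--Morrison--Woodbury calculation of the $c_{ij}$, while the paper's route gives the sign and magnitude of each $c_{ij}$ as a bonus, which is needed anyway for Corollary~\ref{Thm:formulaforNinfinity}.
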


Now that we understand the block structure of $N$, we want to compute the constants $c_{ij}$. Our approach is to simplify the formula in Proposition
\ref{Prop:Pseudoinverse} by expressing the incidence matrix $L$ in a
more suitable form. 

\begin{proposition}
Let $G$ be a connected graph with $n$ vertices. If $G$ is not
bipartite, then $\rank L  = n$.
\end{proposition}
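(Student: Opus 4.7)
Since $L$ has $n$ rows, it suffices to show that the only $y \in \R^n$ satisfying $L^\top y = 0$ is $y = 0$; then the rows of $L$ are linearly independent and $\rank L = n$.

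The plan is to read off the constraints imposed by $L^\top y = 0$ directly from the definition of $L$. For each non-loop edge $e = \{i,j\} \in E$, the $e$-component of $L^\top y$ is $y_i + y_j$, so we get $y_i + y_j = 0$; for each self-loop $e = \{i,i\}$, the corresponding component is $\sqrt{2}\, y_i$, giving $y_i = 0$. So $L^\top y = 0$ means $y_j = -y_i$ along every ordinary edge, and $y_i = 0$ at every vertex carrying a self-loop.

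Next I would exploit the non-bipartite hypothesis to produce a vertex $v_0$ at which $y_{v_0} = 0$. If $G$ has a self-loop, this is immediate. Otherwise $G$ has no self-loops but is not bipartite, so it contains an odd cycle $v_1, v_2, \dots, v_{2k+1}, v_1$. Applying $y_{v_{i+1}} = -y_{v_i}$ around the cycle gives $y_{v_1} = (-1)^{2k+1} y_{v_1} = -y_{v_1}$, whence $y_{v_1} = 0$; take $v_0 := v_1$.

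Finally I would use connectivity to propagate this zero to the whole graph. For any vertex $w$, choose a path $v_0 = u_0, u_1, \dots, u_r = w$ in $G$; the relations $y_{u_{s+1}} = -y_{u_s}$ along successive non-loop edges yield $y_w = (-1)^r y_{v_0} = 0$ (loops along the path only reinforce the conclusion, since they force $y = 0$ at their endpoint). Hence $y \equiv 0$, so $\ker L^\top = 0$ and $\rank L = n$. The argument is entirely combinatorial; the only mild subtlety is verifying the correct constraint at a self-loop (hence the $\sqrt{2}$ normalization in the definition of $L$), which is what makes $P = LL^\top$ match the signless Laplacian exactly.
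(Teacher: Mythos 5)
Your proof is correct and follows essentially the same route as the paper: the paper shows $x^\top P x = 0 \Rightarrow x = 0$ by expanding $x^\top P x = \sum_{\{i,j\}\in E,\,i\neq j}(x_i+x_j)^2 + \sum_{\{i,i\}\in E}2x_i^2$, which yields exactly your constraints $x_i + x_j = 0$ on edges and $x_i = 0$ at self-loops, and then uses the odd cycle and connectivity just as you do. Working directly with $L^\top y = 0$ instead of the quadratic form $x^\top LL^\top x$ is a cosmetic difference only, since $x^\top P x = |L^\top x|^2$.
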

\begin{proof}
It suffices to prove that the positive semidefinite matrix $P = LL^\top
$ has rank $n$; i.e., if $x \in \R^n$ satisfies $x^\top Px = 0$,
then $x=0$. Write
\begin{equation*}
x^\top Px = \sum_{i=1}^n \sum_{j=1}^n P_{ij} x_i x_j = \sum_{\substack{i \neq j\\\{i,j\} \in E}} (x_i + x_j)^2 + \sum_{i \colon \{i,i\} \in E} 2x_i^2
\end{equation*}
so that $x^\top Px = 0$ implies
\begin{equation}\label{Eq:EdgeZero}
x_i = 0 \; \text{ if } \{i,i\} \in E \quad \text{ and } \quad x_i + x_j = 0 \; \text{ if } \{i,j\} \in E.
\end{equation}
If $G$ has a self-loop $\{i, i\}$, then $x_i = 0$
by~\eqref{Eq:EdgeZero}. If $G$ has no self-loops, there is an odd
cycle $(i_1, i_2, \dots, i_{2m+1})$ for some $m \geq 1$ (because $G$ is not bipartite). Applying condition~\eqref{Eq:EdgeZero} to each successive edge in this cycle shows $x_{i_1} = \cdots = x_{i_{2m+1}} = 0$. In either case $x_i = 0$ for some vertex $i \in V$. A repeated application of~\eqref{Eq:EdgeZero} then reveals that $x_j = 0$ for all vertices $j$ connected to $i$. Since $G$ is connected, we have $x = 0$ as desired.
\end{proof}

Let $\basis_i \in \R^{n}$ denote the $i$-th standard basis column vector.
\begin{proposition}\label{Prop:L=UA}
Let $G$ be a connected bipartite graph on $n$ vertices. Then
$\rank L = n-1$. Let $U$ be the
$n \times (n-1)$ matrix with columns $\basis_1
+ \sigma_i \basis_i$ for $2\leq i \leq n$, where $\sigma_i=-1$ if
vertex $i$ is in the same partition as vertex $1$ and $\sigma_i=1$
otherwise. Then, $L=UB$ for some $(n-1) \times |E|$ matrix $B$ of rank
$n-1$. 
\end{proposition}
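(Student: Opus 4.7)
The plan has two parts. First I would pin down $\rank L = n-1$ with matching upper and lower bounds. For the upper bound, let $V_1, V_2$ be the bipartition with $1 \in V_1$ and consider the sign vector $w \in \R^n$ defined by $w_i = 1$ for $i \in V_1$ and $w_i = -1$ for $i \in V_2$. Every edge of $G$ joins the two partitions, so $w_i + w_j = 0$ for each $\{i,j\} \in E$; hence $w^\top L = 0$, giving $\rank L \leq n-1$. For the lower bound, I would reuse the kernel argument from the preceding proposition: since a bipartite graph has no self-loops, $x^\top P x = \sum_{\{i,j\}\in E}(x_i+x_j)^2$, and $Px = 0$ forces $x_i + x_j = 0$ on every edge. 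Fixing $x_1$ and propagating through the connected graph along edges, bipartiteness then forces $x_i = x_1$ for $i \in V_1$ and $x_i = -x_1$ for $i \in V_2$. Therefore $\ker P$ is one-dimensional and $\rank L = \rank P = n-1$.

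Second, I would factor $L$ through $U$. Let $u_k := \basis_1 + \sigma_k \basis_k$ for $k = 2, \dots, n$ denote the columns of $U$. Because $G$ is bipartite, the two endpoints of any edge lie in opposite partitions. If the edge $\{i,j\}$ contains vertex $1$, say $i = 1$, then $j \in V_2$, so $\sigma_j = 1$ and the edge column $\basis_1 + \basis_j$ is exactly $u_j$. If instead $1 \notin \{i,j\}$, relabel so that $i \in V_1$ and $j \in V_2$; then $\sigma_i = -1$ and $\sigma_j = 1$, and
\begin{equation*}
u_j - u_i = (\basis_1 + \basis_j) - (\basis_1 - \basis_i) = \basis_i + \basis_j
\end{equation*}
realizes the edge column as a linear combination of two columns of $U$. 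Hence every column of $L$ lies in the column space of $U$, so we can write $L = UB$ for some matrix $B \in \R^{(n-1) \times |E|}$.

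Finally, the columns of $U$ are linearly independent (for $k \geq 2$, only $u_k$ has a nonzero entry in coordinate $k$), so $\rank U = n-1$ and the map $x \mapsto Ux$ is injective. Combining this with the previous steps gives $\rank B = \rank(UB) = \rank L = n-1$, completing the argument. The only step that takes any care is the case analysis for $u_j - u_i = \basis_i + \basis_j$: this identity relies on the signs $\sigma_i$ and $\sigma_j$ being opposite, which is exactly what the bipartiteness of $G$ guarantees.
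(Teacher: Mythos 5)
Your proof is correct, but it takes a genuinely different route from the paper's on both halves of the rank computation. For the upper bound $\rank L \leq n-1$, you exhibit the parity sign vector $w$ with $w^\top L = 0$; the paper instead gets the upper bound as a byproduct of the factorization step, writing each edge column $\basis_i+\basis_j$ in terms of the $n-1$ vectors $\basis_1 \pm \basis_k$. For the lower bound, you reuse the kernel argument from the preceding (non-bipartite) proposition applied to $P = LL^\top$, showing by edge-by-edge propagation that $\dim\ker P \leq 1$ and hence $\rank L = \rank P \geq n-1$; the paper instead constructs, for each $j$, an explicit signed telescoping sum of edge columns along a path from vertex $1$ to $j$ that equals $\basis_1 + (-1)^m\basis_j$, thereby putting the full column space of $U$ inside that of $L$. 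The paper's path argument is more economical in that it proves $\rank L \geq n-1$, $L = UB$, and $\rank L \leq n-1$ in one sweep, whereas your version cleanly separates the three claims but then needs the extra observations that $\rank L = \rank P$ and that $U$ has full column rank to conclude $\rank B = \rank(UB) = n-1$. Both are sound; one small presentational note is that your propagation argument as stated establishes $\dim\ker P \leq 1$ (which is all you need), and to claim $\ker P$ is exactly one-dimensional you would want to add that $Pw = LL^\top w = 0$ since $w^\top L = 0$.
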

\begin{proof}
Recall that the columns of $L$ are of the form $\basis_i + \basis_j$ where
$\{i,j\} \in E$ (there are no self-loops because $G$ is bipartite). There
is a path $(1, i_1, i_2, \cdots, i_m, j)$ from vertex $1$ to vertex $j$ for
each $2 \leq j \leq n$, so
$$
\basis_1 + (-1)^m\basis_j =  (\basis_1+\basis_{i_1}) - (\basis_{i_1} +
\basis_{i_2}) + \cdots + (-1)^m (\basis_{i_m} +\basis_j).
$$
Thus, $\rank L \geq n-1$. Conversely, for each edge $\{i,j\}$ where vertex $i$ is in the
same partition as vertex $1$, 
$$\basis_i + \basis_j =  - (\basis_1 - \basis_i) + (\basis_1 + \basis_j),$$
so $\rank L \leq n-1$. This equation also allows us to write
$L=UB$ for some matrix $B$, and the rank condition on $B$ follows from that of $L$.
\end{proof}

Recall that $G$ has $k$ components $G_1, \ldots, G_k$ and $L$ is block-diagonal with matrices $L_1, \ldots, L_k$. If $G_i$ is bipartite, we write $L_i =  U_i B_i$ as in Proposition~\ref{Prop:L=UA}. If $G_i$ is not bipartite, we write $L_i = U_i B_i$ where $U_i = \eye$ is the identity matrix and $B_i = L_i$. Let $r$ be the number of components of $G$ which are bipartite. If $U$ and $B$ are block-diagonal matrices constructed from $U_1, \ldots,U_k$ and $B_1, \ldots, B_k$, then $L = UB$ where $U \in \R^{n{\times}(n-r)}$ and $B \in \R^{(n-r){\times}|E|}$ both have rank $(n-r)$. Note that $U$ contains information about
the sizes of the bipartitions of each component whereas $B$ contains information about the edges. Let $U_{(p,q)}$ denote the matrix 
$$
U_{(p,q)} := \left(\begin{array}{cc} \vspace{0.05in} \ones_{p-1}^\top  & \ones_q^\top  \\
\vspace{0.05in}-\eye_{p-1} & 0 \\
0 & \eye_{q}
\end{array} \right).
$$
After relabeling the vertices, we have
\begin{equation}\label{bigUeqn}
U = \left( \begin{array}{ccccc} 
U_{(p_1,q_1)} & 0 & \cdots & 0 & 0\\
0 & U_{(p_2,q_2)} & \cdots & 0 & 0\\
\vdots& \vdots &  & \vdots & \vdots\\
0 & 0 & \cdots & U_{(p_r,q_r)} & 0\\
0 & 0 & \cdots & 0 & \eye_s
\end{array} \right) = 
\left( \begin{array}{cc} 
\widetilde{U} & 0 \\
0 & \eye_s
\end{array} \right),
\end{equation}
where $s =  n - \sum_{i=1}^r (p_i+q_i)$ is the total number of
vertices in the non-bipartite components of $G$. Our next result shows that dependence on the matrix $B$ can be removed in Proposition
\ref{Prop:Pseudoinverse}.  This new formula for $N$ also gives us a method to compute its entries explicitly. 

\begin{proposition} \label{Thm:newNformula}
Let $U$ be as in (\ref{bigUeqn}).  The limit $N$ in (\ref{eqn:Nlim}) satisfies:
$$N =  S^{-1} - S^{-1}U(U^\top S^{-1}U)^{-1}U^\top  S^{-1}. $$
Therefore, $N$ depends only on the sizes of the bipartitions of each component $G_i$.
\end{proposition}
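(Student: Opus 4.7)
My plan is to start from the formula for $N$ given in Proposition~\ref{Prop:Pseudoinverse}, namely
$$N = S^{-1} - S^{-1/2}(X^\top)^\dagger X^\top S^{-1/2}, \qquad X = S^{-1/2}L,$$
and reduce it by exploiting the factorization $L = UB$ furnished by (\ref{bigUeqn}) together with Proposition~\ref{Prop:L=UA}. Writing $Y := S^{-1/2}U$, we obtain $X = YB$. The point is that the middle factor $S^{-1/2}(X^\top)^\dagger X^\top S^{-1/2}$ can be recognized as a conjugate of the orthogonal projection onto the column space of $X$, and this column space only depends on $Y$ (not on $B$).

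The key step is the identification $(X^\top)^\dagger X^\top = P_{\mathrm{col}(X)}$, the orthogonal projector onto the column space of $X$. This is the standard fact that for any real matrix $M$, the product $M^\dagger M$ is the orthogonal projector onto the row space of $M$; applied to $M = X^\top$ this yields a projector onto $\mathrm{row}(X^\top) = \mathrm{col}(X)$. Next, because $B \in \R^{(n-r)\times |E|}$ has full row rank $n-r$, it is surjective onto $\R^{n-r}$, so $\mathrm{col}(X) = \mathrm{col}(YB) = \mathrm{col}(Y)$. Since $S^{-1/2}$ is invertible and $U$ has full column rank $n-r$, the matrix $Y$ also has full column rank, so $Y^\top Y$ is invertible and the projector admits the closed form $P_{\mathrm{col}(Y)} = Y(Y^\top Y)^{-1}Y^\top$.

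Substituting back and unpacking $Y = S^{-1/2}U$ gives
$$S^{-1/2}(X^\top)^\dagger X^\top S^{-1/2} = S^{-1/2}\bigl[S^{-1/2}U(U^\top S^{-1}U)^{-1}U^\top S^{-1/2}\bigr]S^{-1/2} = S^{-1}U(U^\top S^{-1}U)^{-1}U^\top S^{-1},$$
which combined with the formula from Proposition~\ref{Prop:Pseudoinverse} yields the desired expression for $N$. Finally, the dependence claim follows by inspection of (\ref{bigUeqn}): $S = \alpha I_n + \ell \ones_n\ones_n^\top$ is determined by $n$, and the block form of $U$ is completely determined by the bipartition sizes $(p_i,q_i)$ of the bipartite components together with the number $s$ of vertices in non-bipartite components; in particular the edge-level data encoded in $B$ has dropped out.

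The only delicate point is the pseudoinverse identity in the second paragraph — one has to be careful that $M^\dagger M$ projects onto the row space (not the column space) of $M$, and then apply this with $M = X^\top$ to land on $\mathrm{col}(X)$. Everything else is routine linear algebra once the factorization $X = YB$ with $B$ of full row rank is in hand.
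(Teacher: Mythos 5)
Your proof is correct, but it follows a genuinely different route than the paper's. The paper does \emph{not} invoke Proposition~\ref{Prop:Pseudoinverse} here; instead it re-runs the Sherman--Morrison--Woodbury expansion from scratch using the decomposition $P = U(BB^\top U^\top)$, shows that $BB^\top U^\top S^{-1}U$ has strictly positive eigenvalues (as a product of two positive-definite matrices, citing~\cite{HJ02}), and thereby justifies letting $t^{-1}\eye$ vanish in $(t^{-1}\eye + BB^\top U^\top S^{-1}U)^{-1}BB^\top \to (U^\top S^{-1}U)^{-1}$. You instead reuse the already-established pseudoinverse formula, recognize $(X^\top)^\dagger X^\top$ as the orthogonal projector onto $\mathrm{col}(X)$, note that $\mathrm{col}(X) = \mathrm{col}(S^{-1/2}U)$ because $B$ has full row rank, and then write the projector in closed form since $S^{-1/2}U$ has full column rank. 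Both arguments ultimately rest on the same rank facts ($\rank B = \rank U = n-r$), but yours buys a cleaner, coordinate-free derivation that avoids redoing the Woodbury limit and the eigenvalue-of-a-product lemma; the paper's buys a self-contained computation that does not depend on the earlier pseudoinverse identity. One small caution worth keeping in mind: you need $S^{-1/2}U$ to have full column rank to write the projector as $Y(Y^\top Y)^{-1}Y^\top$, which you correctly justify via invertibility of $S^{-1/2}$ and injectivity of $U$ --- this is precisely the step where the paper instead argues positive-definiteness of $U^\top S^{-1}U$, so the two proofs are doing the same bookkeeping in different languages.
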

\begin{proof}
Write $L = UB$ and $P = U(BB^\top U^\top )$. First, note that $BB^\top $ is positive definite
since $B \in \R^{(n-r){\times}|E|}$ has rank $(n-r)$. Now, $U^\top S^{-1}U$ is positive semidefinite, being a congruence of a positive definite matrix $S^{-1}$ (Sylvester's Law of Inertia). Moreover, since $U$ is injective as a linear map and $S^{-1} \succ 0$, for any $x \in \R^{n-r}$:
$$x^\top U^\top S^{-1}Ux = 0 \quad \Rightarrow \quad Ux = 0 \quad \Rightarrow
\quad x = 0.$$
Thus, $U^\top S^{-1}U$ is positive definite. Then the
eigenvalues of $BB^\top U^\top S^{-1}U$ are all positive, being a product of two positive definite matrices (see \cite[Lemma 2]{HJ02}). Therefore, the matrix $t^{-1} \eye + BB^\top U^\top S^{-1}U$ is invertible for all $t > 0$, and so by the Sherman-Morrison-Woodbury formula, we have
\begin{equation*}
\begin{split}
(S + tP)^{-1} &= (S + tU(BB^\top U^\top))^{-1} \\
&= S^{-1} - S^{-1}U(t^{-1}\eye + BB^\top U^\top S^{-1}U)^{-1}BB^\top U^\top S^{-1}.
\end{split}
\end{equation*}
Taking limits of this equation as $t \rightarrow \infty$, the result follows.
\end{proof}

To state our ultimate formula for $N$, let us first define:
$$
\gamma = \sum_{i=1}^r \frac{(p_i-q_i)^2}{p_i+q_i},
$$
$$
y^\top = \left( \frac{p_1-q_1}{p_1+q_1} (\ones_{p_1}^\top ,
  -\ones_{q_1}^\top ), \; \ldots, \; \frac{p_r-q_r}{p_r+q_r} (\ones_{p_r}^\top ,
  -\ones_{q_r}^\top ), \; 0 \cdot \ones_s^\top  \right),
$$
\medskip
$$
Y= \left(\begin{array}{cccc}
\displaystyle\frac{1}{p_1+q_1} \left(\begin{array}{cc}
\vspace{0.05in} \ones_{p_1} \ones_{p_1}^\top  & -\ones_{p_1}\ones_{q_1}^\top \\
- \ones_{q_1} \ones_{p_1}^\top  & \ones_{q_1}\ones_{q_1}^\top 
\end{array}\right) & \cdots & 0 & 0 \\
\vdots &  &\vdots & \vdots \\
0 & \cdots & \displaystyle\frac{1}{p_r+q_r} \left(\begin{array}{cc}
\vspace{0.05in} \ones_{p_r} \ones_{p_r}^\top  & -\ones_{p_r}\ones_{q_r}^\top \\
 -\ones_{q_r} \ones_{p_r}^\top  & \ones_{q_r}\ones_{q_r}^\top 
\end{array}\right) & 0 \\
0 & \cdots & 0 & 0 \cdot \eye_s
\end{array}\right).
\medskip
$$
\begin{proposition}
Set $\gamma$, $y$, and $Y$ as above, which depend only on the bipartite
structures of the components of the underlying graph $G$. We have the following formula for the limit in (\ref{eqn:Nlim}):
$$
N = \lim_{t \rightarrow \infty} (S+tP)^{-1} = \frac{1}{\alpha}Y - \frac{\ell}{\alpha(\alpha+\ell \gamma)}yy^\top.
$$
\end{proposition}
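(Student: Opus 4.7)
The plan is to derive $N$ directly from the formula $N = S^{-1} - S^{-1} U (U^\top S^{-1} U)^{-1} U^\top S^{-1}$ of Proposition~\ref{Thm:newNformula}, combined with $S^{-1} = \frac{1}{\alpha}\eye - \frac{\ell}{\alpha(\alpha + \ell n)}\ones\ones^\top$ from Lemma~\ref{Thm:invertsimplematrix}. The key organizing observation is that for each bipartite component $G_i$, the sign vector $v_i \in \R^n$ supported on $G_i$ (with $+1$ on its $p_i$-partition and $-1$ on its $q_i$-partition) satisfies $U^\top v_i = 0$: each column of $U_{(p_i,q_i)}$ has the form $\basis_1 + \sigma_j \basis_j$ with $\sigma_j$ chosen precisely to make $v_i^\top (\basis_1 + \sigma_j \basis_j) = 0$. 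Since the $v_i$ are supported on disjoint components, they form an orthogonal basis of $\ker U^\top$, which has dimension $r$. Moreover, for every vertex $j$ in a non-bipartite component the standard basis vector $\basis_j$ is itself a column of $U$ (the corresponding block is $\eye_s$), so $N \basis_j = 0$, and by symmetry $N$ vanishes on all non-bipartite rows and columns. Consequently $N = \sum_{k,l=1}^{r} c_{kl}\, v_k v_l^\top$, and it remains to determine the coefficients $c_{kl}$ via the bilinear forms $v_k^\top N v_l = (p_k + q_k)(p_l + q_l)\, c_{kl}$.

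To evaluate $v_k^\top N v_l$, use $\ones^\top v_l = p_l - q_l$ and $U^\top v_l = 0$ to compute
\[
S^{-1} v_l = \frac{v_l}{\alpha} - \frac{\ell(p_l - q_l)}{\alpha(\alpha + \ell n)}\ones, \qquad U^\top S^{-1} v_l = -\frac{\ell(p_l - q_l)}{\alpha(\alpha + \ell n)}\, u, \quad u := U^\top \ones.
\]
The direct term gives $v_k^\top S^{-1} v_l = \frac{(p_k+q_k)\delta_{kl}}{\alpha} - \frac{\ell(p_k-q_k)(p_l-q_l)}{\alpha(\alpha+\ell n)}$, and the cross term reduces to $\frac{\ell^2(p_k-q_k)(p_l-q_l)}{\alpha^2(\alpha+\ell n)^2}\cdot u^\top (U^\top S^{-1} U)^{-1} u$. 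Writing $U^\top S^{-1} U = \frac{1}{\alpha} U^\top U - \frac{\ell}{\alpha(\alpha + \ell n)}\, u u^\top$ and applying the Sherman--Morrison formula further reduces the entire computation to evaluating the single scalar $u^\top (U^\top U)^{-1} u$.

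The last step is a block-by-block calculation. For each bipartite block, a direct inspection gives $U_{(p_i, q_i)}^\top U_{(p_i, q_i)} = \eye + \ones\ones^\top$ of size $p_i + q_i - 1$, with inverse $\eye - \frac{1}{p_i + q_i}\ones\ones^\top$, while $U_{(p_i, q_i)}^\top \ones$ consists of $p_i - 1$ zeros followed by $q_i$ twos. The resulting contribution is $\frac{4 p_i q_i}{p_i + q_i} = (p_i + q_i) - \frac{(p_i - q_i)^2}{p_i + q_i}$, and the non-bipartite blocks contribute $s$. Summing telescopes to the identity $u^\top (U^\top U)^{-1} u = n - \gamma$. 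Substituting back, the factor $1 + \frac{\ell(n-\gamma)}{\alpha + \ell \gamma} = \frac{\alpha + \ell n}{\alpha + \ell \gamma}$ collapses to give
\[
v_k^\top N v_l = \frac{(p_k + q_k)\delta_{kl}}{\alpha} - \frac{\ell(p_k - q_k)(p_l - q_l)}{\alpha(\alpha + \ell \gamma)},
\]
which, after matching against $v_k^\top Y v_l = (p_k + q_k)\delta_{kl}$ and $v_k^\top y = p_k - q_k$, recovers the claimed formula. The main obstacle is the arithmetic consolidation, and in particular the clean block identity $u^\top (U^\top U)^{-1} u = n - \gamma$: this is precisely the combinatorial input that converts the $\alpha + \ell n$ factor inherited from $S^{-1}$ into the $\alpha + \ell \gamma$ factor appearing in the final expression.
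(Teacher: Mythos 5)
Your proof is correct, and while it starts from the same launching point as the paper (the formula of Proposition~\ref{Thm:newNformula} together with Sherman--Morrison applied to $U^\top S^{-1} U$), you organize the algebra differently and, arguably, more cleanly. The paper computes the full matrix $Z = U(U^\top U)^{-1}U^\top$, the vector $z = Z\ones$, and the scalar $\varsigma = \ones^\top Z \ones$, then verifies the matrix identities $Z = \eye_n - Y$ and $z = \ones_n - y$ term by term before substituting. You instead exploit the structural observation that $\mathrm{range}(N) = \ker U^\top = \mathrm{span}\{v_1,\dots,v_r\}$, which lets you pin down the symmetric rank-$r$ matrix $N$ by computing only the $r^2$ scalars $v_k^\top N v_l$. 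Because $U^\top v_l = 0$, the vectors $v_l$ pass through $S^{-1}$ with only a rank-one correction proportional to $\ones$, and the entire Sherman--Morrison cascade collapses to the single scalar $\varsigma = u^\top(U^\top U)^{-1}u = n - \gamma$, which you evaluate block by block exactly as the paper does. The final matching against $v_k^\top Y v_l = (p_k+q_k)\delta_{kl}$ and $v_k^\top y = p_k - q_k$ is justified because $Y$ and $yy^\top$ are themselves supported on $\mathrm{span}\{v_1,\dots,v_r\}$ and the $v_i$ are linearly independent. The trade-off: the paper's version directly exhibits $Z$ and $z$, which may be reusable elsewhere, while your version avoids ever writing down those matrices and makes it transparent why the answer depends only on $\gamma$, $y$, $Y$.
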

\begin{proof} We outline the computation of $N$. For simplicity, let us write $S^{-1} = a\eye_n - b\ones_{n} \ones_{n}^\top $. Then, 
\begin{align}\label{Eq:UUSUU}
\nonumber U(U^\top S^{-1}U)^{-1}U^\top  &= U(U^\top (a\eye_n -
b\ones_{n}\ones_{n}^\top )U)^{-1}U^\top  \\
&= U(aW- bvv^\top  )^{-1} U^\top,
\end{align}
where $W = U^\top U$ and $v = U^\top \ones_n$. By the Sherman-Morrison-Woodbury identity, we have that
\begin{align*}
(aW- bvv^\top  )^{-1} &= a^{-1}W^{-1} -
\frac{a^{-1}W^{-1}(-bv)v^\top a^{-1}W^{-1}}{1+v^\top a^{-1}W^{-1}(-bv)}\\
&=\frac{1}{a}W^{-1} + \frac{b}{a(a-b\varsigma)}W^{-1}vv^\top W^{-1},
\end{align*}
where $\varsigma = v^\top W^{-1}v$ (it is easy to show that $a-b\varsigma > 0$). Substituting this back into
\eqref{Eq:UUSUU} gives us
\begin{align} \label{Eq:UUSUUsimplified}
U(U^\top S^{-1}U)^{-1}U^\top  &= \frac{1}{a} Z +\frac{b}{a(a-b\varsigma)} zz^\top,
\end{align}
where $Z = U(U^\top U)^{-1} U^\top$, $z =  Z\ones_n$, and $\varsigma =
\ones_n^\top  Z\ones_n$. From the block-diagonal structure of $U$, the matrix $Z$ is also block-diagonal
with blocks 
\begin{align*}
U_i(U_i^\top U_i)^{-1}U_i^\top  &=\eye_{p_i+q_i} - \frac{1}{p_i+q_i}
\begin{pmatrix}
\vspace{0.05in} \ones_{p_i} \ones_{p_i}^\top  & -\ones_{p_i}\ones_{q_i}^\top \\
- \ones_{q_i} \ones_{p_i}^\top  & \ones_{q_i}\ones_{q_i}^\top 
\end{pmatrix},
\ \ \text{ for } i = 1,\dots,r,
\end{align*}
where we have computed $(U_i^\top U_i)^{-1}$ using Lemma
\ref{Thm:invertsimplematrix}. Next, one shows that $Z = \eye_n - Y$, $z = \ones_n-y$, and
$\varsigma = n-\gamma$. Finally, substituting
\begin{align}\label{eq:formulaforSinverse}
  S^{-1} = a\eye_n - b\ones_{n}\ones_{n}^\top, \quad a =
  \frac{1}{\alpha}, \quad b = \frac{\ell}{\alpha(\alpha + \ell n)},  
\end{align}
and equation \eqref{Eq:UUSUUsimplified} into Proposition \ref{Thm:newNformula} gives the desired result.
\end{proof}

\begin{corollary}
For $1 \leq i,j \leq r$, the constants $c_{ij}$ in the blocks $N[i,j]$ in Corollary \ref{Thm:blockconstants} are:
$$
c_{ii} = \frac{\ell}{\alpha(\alpha+\ell\gamma)(p_i+q_i)}
\left(\frac{\alpha}{\ell}+\gamma-\frac{(p_i-q_i)^2}{p_i+q_i}\right),
$$
$$
c_{ij} =
\frac{-\ell}{\alpha(\alpha+\ell\gamma)}\left(\frac{p_i-q_i}{p_i+q_i}\right)
\left(\frac{p_j-q_j}{p_j+q_j}\right), \quad j \neq i.
$$
\end{corollary}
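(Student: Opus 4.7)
The plan is to extract the block constants directly from the explicit formula $N = \frac{1}{\alpha}Y - \frac{\ell}{\alpha(\alpha+\ell\gamma)}yy^\top$ of the preceding proposition, reading off the appropriate blocks of $Y$ and $yy^\top$ and matching them against the sign pattern established in Corollary \ref{Thm:blockconstants}. The previous proposition did the heavy lifting; this corollary is essentially bookkeeping.

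First I would handle the off-diagonal case $i \neq j$ with $i, j \leq r$. Since $Y$ is block diagonal, $Y[i,j] = 0$, so only the rank-one piece $yy^\top$ contributes to $N[i,j]$. The $i$-th segment of $y$ is $\frac{p_i - q_i}{p_i + q_i}(\ones_{p_i}^\top, -\ones_{q_i}^\top)^\top$, so the $(i,j)$-block of $yy^\top$ equals
$$\frac{(p_i-q_i)(p_j-q_j)}{(p_i+q_i)(p_j+q_j)}\begin{pmatrix}\ones_{p_i}\\-\ones_{q_i}\end{pmatrix}\begin{pmatrix}\ones_{p_j}^\top & -\ones_{q_j}^\top\end{pmatrix}.$$
Multiplying by the scalar $-\ell/(\alpha(\alpha+\ell\gamma))$ and comparing with the block form in Corollary \ref{Thm:blockconstants} gives the claimed expression for $c_{ij}$.

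For the diagonal block $N[i,i]$, both terms contribute. The $(i,i)$-block of $Y$ is $\frac{1}{p_i+q_i}$ times the same sign pattern, contributing $\frac{1}{\alpha(p_i+q_i)}$ to $c_{ii}$. The $(i,i)$-block of $yy^\top$ is $\bigl(\frac{p_i-q_i}{p_i+q_i}\bigr)^2$ times the sign pattern, contributing $-\frac{\ell(p_i-q_i)^2}{\alpha(\alpha+\ell\gamma)(p_i+q_i)^2}$. Adding these yields
$$c_{ii} = \frac{1}{\alpha(p_i+q_i)} - \frac{\ell(p_i-q_i)^2}{\alpha(\alpha+\ell\gamma)(p_i+q_i)^2}.$$

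All that remains is an elementary algebraic rearrangement: I would factor out $\frac{\ell}{\alpha(\alpha+\ell\gamma)(p_i+q_i)}$ from the right-hand side and use the identity $\frac{\alpha+\ell\gamma}{\ell} = \frac{\alpha}{\ell}+\gamma$ to obtain the stated form. There is no genuine obstacle here; the only thing to be careful about is keeping track of the signs in the two terms of the diagonal case and verifying that both pieces share the same sign-structure block so that a single scalar $c_{ii}$ suffices, which follows immediately from the factorization of both $Y[i,i]$ and the $(i,i)$-block of $yy^\top$ as a scalar multiple of the block $\bigl(\begin{smallmatrix}\ones\ones^\top & -\ones\ones^\top\\ -\ones\ones^\top & \ones\ones^\top\end{smallmatrix}\bigr)$.
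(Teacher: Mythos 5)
Your computation is correct and is precisely the bookkeeping the paper intends: the corollary follows by reading off the $(i,j)$-blocks of $\frac{1}{\alpha}Y - \frac{\ell}{\alpha(\alpha+\ell\gamma)}yy^\top$ and matching them to the sign pattern of Corollary~\ref{Thm:blockconstants}, just as you do. The paper omits the proof entirely because it is exactly this direct substitution, so there is nothing to compare beyond noting that your algebra (in particular the factoring step using $\frac{\alpha+\ell\gamma}{\ell}=\frac{\alpha}{\ell}+\gamma$) checks out.
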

\medskip
Finally, we write down an explicit formula for $\| N \|_\infty$ and verify that $\| N \|_\infty
\leq \| S^{-1} \|_\infty$.

\begin{corollary}\label{Thm:formulaforNinfinity} If $r=0$, then
  $\|N\|_\infty=0$. If $r \geq 1$, let $d =  \sum_{i=1}^r |p_i-q_i|$. Then
$$
\| N \|_\infty \,\,=\,\, \frac{1}{\alpha} +
\frac{\ell}{\alpha(\alpha + \ell\gamma)}  \max_{1\leq i \leq r}\frac{|p_i-q_i|(d-2|p_i-q_i|)}{p_i+q_i} .
$$
\end{corollary}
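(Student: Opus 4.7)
The plan is to read off the row sums of $N$ directly from the formula $N = \tfrac{1}{\alpha}Y - \tfrac{\ell}{\alpha(\alpha+\ell\gamma)}yy^\top$ established in the previous proposition, exploiting the explicit block structure of $Y$ and $y$. Since both $Y$ and $y$ vanish on coordinates corresponding to vertices in non-bipartite components, every such row of $N$ is zero; in particular, if $r=0$ then $N = 0$ and $\|N\|_\infty = 0$.

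So assume $r \geq 1$ and fix a vertex $k$ lying in the $i$-th bipartite component; by symmetry we may take $k$ in the $p_i$-sized partition. I would compute $N_{k,j}$ by partitioning the index $j$ into four cases: (i) $j$ in the same partition as $k$, (ii) $j$ in the other partition of component $i$, (iii) $j$ in some other bipartite component $i' \neq i$, (iv) $j$ in a non-bipartite component. Using the block forms of $Y$ and $y$, each $|N_{k,j}|$ comes out as either $\tfrac{1}{\alpha(p_i+q_i)} - \tfrac{\ell(p_i-q_i)^2}{\alpha(\alpha+\ell\gamma)(p_i+q_i)^2}$ (cases (i),(ii)), or $\tfrac{\ell|p_i-q_i||p_{i'}-q_{i'}|}{\alpha(\alpha+\ell\gamma)(p_i+q_i)(p_{i'}+q_{i'})}$ (case (iii)), or $0$ (case (iv)). A small but important point to check here is that within component $i$ the two contributions from $Y$ and from $yy^\top$ have opposite signs that match in absolute value, so one may strip the absolute values as written; the nonnegativity of the first expression follows from the bound $\gamma \geq (p_i-q_i)^2/(p_i+q_i)$, which gives $(\alpha+\ell\gamma)(p_i+q_i) \geq \ell(p_i-q_i)^2$.

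Summing $|N_{k,j}|$ over the $p_i+q_i$ indices in component $i$ contributes $\tfrac{1}{\alpha} - \tfrac{\ell(p_i-q_i)^2}{\alpha(\alpha+\ell\gamma)(p_i+q_i)}$, while summing over each other bipartite component $i'$ contributes $\tfrac{\ell|p_i-q_i||p_{i'}-q_{i'}|}{\alpha(\alpha+\ell\gamma)(p_i+q_i)}$. Combining all contributions and using $d = \sum_{i'=1}^r |p_{i'}-q_{i'}|$ collapses the row sum to
\begin{equation*}
|N_k|_1 \,=\, \frac{1}{\alpha} + \frac{\ell}{\alpha(\alpha+\ell\gamma)} \cdot \frac{|p_i-q_i|\bigl(d - 2|p_i-q_i|\bigr)}{p_i+q_i}.
\end{equation*}
Since this value depends only on the component $i$ containing $k$ (and not on which of the two partitions $k$ lies in), taking the maximum over all vertices reduces to the maximum over $1 \leq i \leq r$, producing the stated formula.

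The main obstacles I anticipate are bookkeeping rather than conceptual: keeping the sign patterns of $Y$ and $yy^\top$ straight across the four cases of $j$, and verifying the telescoping of the within-component and cross-component sums. I would also need to briefly confirm that the maximum over bipartite rows dominates the zero rows from non-bipartite components, which holds because each row sum of $N$ is automatically nonnegative (as a sum of absolute values), so adjoining the zero rows does not change the maximum.
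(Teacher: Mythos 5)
Your proposal is correct and follows essentially the same route as the paper: the quantities you compute for cases (i)--(iii) are exactly the paper's block constants $c_{ii}$ and $c_{ij}$, your nonnegativity check via $\gamma \geq (p_i-q_i)^2/(p_i+q_i)$ is precisely the paper's observation that $c_{ii} > 0$ and $c_{ij} \leq 0$, and the final telescoping to $d$ matches equation (4.2). The only difference is presentational: you read off the entries directly from the $Y$ and $yy^\top$ formula rather than quoting the $c_{ij}$ from the intervening corollary.
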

\begin{proof} Recall that if $r = 0$ (i.e.\ no component of $G$ is
  bipartite), then $N = 0$ by Corollary~\ref{Thm:blockconstants}. Now
  if $r \geq 1$, we may assume that $p_i \geq q_i$ for
  all $i$ after a relabeling of vertices. Observe that $c_{ii} > 0$ and $c_{ij} \leq 0$ for
  all $i$ and $j \neq i$. Indeed, this follows from $\alpha > 0$,
  $\ell > 0$, and 
$$
\gamma = \sum_{i=1}^r
    \frac{(p_j-q_j)^2}{p_j+q_j} \geq 0, \quad 
    \gamma-\frac{(p_i-q_i)^2}{p_i+q_i} = \sum_{j\neq i} \frac{(p_j-q_j)^2}{p_j+q_j} \geq 0.
$$
Consequently, the $1$-norm of rows in the
$i$-th block of $N$ is
\begin{align}\label{Eq:OneNormRow}
(p_i+q_i)c_{ii} - \sum_{j\neq i} (p_j+q_j) c_{ij} &\,\,=\,\, \frac{1}{\alpha} +
\frac{\ell (p_i-q_i)(d-2p_i+2q_i)}{\alpha(\alpha +  \ell\gamma)(p_i+q_i)},
\end{align}
and $\| N \|_\infty $ is the maximum of these $1$-norms.
\end{proof}

\begin{corollary}\label{Thm:limitatinfinity}  For all signless Laplacians $P$, we have
$$
\| N \|_{\infty} = \| \lim_{t \rightarrow \infty} (S+tP)^{-1} \|_\infty \leq \| S^{-1} \|_\infty = \frac{1}{\alpha} +
\frac{\ell(n-2)}{\alpha(\alpha +  \ell n)} ,
$$
with equality if and only if $P$ is the zero matrix.
\end{corollary}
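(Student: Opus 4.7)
The plan is to use the explicit formula from Corollary \ref{Thm:formulaforNinfinity} and compare it directly with the given expression for $\|S^{-1}\|_\infty$. If $r = 0$ there are no bipartite components, so Corollary \ref{Thm:formulaforNinfinity} gives $\|N\|_\infty = 0$, strictly less than $\|S^{-1}\|_\infty > 0$. So I will assume $r \geq 1$ and write $b_i = |p_i - q_i|$, $s_i = p_i + q_i$. Subtracting the two formulae and clearing positive denominators, $\|N\|_\infty \leq \|S^{-1}\|_\infty$ is equivalent to
\[
\alpha\bigl[(n-2) - M\bigr] + \ell\bigl[(n-2)\gamma - nM\bigr] \;\geq\; 0, \qquad M \;:=\; \max_{1 \leq i \leq r} \frac{b_i(d - 2b_i)}{s_i}.
\]

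First I would establish $M \leq n-2$, with equality exactly when $P = 0$. The key structural fact is that every connected bipartite component with at least two vertices has both parts of its bipartition nonempty, so $b_i \leq s_i - 2$ whenever $s_i \geq 2$; combined with $d \leq n$ this gives $b_i(d - 2b_i)/s_i \leq n - 2$, and the case $s_i = 1$ forces $b_i = 1$ so that $b_i(d - 2b_i)/s_i = d - 2 \leq n - 2$ as well. Tracing through the inequalities shows $M = n - 2$ requires every component of $G$ to be an isolated vertex, i.e.\ $P = 0$.

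Next I use the standing hypothesis $\alpha \geq (n-2)\ell$, equivalent to diagonal dominance of $S$. Since the coefficient $(n-2) - M$ of $\alpha$ is nonnegative, the displayed expression is smallest at $\alpha = (n-2)\ell$, where it becomes $\ell\bigl[(n-2)(n-2+\gamma) - 2(n-1)M\bigr]$. The task therefore reduces to proving, for every index $i$, the combinatorial inequality
\[
(n-2)\, s_i (n - 2 + \gamma) \;\geq\; 2(n-1)\, b_i(d - 2b_i). \qquad (\star)
\]

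The main obstacle is establishing $(\star)$. I would apply the Cauchy--Schwarz inequality $\sum_{j\neq i} b_j^2/s_j \geq \bigl(\sum_{j\neq i} b_j\bigr)^2/\sum_{j\neq i} s_j \geq (d - b_i)^2/(n - s_i)$, where the second step uses $\sum_{j\neq i} s_j \leq n - s_i$. Substituting this lower bound on $\gamma$ turns $(\star)$ into showing nonnegativity on $V \in [0, n - s_i]$ of the upward parabola
\[
Q(V) \;=\; (n-2)\, s_i\, V^2 - 2(n-1)(n-s_i)\, b_i\, V + (n-s_i)\bigl[(n-2)^2 s_i + (3n-4)\, b_i^2\bigr],
\]
where $V = d - b_i$. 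Since $Q(0) > 0$, everything reduces to the location of the vertex of $Q$. A case split finishes the job: for $s_i = 1$ (forced $b_i = 1$), the vertex lies at $V^\ast = (n-1)^2/(n-2) > n - 1$, so $Q$ is monotone decreasing on $[0, n-1]$ and a direct calculation gives $Q(n-1) = 0$; for $s_i \geq 2$, the bound $b_i \leq s_i - 2$ forces the discriminant of $Q$ (which equals $4(n-s_i)[b_i^2(n(n-1)^2 - s_i(2n-3)^2) - (n-2)^3 s_i^2]$) to be nonpositive, a claim that reduces to the polynomial inequality $(s_i - 2)^2[n(n-1)^2 - s_i(2n-3)^2] \leq (n-2)^3 s_i^2$ for integers $s_i \geq 2$ and $n \geq 3$, verified by direct polynomial computation (checking, e.g., the endpoints $s_i \in \{2, n\}$ and the interior critical point where $n(n-1)^2 = s_i(2n-3)^2$). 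Finally, tracing equality back through every step forces $s_i = 1$, $b_i = 1$, $V = n - 1$, and $b_j/s_j = 1$ for all $j \neq i$ (the last from tight Cauchy--Schwarz together with $\sum_{j\neq i} s_j = n - s_i$), which means every component of $G$ is an isolated vertex, i.e.\ $P = 0$.
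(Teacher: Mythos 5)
Your proposal is correct but takes a genuinely different route from the paper. Both start from Corollary~\ref{Thm:formulaforNinfinity} and reduce to a scalar inequality; from there the paper establishes, for each $i$ (assuming $p_i \geq q_i$), the chain
\[
(\alpha+\ell n)(d-2p_i+2q_i)\frac{p_i-q_i}{p_i+q_i} \;\leq\; (\alpha+\ell n)(d-2) \;\leq\; (\alpha+\ell\gamma)(n-2),
\]
and proves the second inequality by rewriting the difference as the three-term sum
\[
(n-d)(\alpha + 2\ell - \ell n) \;+\; 2\ell(d-\gamma) \;+\; n\ell(n-2d+\gamma),
\]
each summand being nonnegative (the first by diagonal dominance $\alpha \geq (n-2)\ell$, the second because $d - \gamma = \sum_j 2q_j(p_j-q_j)/(p_j+q_j) \geq 0$, and the third by the clean combinatorial identity $n-2d+\gamma = s + \sum_j 4q_j^2/(p_j+q_j)$). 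This avoids fixing $\alpha = (n-2)\ell$, avoids Cauchy--Schwarz, and avoids any quadratic or discriminant analysis. Your approach --- reducing to the boundary $\alpha=(n-2)\ell$, bounding $\gamma$ from below by Cauchy--Schwarz, and then analyzing the parabola $Q(V)$ --- is logically sound and isolates the combinatorial core into a single explicit inequality, but it is considerably more computational, and the final step deserves a caution: verifying $(s_i-2)^2\bigl[n(n-1)^2 - s_i(2n-3)^2\bigr] \leq (n-2)^3 s_i^2$ by ``checking endpoints and one interior point'' is not by itself a proof for a cubic in $s_i$. The inequality is true --- for example, the maximum of $(s_i-2)^2(A-Bs_i)$ over $s_i \geq 2$ equals $\tfrac{4}{27B^2}\max\{0, A-2B\}^3$ with $A=n(n-1)^2$, $B=(2n-3)^2$, and one checks $A-2B = (n-2)(n^2-8n+9) < (n-2)n^2$ while $A+B > \tfrac{3}{4}n^3$ for $n\geq 7$ (smaller $n$ having $A-2B\leq 0$) --- but you should supply such an argument rather than appeal to spot checks. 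On balance the paper's three-summand decomposition is the cleaner of the two; your route shows the result can also be reached by a more brute-force reduction.
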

\begin{proof}
Let $N = \lim_{t \rightarrow \infty} (S+tP)^{-1}$. If $r = 0$ then $\|N\|_\infty = 0$, so suppose that $r \geq 1$. As before, assume that $p_i \geq q_i$ for all $i = 1,\dots,r$. It suffices
to show that the $1$-norms of the rows of $N$, as computed 
in \eqref{Eq:OneNormRow}, are at most $ \| S^{-1} \|_\infty $, with
equality achieved only at $J = S$. The inequality is trivial if
$p_i=q_i$ so we may assume $p_i-q_i \geq 1$. We outline the proof and
leave the details to the reader. The key is to show that
\begin{align*}
  (\alpha+\ell n)(d-2p_i+2q_i) \left(\frac{p_i-q_i}{p_i+q_i} \right)
  \,\, \leq \,\,(\alpha+\ell n)( d - 2) \,\, \leq \,\,(\alpha+\ell \gamma)( n- 2).
\end{align*}
The latter inequality is equivalent to
\begin{align*}
  0 \,\, \leq \,\, (n-d)(\alpha + 2\ell- \ell n) + 2\ell(d-\gamma)+ n \ell (n-2d+\gamma).
\end{align*}
The first summand is nonnegative because $S$ is diagonally
dominant, while the last summand satisfies
\begin{align*}
  n-2d+\gamma\,\,= \,\, s + \sum_{j=1}^r
  \frac{4q_j^2}{p_j+q_j} \,\,\geq \,\,0.
\end{align*}
Finally, if equality $\| N \|_\infty=\| S^{-1} \|_\infty$ is achieved, then $s
= 0$ and $q_j = 0$ for all $j$, so $P = 0$.
\end{proof}

\section{Analysis of $\|J^{-1}\|_\infty$ in a Neighborhood of $S$}
\label{Sec:SmallT}

The arguments in Sections~\ref{Sec:CornerCase} and~\ref{Sec:Infinity}
show that for $J \geq  S:= \alpha \eye_n + \ell \ones_n\ones_n^\top$,
the maximum of $\|J^{-1}\|_\infty$ is attained at $J =S$. To prove
that $S$ is the unique maximizer, we will show that
$\|J^{-1}\|_\infty$ is strictly decreasing near $S$. Let $P \geq 0$ be a nonzero symmetric diagonally dominant matrix, and consider the function
\begin{equation*}
f(t) = \|(S + tP)^{-1}\|_\infty, \quad t \geq 0.
\end{equation*}
In our proof, we study the linear part $S^{-1} - t S^{-1} P
S^{-1}$ of the Neumann series for $(S + tP)^{-1}$. Let us define 
$g(t) = \|S^{-1} - t S^{-1} P S^{-1} \|_\infty$ and $h(t) = f(t)-g(t)$.
Our main result in this section is the following.

\begin{proposition}\label{Lem:Decreasing}
The function $f(t)$ is differentiable at $t = 0$ and $f'(0) < 0$.
\end{proposition}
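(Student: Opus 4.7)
The plan is to bypass the non-smoothness of $\|\cdot\|_\infty$ by isolating, for each row, a smooth function whose maximum defines $f$. By Lemma~\ref{Thm:invertsimplematrix},
\[
S^{-1} \;=\; \frac{1}{\alpha}\,\eye \;-\; \frac{\ell}{\alpha(\alpha+\ell n)}\,\ones\ones^\top,
\]
so $S^{-1}$ has strictly positive diagonal entries and strictly negative off-diagonal entries. By continuity of matrix inversion, $M(t) := (S + tP)^{-1}$ inherits this sign pattern for all sufficiently small $t \geq 0$. Setting $\sigma^{(i)} := 2\basis_i - \ones$, the $1$-norm of the $i$-th row of $M(t)$ is therefore the \emph{smooth} function $r_i(t) := \basis_i^\top M(t)\,\sigma^{(i)}$. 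Because every row of $S^{-1}$ has the same $1$-norm (by $\ones\ones^\top$-symmetry), the values $r_i(0)$ all coincide with $f(0)$, so $\frac{f(t)-f(0)}{t} = \max_i \frac{r_i(t)-r_i(0)}{t}$ for $t > 0$. Letting $t \to 0^+$ shows that the right derivative $f'(0) = \max_i r_i'(0)$ exists, and the task reduces to proving $r_i'(0) < 0$ for every $i$.

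To compute $r_i'(0)$, I would differentiate $M(t)(S+tP)=\eye$ at $t=0$ to obtain $M'(0) = -S^{-1}PS^{-1}$, yielding
\[
r_i'(0) \;=\; -\basis_i^\top S^{-1} P S^{-1} \sigma^{(i)} \;=\; -u_i^\top P\, w_i,
\]
where $u_i := S^{-1}\basis_i$ and $w_i := S^{-1}\sigma^{(i)} = 2u_i - \frac{1}{\alpha+\ell n}\ones$. From the explicit formula for $S^{-1}$, any $x \in \R^n$ satisfies
\[
x^\top u_i \;=\; \frac{x_i}{\alpha} - \frac{\ell\,\ones^\top x}{\alpha(\alpha+\ell n)}, \qquad x^\top w_i \;=\; \frac{2 x_i}{\alpha} - \frac{(\alpha+2\ell)\,\ones^\top x}{\alpha(\alpha+\ell n)}.
\]
It thus suffices to show $u_i^\top P w_i > 0$.

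The final step exploits the structure of $P$. Setting $\delta_k := P_{kk} - \sum_{j \neq k} P_{jk} \geq 0$ (diagonal dominance), $P$ admits the conic decomposition
\[
P \;=\; \sum_{k=1}^n \delta_k\,\basis_k\basis_k^\top \;+\; \sum_{1 \leq j < k \leq n} P_{jk}\,(\basis_j+\basis_k)(\basis_j+\basis_k)^\top,
\]
in which every scalar coefficient is nonnegative and at least one is strictly positive, since $P \neq 0$. By bilinearity, $u_i^\top P w_i$ is a nonnegative combination of terms $(x^\top u_i)(x^\top w_i)$ over atoms $x \in \{\basis_k\} \cup \{\basis_j+\basis_k\}$. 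A short case analysis using the displayed formulas shows that for every such atom $x$ and every index $i$, the scalars $x^\top u_i$ and $x^\top w_i$ are either both positive (when $i$ lies in the support of $x$) or both negative, and neither vanishes; the critical sub-case $x = \basis_j+\basis_k$ with $i \in \{j,k\}$ yields $x^\top w_i = \frac{2\ell(n-2)}{\alpha(\alpha+\ell n)}$, where the hypothesis $n \geq 3$ is used in an essential way. Consequently, every term in the decomposition contributes strictly positively, so $u_i^\top P w_i > 0$, giving $r_i'(0) < 0$ for every $i$ and hence $f'(0) = \max_i r_i'(0) < 0$.

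The main obstacle is the first step: making sense of $f'(0)$ at all, since $\|\cdot\|_\infty$ is generally not differentiable. The structural features that unlock the argument are that the sign pattern of $M(t)$ is locked near $t = 0$ \emph{and} every row of $S^{-1}$ simultaneously attains the $\infty$-norm; together these reduce the derivative computation to one algebraic inequality per row, and that inequality is then made transparent by the nonnegative rank-one decomposition of the symmetric diagonally dominant matrix $P$.
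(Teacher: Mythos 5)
Your proof is correct, and it reaches the same key quantity as the paper --- note that $r_i'(0) = -\basis_i^\top Q(2\basis_i - \ones) = -(Q_{ii} - \sum_{j\neq i}Q_{ij})$ where $Q = S^{-1}PS^{-1}$, so $f'(0) = -\min_i (Q_{ii} - \sum_{j\neq i}Q_{ij})$ exactly as in the paper --- but via a genuinely different route at both stages. For differentiability, the paper splits $f = g + h$ with $g(t) = \|S^{-1} - tS^{-1}PS^{-1}\|_\infty$, uses a Neumann-series estimate to show $h'(0) = 0$, and applies the sign-pattern argument to the linear approximant $S^{-1} - tQ$ (Propositions~\ref{Prop:h(t)} and~\ref{Prop:g(t)}); you instead observe that the sign pattern of $M(t) = (S+tP)^{-1}$ itself is locked for small $t$ and that all rows of $S^{-1}$ attain the common $1$-norm, so $f$ is a maximum of finitely many smooth functions that agree at $t=0$ and the Neumann detour disappears. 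For the positivity $Q_{ii} - \sum_{j\neq i}Q_{ij} > 0$ (the paper's Proposition~\ref{Prop:PosDiag}), the paper expands $Q$ algebraically in terms of $\Delta_i(P)$, $P_{ii}$, and $\sum_{j,k\neq i}P_{jk}$ and checks signs coefficient by coefficient; you instead decompose $P$ over the extreme rays $\{\basis_k\basis_k^\top\}\cup\{(\basis_j+\basis_k)(\basis_j+\basis_k)^\top\}$ of the cone of nonnegative symmetric diagonally dominant matrices and show each atom contributes a nonnegative term, with at least one strictly positive since $P \neq 0$. Your conic decomposition is cleaner and more structural --- it is the same decomposition the paper uses implicitly in Lemma~\ref{Lem:EigBalanced} and in the rank-one perturbation steps of Lemma~\ref{Prop:Deformation} --- and it makes transparent exactly where $n \geq 3$ enters (the atom $x = \basis_j + \basis_k$ with $i \in \{j,k\}$ gives $x^\top w_i = 2\ell(n-2)/(\alpha(\alpha+\ell n))$, which degenerates at $n = 2$), whereas in the paper's algebraic expansion the condition $n \geq 3$ is buried in the coefficient $2(\frac{\alpha}{\ell}+n-1)(n-3) + \frac{\alpha}{\ell}$ of $P_{ii}$.
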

\begin{proof}
Since $f(t) = g(t)+h(t)$, the result follows from Propositions~\ref{Prop:h(t)} and \ref{Prop:g(t)}.
\end{proof}

\begin{proposition}\label{Prop:h(t)}
The function $h(t)$ is differentiable at $t = 0$ and $h'(0) = 0$ .
\end{proposition}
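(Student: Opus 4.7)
The plan is to estimate $h(t)$ by comparing $(S+tP)^{-1}$ to its first-order Taylor expansion via the Neumann series, and then conclude differentiability at $t=0$ from a direct $O(t^2)$ bound.

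First I would note that $h(0) = 0$, since $f(0) = g(0) = \|S^{-1}\|_\infty$. Then, since $S^{-1}$ is invertible and $P$ is fixed, the operator $tPS^{-1}$ has $\infty$-norm strictly less than $1$ for all sufficiently small $t \geq 0$, so the Neumann series
\begin{equation*}
(S+tP)^{-1} \;=\; S^{-1}(I + tPS^{-1})^{-1} \;=\; S^{-1} \sum_{k=0}^\infty (-t)^k (PS^{-1})^k
\end{equation*}
converges absolutely in the $\infty$-norm. Subtracting the first two terms of this series gives
\begin{equation*}
(S+tP)^{-1} - S^{-1} + t S^{-1} P S^{-1} \;=\; \sum_{k=2}^\infty (-t)^k S^{-1}(PS^{-1})^k,
\end{equation*}
whose $\infty$-norm is bounded by $C t^2$ for some constant $C$ depending only on $S$ and $P$, uniformly in $t$ in a small neighborhood of $0$.

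Next I would apply the reverse triangle inequality for the matrix $\infty$-norm: setting $A(t) = (S+tP)^{-1}$ and $B(t) = S^{-1} - tS^{-1}PS^{-1}$, we have
\begin{equation*}
|h(t)| \;=\; \bigl|\,\|A(t)\|_\infty - \|B(t)\|_\infty\,\bigr| \;\leq\; \|A(t) - B(t)\|_\infty \;\leq\; C t^2.
\end{equation*}
Therefore $h(t)/t \to 0$ as $t \to 0^+$, and combined with $h(0)=0$ this is exactly the statement that $h$ is (right-)differentiable at $0$ with $h'(0) = 0$.

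There is no real obstacle here, since the argument reduces to controlling the second-order remainder of a Neumann series. The only subtlety to flag is that $f$ and $g$ are themselves only one-sided functions of $t \geq 0$ (and are not a priori smooth because of the $\infty$-norm), so the derivative at $t=0$ should be understood as a right-derivative; the reverse-triangle-inequality bound $|f(t)-g(t)| \leq \|A(t)-B(t)\|_\infty = O(t^2)$ sidesteps any need to differentiate $f$ or $g$ individually, and that is what the next proposition (handling $g'(0)$) is set up to do separately.
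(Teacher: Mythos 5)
Your proof is correct and follows essentially the same strategy as the paper's: expand $(S+tP)^{-1}$ via a Neumann series, subtract the first-order approximation $S^{-1} - tS^{-1}PS^{-1}$, and use the reverse triangle inequality to bound $|h(t)|$ by $O(t^2)$. The only cosmetic difference is that the paper factors symmetrically through $S^{-1/2}$ (which lets it re-sum the tail into a closed-form bound $2t^2\|S^{-1}PS^{-1}P\|_\infty\|S^{-1}\|_\infty$), whereas you factor through $S^{-1}$ directly; both give the needed $Ct^2$ estimate.
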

\begin{proof}
For sufficiently small $t > 0$, by the Neumann series for $(\eye + t S^{-1/2} P S^{-1/2} )^{-1}$ we can write
\begin{equation*}
(S + tP)^{-1} = S^{-1/2} (\eye + t S^{-1/2} P S^{-1/2} )^{-1} S^{-1/2} = S^{-1/2} \left( \sum_{k = 0}^\infty (-t)^k \big(S^{-1/2} P S^{-1/2} \big)^k \right) S^{-1/2}.
\end{equation*}
By the reverse triangle inequality and submultiplicativity of $\| \cdot \|_\infty$, we have
\begin{equation*}
\begin{split}
|h(t)| &\leq \left\| (S + tP)^{-1} - S^{-1} + t S^{-1} P S^{-1} \right\|_\infty \\
&= \left\| S^{-1/2} \left( \sum_{k = 2}^\infty (-t)^k \big(S^{-1/2} P S^{-1/2} \big)^k \right) S^{-1/2} \right\|_\infty \\
&= t^2 \: \left\| S^{-1/2} \: \big(S^{-1/2}PS^{-1/2}\big)^2 \: S^{1/2} \: (S + tP)^{-1} \right\|_\infty \\
&\leq 2t^2 \: \|S^{-1}P S^{-1}P\|_\infty \: \|S^{-1}\|_\infty,
\end{split}
\end{equation*}
where the last inequality holds for sufficiently small $t > 0$ since
by continuity $\|(S + tP)^{-1}\|_\infty \leq 2\|S^{-1}\|_\infty$ for small $t$. It follows that $h'(0) = \lim_{t \to 0} (h(t)-h(0))/t = 0$.
\end{proof}

\begin{proposition}\label{Prop:g(t)}
The function  $g(t)$ is differentiable at $t = 0$ and $g'(0) < 0$.
\end{proposition}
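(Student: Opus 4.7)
The plan is to exploit the fact that $g$ is locally the maximum of finitely many linear functions of $t$ at $t=0$, all agreeing there, so that both the differentiability of $g$ at $0$ and the sign of $g'(0)$ reduce to an analysis of their slopes. By Lemma~\ref{Thm:invertsimplematrix}, $S^{-1} = aI_n - b\ones_n\ones_n^\top$ with $a := 1/\alpha > 0$ and $b := \ell/(\alpha(\alpha+\ell n)) > 0$, so the diagonal entries $a-b$ of $S^{-1}$ are positive while the off-diagonal entries $-b$ are negative. By continuity, the matrix $M(t) := S^{-1} - tS^{-1}PS^{-1}$ has the same sign pattern for all sufficiently small $t \geq 0$, and the $1$-norm of its $i$-th row is therefore the linear function
\[
r_i(t) \,=\, \basis_i^\top M(t)(2\basis_i - \ones) \,=\, r_i(0) + t\,c_i, \qquad c_i \,:=\, -\basis_i^\top S^{-1} P S^{-1}(2\basis_i - \ones),
\]
with $r_i(0) = a + b(n-2) = \|S^{-1}\|_\infty$ independent of $i$. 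Hence $g(t) = \|S^{-1}\|_\infty + t\max_i c_i$ in a right-neighborhood of $0$, yielding differentiability at $0$ with $g'(0) = \max_i c_i$; it remains to show $c_i<0$ for every $i$.

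Fix $i$, and set $w := S^{-1}\basis_i$ and $u := S^{-1}(2\basis_i - \ones)$, so that $c_i<0$ is equivalent to $w^\top P u > 0$. A direct calculation gives $w_i = a-b$, $w_j = -b$ for $j\neq i$, $u_i = a+b(n-2)$, and $u_j = -(a-b(n-2))$ for $j\neq i$. The key step is the positive decomposition
\[
P \,=\, \sum_{k<l} P_{kl}(\basis_k+\basis_l)(\basis_k+\basis_l)^\top \,+\, \sum_k \Bigl(P_{kk} - \sum_{l\neq k}P_{kl}\Bigr)\basis_k\basis_k^\top,
\]
whose coefficients are nonnegative by the diagonal dominance and positivity of $P$ and not all zero since $P \neq 0$. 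This writes $w^\top P u$ as a nonnegative combination of products $(w^\top v)(v^\top u)$ with $v \in \{\basis_k\}\cup\{\basis_k+\basis_l : k<l\}$, so it suffices to verify each such product is strictly positive.

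The final step is a four-case check according to whether the support of $v$ meets $\{i\}$; in each case the product evaluates to a specific positive quantity: $(a-b)(a+b(n-2))$ when $v=\basis_i$; $b(a-b(n-2))$ when $v=\basis_k$ with $k\neq i$; $2b(n-2)(a-2b)$ when $v=\basis_i+\basis_k$ with $k\neq i$; and $4b(a-b(n-2))$ when $v=\basis_k+\basis_l$ with distinct $k,l\neq i$. Positivity in every case follows from $n\geq 3$ and the identity $a/b = \alpha/\ell + n > n$, which forces $a/b$ to exceed both $2$ and $n-2$. I expect this bookkeeping to be the only computation of substance; the main conceptual moves are the linearization of the row $1$-norms via the stable sign pattern and the positive decomposition of $P$.
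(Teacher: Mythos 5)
Your proof is correct and reaches the same conclusion as the paper, but the key positivity step is handled by a genuinely different argument. Both proofs start identically: use the sign pattern of $S^{-1}$ (positive diagonal, negative off-diagonal), note it is stable under small perturbations, and linearize the row $1$-norms; both reduce the claim to showing $Q_{ii}-\sum_{j\neq i}Q_{ij}>0$ (in your notation, $-c_i>0$) for all $i$, where $Q=S^{-1}PS^{-1}$.

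Where you diverge is in proving this positivity. The paper (in its Proposition~\ref{Prop:PosDiag}) expands $Q = a^2P - abp\ones^\top - ab\ones p^\top + b^2\pi\ones\ones^\top$ with $p = P\ones$, $\pi = \ones^\top P\ones$, computes $Q_{ii}-\sum_{j\neq i}Q_{ij}$ as a scalar expression, substitutes $a/b = \alpha/\ell + n$, and groups the result into a nonnegative combination of $\Delta_i(P)$, $P_{ii}$, and $\sum_{j,k\neq i}P_{jk}$. You instead decompose $P$ itself into the nonnegative combination $P = \sum_{k<l}P_{kl}(\basis_k+\basis_l)(\basis_k+\basis_l)^\top + \sum_k \Delta_k(P)\basis_k\basis_k^\top$ and reduce to a four-case sign check on products $(w^\top v)(v^\top u)$. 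Your route is more structural: the rank-one decomposition makes the role of diagonal dominance transparent (it is exactly what guarantees nonnegative coefficients), and the four-case check is less error-prone than the paper's algebraic regrouping. The paper's route gives a closed-form expression for the derivative $\xi$, which is not needed here but may be of independent interest. Both correctly invoke $P\neq 0$ to get strict positivity. Your argument is complete; the only cosmetic remark is that both you and the paper technically obtain a one-sided derivative at $t=0$, since $f,g,h$ are only considered for $t\geq 0$, but this is the intended reading.
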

\begin{proof}
Set $Q = S^{-1} P S^{-1}$. Note that for sufficiently small $t > 0$,
the entries of $S^{-1} - tQ$ have the same sign as the corresponding
entries of $S^{-1}$. Since $(S^{-1})_{ii} > 0$ and $(S^{-1})_{ij} < 0$ for $i \neq j$, we can write 
\begin{equation*}
\begin{split}
g(t) = \|S^{-1} - t Q \|_\infty
&= \textstyle \max_i \,\,\sum_j \big| (S^{-1})_{ij} - tQ_{ij} \big| \\
&= \textstyle \max_i \,\,\big[ \,(S^{-1})_{ii} - tQ_{ii} +
\sum_{j\neq i} \big( tQ_{ij} - (S^{-1})_{ij} \big) \,\big] \\
&= \textstyle \max_i \,\,\big[ -(Q_{ii}-\sum_{j\neq i} Q_{ij})\,t +
\|S^{-1}\|_\infty \big] \\
&= -\xi t + \|S^{-1}\|_\infty.
\end{split}
\end{equation*}
where $\xi= \min_i \,(Q_{ii}-\sum_{j\neq i} Q_{ij})> 0$ by
Proposition~\ref{Prop:PosDiag} below. Thus, $g'(0) = -\xi < 0$ as required.
\end{proof}

\begin{proposition}\label{Prop:PosDiag}
Let $Q = S^{-1} P S^{-1}$. Then $Q_{ii} - \sum_{j\neq i} Q_{ij} > 0$
for all $i$.
\end{proposition}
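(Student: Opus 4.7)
The plan is to linearize the problem and reduce to a small finite set of atomic matrices. Since $P \geq 0$ is symmetric and diagonally dominant, it admits the signless-Laplacian-style decomposition
\[
P \;=\; \sum_{k=1}^n \Delta_k(P)\,\Basis_{kk} \;+\; \sum_{1 \leq k < l \leq n} P_{kl}\,\Basis_{kl},
\]
in which all coefficients are non-negative, and at least one is strictly positive because $P \neq 0$. The map $P \mapsto (S^{-1}PS^{-1})_{ii} - \sum_{j \neq i}(S^{-1}PS^{-1})_{ij}$ is linear in $P$, so it suffices to prove strict positivity of this quantity when $P$ equals one of the rank-one atoms $\Basis_{kk}$ or $\Basis_{kl}$.

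Both atoms give $Q = vv^\top$ for a simple vector $v$. Writing $S^{-1} = a\eye_n - b\ones_n\ones_n^\top$ with $a = 1/\alpha$ and $b = \ell/[\alpha(\alpha+\ell n)]$ via Lemma~\ref{Thm:invertsimplematrix}, we find $v = a\basis_k - b\ones_n$ for $P = \Basis_{kk}$ and $v = a(\basis_k+\basis_l) - 2b\ones_n$ for $P = \Basis_{kl}$. In either case the quantity of interest collapses to
\[
Q_{ii} - \sum_{j \neq i} Q_{ij} \;=\; 2v_i^2 - v_i\,\ones_n^\top v \;=\; v_i\bigl(2v_i - \ones_n^\top v\bigr),
\]
and the task reduces to four sub-cases (the atom is $\Basis_{kk}$ or $\Basis_{kl}$; the index $i$ is inside or outside $\{k,l\}$).

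Each of the four evaluations factors into a manifestly positive product. For the atom $\Basis_{kl}$ with $i = k$, one obtains $2b(n-2)(a - 2b)$, where $a - 2b = [\alpha + \ell(n-2)]/[\alpha(\alpha+\ell n)] > 0$ and $b(n-2) > 0$ precisely because $n \geq 3$. The other three sub-cases yield $(a-b)(a + b(n-2))$, $b(a - b(n-2))$, and $4b(a - b(n-2))$, all plainly positive under $\alpha, \ell > 0$. Combining these with linearity and the non-negativity (plus strict positivity of at least one term) in the decomposition of $P$ finishes the proof.

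I expect the main technical work to be the case-by-case algebra, because $S^{-1}$ has mixed signs---positive diagonal and negative off-diagonals---and so $Q = S^{-1}PS^{-1}$ inherits no immediate diagonal-dominance property from $P$; the desired inequality relies on genuine sign cancellations, which the atomic decomposition exposes cleanly. Apart from this bookkeeping, the hypothesis $n \geq 3$ enters in exactly one place, namely the factor $b(n-2)$ in the $\Basis_{kl}$, $i \in \{k,l\}$ sub-case, which would vanish if $n$ were allowed to be $2$.
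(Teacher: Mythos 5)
Your proof is correct, and it takes a genuinely different route from the paper. The paper expands $Q = a^2P - abp\ones^\top - ab\ones p^\top + b^2\pi\ones\ones^\top$ with $p = P\ones$, $\pi = \ones^\top P\ones$, computes $Q_{ii} - \sum_{j\neq i}Q_{ij}$ directly, and then performs a nontrivial rearrangement (using $a/b = \alpha/\ell + n$) to express the result as a manifestly positive combination of $\Delta_i(P)$, $P_{ii}$, and $\sum_{j,k\neq i}P_{jk}$. You instead exploit linearity in $P$ together with the signless-Laplacian decomposition $P = \sum_k \Delta_k(P)\Basis_{kk} + \sum_{k<l}P_{kl}\Basis_{kl}$ (the same cone of atoms that drives Section~\ref{Sec:CornerCase}), so that each atom gives $Q = vv^\top$ and the target functional collapses to $v_i(2v_i - \ones^\top v)$ --- a product of two scalars each of which factors transparently. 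Your four sub-case evaluations $(a-2b)\,2b(n-2)$, $4b\bigl(a - b(n-2)\bigr)$, $(a-b)\bigl(a+b(n-2)\bigr)$, and $b\bigl(a - b(n-2)\bigr)$ all check out against $a = 1/\alpha$, $b = \ell/[\alpha(\alpha+\ell n)]$, and since every atom yields a strictly positive contribution at \emph{every} index $i$, the strict inequality follows for all $i$ from $P \neq 0$ alone. Your route is more modular, more clearly isolates the single place $n \geq 3$ is used, and makes visible that only $\alpha,\ell > 0$ (not diagonal dominance of $S$) is needed; the paper's computation is shorter to state but requires finding the right regrouping by hand.
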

\begin{proof}
For simplicity, let us write $S^{-1} = a\eye_n - b\ones_{n}
\ones_{n}^\top $. Then,
\begin{equation*}
\begin{split}
Q = (a\eye - b\ones \ones^\top) \: P \: (a\eye - b\ones \ones^\top)
= a^2P - abp \ones^\top - ab\ones p^\top + b^2 \pi\ones \ones^\top 
\end{split}
\end{equation*}
where $p = P\ones = (p_1, \ldots, p_n)$ and $\pi = \ones^\top P \ones$. It is
straightforward to check that
\begin{align*}
  Q_{ii}-\textstyle \sum_{j\neq i} Q_{ij} \,=\, a^2(2P_{ii}-p_i) + abp_i (n-4) + b\pi (a+2b-bn).
\end{align*}
From equation \eqref{eq:formulaforSinverse}, we get $a/b =
\alpha/\ell + n$. Substituting this relation and
rearranging gives us
\begin{align*}
  Q_{ii}-{\textstyle \sum_{j\neq i} Q_{ij} } \,=\,\,
  &b^2\left[\left(\frac{\alpha}{\ell}+n-2\right)\left(\frac{\alpha}{\ell}+4\right)+4\right]\Delta_i(P) \,\,\,+\,\,\, \\
 &b^2\left[2\left(\frac{\alpha}{\ell}+n-1\right)\Big(n-3\Big) +
   \frac{\alpha}{\ell}\right] P_{ii}
  \,\,+\,\, b^2\left(\frac{\alpha}{\ell}+2\right) \sum_{j, k\neq i}
  P_{jk}.
\end{align*}
Because $S$ is diagonally dominant, we have $\alpha/\ell \geq
n-2 > 0$. It is not difficult to deduce that if $P \neq 0$, then the above expression is
always positive, as required.
\end{proof}

\section{Proof of Theorem~\ref{Thm:Main}}
\label{Sec:MainProof}

Theorem~\ref{Thm:Main} is a special case of the following theorem when
$\alpha = (n-2)\ell$.

\begin{theorem}\label{Thm:GeneralMain}
Let $n \geq 3$ and suppose $S = \alpha \eye_n + \ell \ones_n
\ones_n^\top$ is diagonally dominant with $\alpha,\ell > 0$. For all
$n \times n$ symmetric diagonally dominant matrices $J \geq S$, we have
\begin{equation*}
\|J^{-1}\|_\infty \leq \|S^{-1}\|_\infty = \frac{\alpha+2\ell(n-1)}{\alpha(\alpha+\ell n)}.
\end{equation*}
Furthermore, equality is achieved if and only if $J =
S$.
\end{theorem}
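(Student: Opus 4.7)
The plan is to assemble the three ingredients from Sections~\ref{Sec:CornerCase}--\ref{Sec:SmallT}. For each $m > \ell$, set $M(m) := \max_{J \in \D_m} \|J^{-1}\|_\infty$; this maximum exists by continuity of $J \mapsto \|J^{-1}\|_\infty$ and compactness of $\D_m$. A direct rescaling argument shows $\D_{m_1} \subseteq \D_{m_2}$ whenever $m_1 \leq m_2$, so $M(\cdot)$ is non-decreasing. Since $S \in \D_m$ (take $P = 0$), we have $M(m) \geq \|S^{-1}\|_\infty$ automatically, and every $J$ satisfying the theorem's hypotheses lies in some $\D_m$ for $m$ sufficiently large; hence it suffices to prove $M(m) \leq \|S^{-1}\|_\infty$ for all $m$.

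By Proposition~\ref{Prop:CornerCaseConnected}, each $M(m)$ is realized at a corner matrix $J_m = S + (m-\ell)P_m$, with $P_m$ a signless Laplacian. Since there are only finitely many signless Laplacians on $n$ vertices, some fixed $P^{\star}$ equals $P_{m_k}$ along a subsequence $m_k \to \infty$, and the explicit formulae of Section~\ref{Sec:Infinity} give
\begin{equation*}
M(m_k) \;=\; \|(S + (m_k-\ell) P^{\star})^{-1}\|_\infty \;\to\; \|N_{P^{\star}}\|_\infty.
\end{equation*}
Combined with monotonicity, $\sup_m M(m) = \|N_{P^{\star}}\|_\infty$, which by Corollary~\ref{Thm:limitatinfinity} is at most $\|S^{-1}\|_\infty$, with equality if and only if $P^{\star} = 0$. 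This pinches $M(m) = \|S^{-1}\|_\infty$ for every $m$, yielding the non-strict bound.

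For the equality case, suppose toward a contradiction that $J \neq S$ satisfies $\|J^{-1}\|_\infty = \|S^{-1}\|_\infty$. Pick $m$ large enough that $J \in \D_m$ and, additionally, so that every corner matrix $S + (m-\ell)P$ with $P$ a \emph{nonzero} signless Laplacian has $\|(S + (m-\ell)P)^{-1}\|_\infty < \|S^{-1}\|_\infty$; this is possible because such $P$ form a finite set and Corollary~\ref{Thm:limitatinfinity} provides a strict limit inequality for each. By Proposition~\ref{Prop:CornerCaseConnected}, $J$ lies in the maximizer set $\T \subseteq \D_m$ and is path-connected within $\T$ to a corner matrix, which by our choice of $m$ must be $S$ itself. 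But Proposition~\ref{Lem:Decreasing} shows that for every symmetric diagonally dominant nonzero $P' \geq 0$, $\|(S + tP')^{-1}\|_\infty < \|S^{-1}\|_\infty$ for all sufficiently small $t > 0$, so a punctured neighborhood of $S$ in $\D_m$ is disjoint from $\T$. A continuous path in $\T$ ending at $S$ is then forced to meet this disjoint neighborhood just before reaching $S$, a contradiction. I expect the main subtlety to be the last step: one must verify that Proposition~\ref{Lem:Decreasing} can be applied uniformly along the tail of the path (i.e., that there is a single neighborhood of $S$ excluding all nonzero directions), rather than only direction-by-direction.
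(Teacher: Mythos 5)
Your proof is essentially the paper's proof, reorganized. The paper shows directly that for sufficiently large $m$ the set of maximizers $\T_m$ equals $\{S\}$: other corner matrices have norm strictly below $\|S^{-1}\|_\infty$ once $m$ is large (by Corollary~\ref{Thm:limitatinfinity} plus finiteness of signless Laplacians), so Proposition~\ref{Prop:CornerCaseConnected} forces every maximizer into the path component of $S$ in $\T_m$, and Proposition~\ref{Lem:Decreasing} makes $S$ isolated in $\T_m$. Your pigeonhole-plus-monotonicity argument for the non-strict bound is a neat variation: it reaches $M(m) = \|S^{-1}\|_\infty$ for \emph{all} $m > \ell$ using only Proposition~\ref{Prop:CornerCaseConnected} and Corollary~\ref{Thm:limitatinfinity}, deferring Proposition~\ref{Lem:Decreasing} entirely to the uniqueness step, whereas the paper uses all three ingredients at once. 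Either ordering is fine; the underlying lemmas are identical.

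The one genuine subtlety is the one you flagged yourself, and it deserves more than a remark: Proposition~\ref{Lem:Decreasing} is a one-variable statement about $t \mapsto \|(S+tP)^{-1}\|_\infty$ for a \emph{fixed} direction $P$, while isolating $S$ in $\T_m$ (or discarding a punctured neighborhood) needs a decrease that is uniform over all admissible directions. This does not follow automatically from a strictly negative directional derivative in each direction. It does follow here with a short compactness argument that is worth recording: restrict to normalized directions $P \geq 0$, symmetric, diagonally dominant, $\|P\|_\infty = 1$, a compact set $\mathcal{K}$. From the proof of Proposition~\ref{Prop:g(t)} one gets $g_P(t) = \|S^{-1}\|_\infty - \xi(P)t$ for all $t$ in an interval $[0,t_0]$ with $t_0$ \emph{independent} of $P \in \mathcal{K}$, because the sign-preservation threshold only involves the fixed entries of $S^{-1}$ and the uniformly bounded entries of $Q = S^{-1}PS^{-1}$; moreover $\xi$ is continuous and positive on $\mathcal{K}$ (Proposition~\ref{Prop:PosDiag}), hence bounded below by some $\xi_0 > 0$. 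From the proof of Proposition~\ref{Prop:h(t)} one gets $|h_P(t)| \leq C t^2$ with $C$ uniform over $\mathcal{K}$ and $t \leq t_0$, shrinking $t_0$ if necessary so that $\|(S+tP)^{-1}\|_\infty \leq 2\|S^{-1}\|_\infty$ holds uniformly (joint continuity on a compact set). Then $\|(S+tP)^{-1}\|_\infty \leq \|S^{-1}\|_\infty - \xi_0 t + Ct^2 < \|S^{-1}\|_\infty$ for all $0 < t < \min(t_0, \xi_0/C)$ and all $P \in \mathcal{K}$, which is exactly the uniform punctured neighborhood you need. The paper elides this; your instinct to call it out is correct, and with the compactness fix in place your argument is complete.
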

\begin{proof} Recall from Section~\ref{Sec:CornerCase} that $\D_m$ is the
set of symmetric diagonally dominant matrices $J$ with $\ell \leq J_{ij} \leq m$
and $\Delta_i(S) \leq \Delta_i(J) \leq \Delta_i(S) + 2(m-\ell)$ for all $i \neq j$. Recall also that $
S+(m-\ell)P \in \D_m$ is a corner matrix
if $P$ is a signless
Laplacian. Let $\T_m$ be the set of matrices $J \in \D_m$ maximizing
$\|J^{-1}\|_\infty$. We claim that for sufficiently large $m > \ell$, we have
$\T_m = \{S\}$. Indeed, from Corollary~\ref{Thm:limitatinfinity} and for large $m$:
\begin{equation*}
\|J^{-1}\|_\infty < \|S^{-1}\|_\infty, \ \  \text{for all corner
  matrices }J \in \D_m \setminus \{S\}.
\end{equation*}
Thus, by Proposition~\ref{Prop:CornerCaseConnected}, every $J \in \T_m$ must be
path-connected to the corner matrix $S$. Since
Proposition~\ref{Lem:Decreasing} implies that $S$ is
an isolated point in $\T_m$, we must have $\T_m = \{S\}$ as claimed.

Finally, suppose $J^\ast \geq S$ is a symmetric diagonally dominant
matrix with $\|(J^\ast)^{-1}\|_\infty \geq
\|S^{-1}\|_\infty$. We will show that $J^\ast= S$, which proves
Theorem~\ref{Thm:Main}. We assume that $m$ is sufficiently large
with $J^\ast \in \D_m$ and $\T_m = \{S\}$. Then $S$ is the unique
maximizer of $\|J^{-1}\|_\infty$ for $J \in \D_m$ so that $J^\ast = S$, as desired. 
\end{proof}

\section{Extensions of Hadamard's Inequality}
\label{Sec:Hadamard}

Our arguments for proving Theorem \ref{Thm:DetBound} are inspired by block LU factorization ideas in \cite{demmel1992block}. For $1 \leq i \leq n$, let $J_{(i)}$ be the lower right $(n-i+1){\times}(n-i+1)$ block of $J$, so $J_{(1)} = J$ and $J_{(n)} = (J_{nn})$.  Also, for $1 \leq i \leq n-1$, let $b_{(i)} \in \R^{n-i}$ be the column vector such that
\begin{equation*}
J_{(i)} = 
\begin{pmatrix} J_{ii} & b_{(i)}^{\top} \\ b_{(i)} & J_{(i+1)} \end{pmatrix}.
\end{equation*}
Then our block decomposition takes the form, for $1 \leq i \leq n-1$,
\begin{equation*}
J_{(i)} = 
\begin{pmatrix} 1 & U_{(i)} \\ 0 & \eye_{n-i} \end{pmatrix}
\begin{pmatrix} s_i & 0  \\ b_{(i)} & J_{(i+1)} \end{pmatrix}
\end{equation*}
with
\begin{equation*}
s_i = J_{ii} \left(1 - \frac{b_{(i)}^{\top}J_{(i+1)}^{\,-1}b_{(i)}}{J_{ii}} \right)
\quad\text{ and }\quad
U_{(i)} = b_{(i)}^{\top}J_{(i+1)}^{\,-1}.
\end{equation*}
Notice that 
$\det(J) =  J_{nn} \prod_{i=1}^{n-1} s_i$, or equivalently,
\begin{equation}\label{Eq:HadamardProdInit}
\frac{\det(J)}{\prod_{i=1}^n J_{ii}} = \prod_{i=1}^{n-1} \frac{s_i}{J_{ii}} = \prod_{i=1}^{n-1} \left(1 - \frac{b_{(i)}^{\top}J_{(i+1)}^{\,-1}b_{(i)}}{J_{ii}} \right).
\end{equation}
It remains to bound each factor $s_{i}/J_{ii}$. We first establish the following results.

\begin{lemma}\label{Lem:EigBalanced}
Let $J$ be a symmetric diagonally balanced $n \times n$ matrix with $0 < \ell \leq J_{ij} \leq m$ for $i \neq j$. Then $\ell S \preceq J \preceq mS$, and the eigenvalues $\lambda_1 \leq \dots \leq \lambda_n$ of $J$ satisfy
\begin{equation*}
(n-2)\ell \leq \lambda_i \leq (n-2)m \; \text{ for } 1 \leq i \leq n-1
\quad\text{ and }\quad
2(n-1)\ell \leq \lambda_n \leq 2(n-1)m.
\end{equation*}
Moreover, if $J$ is diagonally dominant, then the lower bounds still hold.
\end{lemma}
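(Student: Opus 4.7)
The plan is to establish the Loewner bounds $\ell S \preceq J \preceq mS$ first, and then read off the eigenvalue inequalities from Weyl's monotonicity together with an explicit spectral decomposition of $S$. The only genuinely delicate point will be identifying exactly where diagonal balance is used, so that the last sentence of the lemma (which drops the upper bound for merely diagonally dominant $J$) comes out cleanly.

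For the lower Loewner bound, I would form $M := J - \ell S$ and inspect its entries. Off the diagonal, $M_{ij} = J_{ij} - \ell \geq 0$. On the diagonal, using $J_{ii} = \sum_{k \neq i} J_{ik}$ (balance) and $S_{ii} = n-1$,
\begin{equation*}
M_{ii} \;=\; J_{ii} - \ell(n-1) \;=\; \sum_{k \neq i} (J_{ik} - \ell) \;=\; \sum_{k \neq i} M_{ik},
\end{equation*}
so $M$ is symmetric, has nonnegative off-diagonal entries, and is itself diagonally balanced. By the standard fact that a symmetric diagonally dominant matrix with nonnegative diagonal is positive semidefinite (a Gershgorin argument), $M \succeq 0$, i.e.\ $J \succeq \ell S$. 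Applied to $mS - J$ instead, the same calculation gives $(mS - J)_{ii} = \sum_{k \neq i}(m - J_{ik}) \geq 0$ and hence $J \preceq mS$. For the final sentence of the lemma, if $J$ is merely diagonally dominant then $J_{ii} \geq \sum_{k \neq i} J_{ik}$, so $M_{ii} \geq \sum_{k \neq i} M_{ik}$ still holds and the same Gershgorin argument gives $J \succeq \ell S$; however, $J_{ii}$ can be arbitrarily large, so $mS - J$ need not have nonnegative diagonal and the upper bound legitimately fails.

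For the eigenvalue bounds, I would compute the spectrum of $S = (n-2)I_n + \ones \ones^\top$ directly: on the hyperplane $\ones^\perp$ the rank-one term vanishes, contributing the eigenvalue $n-2$ with multiplicity $n-1$, while $\ones$ itself gives the eigenvalue $(n-2)+n = 2(n-1)$. Thus $\ell S$ has eigenvalues $\ell(n-2)$ with multiplicity $n-1$ and $2\ell(n-1)$ once, and similarly for $mS$. The Loewner inequalities $\ell S \preceq J \preceq mS$ combined with Weyl's monotonicity principle $A \preceq B \Rightarrow \lambda_i(A) \leq \lambda_i(B)$ applied componentwise then give exactly the inequalities stated, and the lower half of these inequalities persists in the diagonally dominant case by the observation in the previous paragraph.
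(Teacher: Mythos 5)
Your proof is correct and takes essentially the same route as the paper: form $J - \ell S$ and $mS - J$, observe these are symmetric, entrywise nonnegative, and diagonally dominant (balanced when $J$ is), conclude positive semidefiniteness, and then transfer the eigenvalue bounds from the explicit spectrum of $S$ via Loewner monotonicity, noting that only $J - \ell S \succeq 0$ survives in the merely diagonally dominant case. The one inessential variation is the sub-lemma that an entrywise nonnegative, symmetric, diagonally dominant matrix $P$ is positive semidefinite: you invoke Gershgorin, whereas the paper establishes it directly via the identity $x^\top P x \geq \sum_{i<j} P_{ij}(x_i + x_j)^2 \geq 0$; both are standard.
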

\begin{proof}
We first show that if $P \geq 0$ is a symmetric diagonally dominant matrix, then $P \succeq 0$. For any $x \in \R^n$,
\begin{equation*}
x^\top Px
= \sum_{i=1}^n P_{ii} x_i^2 + 2 \sum_{i < j} P_{ij} x_i x_j
\geq \sum_{i=1}^n \left( \sum_{j \neq i} P_{ij} \right) x_i^2 + 2 \sum_{i < j} P_{ij} x_i x_j
= \sum_{i < j} P_{ij} (x_i + x_j)^2
\geq 0.
\end{equation*}
Since the matrices $P = J-\ell S$ and $Q = mS-J$ are symmetric and diagonally balanced with nonnegative entries, it follows that $P,Q \succeq 0$ by the discussion above, which means $\ell S \preceq J \preceq mS$. The eigenvalues of $S$ are $\{n-2, \dots, n-2, 2(n-1)\}$, so the result follows by an application of~\cite[Corollary~7.7.4]{HornJohnson}. If $J$ is diagonally dominant, then $\ell S \preceq J$, and hence the lower bounds, still holds.
\end{proof}

\begin{lemma}\label{Lem:EigBalancedSubmatrix}
Let $J$ be a symmetric diagonally balanced $n \times n$ matrix with $0
< \ell \leq J_{ij} \leq m$ for $i \neq j$. For each $1 \leq i \leq n$,
let $J_{(i)}$ be the lower right $(n-i+1){\times}(n-i+1)$ block of $J$
as defined above, and suppose the eigenvalues of $J_{(i)}$ are  $\lambda_1 \leq \cdots \leq \lambda_{n-i+1}$. Then
\begin{equation*}
(n-2)\ell \leq \lambda_j \leq (n-2)m \; \text{ for } 1 \leq j \leq n-i
\quad\text{ and }\quad
(2n-i-1)\ell \leq \lambda_{n-i+1} \leq (2n-i-1)m.
\end{equation*}
Moreover, if $J$ is diagonally dominant, then the lower bounds still hold.
\end{lemma}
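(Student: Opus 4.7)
The plan is to deduce these eigenvalue bounds for $J_{(i)}$ directly from Lemma~\ref{Lem:EigBalanced} applied to $J$, by observing that the Loewner ordering passes to principal submatrices and then computing the spectrum of the principal submatrix $S_{(i)}$ exactly.

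First I would apply Lemma~\ref{Lem:EigBalanced} to $J$ itself to obtain $\ell S \preceq J \preceq m S$ in the balanced case (or only the lower bound $\ell S \preceq J$ in the dominant case). Next I would observe that if $A \preceq B$ as $n \times n$ symmetric matrices, then $A_{(i)} \preceq B_{(i)}$ for the corresponding lower-right principal blocks: indeed, for any $y \in \R^{n-i+1}$, padding by zeros to $\tilde y \in \R^n$ gives $y^\top (B_{(i)} - A_{(i)}) y = \tilde y^\top (B - A) \tilde y \geq 0$. Applied to the chain above, this yields $\ell S_{(i)} \preceq J_{(i)} \preceq m S_{(i)}$ (respectively, $\ell S_{(i)} \preceq J_{(i)}$ in the dominant case).

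The eigenvalue inequalities now follow from Corollary~7.7.4 of~\cite{HornJohnson} (equivalently, the Courant--Fischer min-max principle), which turns the Loewner inequalities $\ell S_{(i)} \preceq J_{(i)} \preceq m S_{(i)}$ into the entrywise eigenvalue comparisons $\lambda_j(\ell S_{(i)}) \leq \lambda_j(J_{(i)}) \leq \lambda_j(m S_{(i)})$ for all $j$. It then remains to compute the eigenvalues of
\begin{equation*}
S_{(i)} = (n-2)\, \eye_{n-i+1} + \ones_{n-i+1} \ones_{n-i+1}^\top,
\end{equation*}
which are $n-2$ with multiplicity $n-i$ (eigenvectors orthogonal to $\ones$) and $(n-2) + (n-i+1) = 2n - i - 1$ with multiplicity $1$ (eigenvector $\ones$). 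Scaling by $\ell$ and $m$ yields exactly the claimed bounds.

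Essentially no real obstacle remains: the only step that requires care is verifying that $\ell S$ and $m S$ are the right sandwiching matrices on the full $n \times n$ level (already handled by Lemma~\ref{Lem:EigBalanced}), and that the resulting bounds on $J_{(i)}$ come from the spectrum of $S_{(i)}$ rather than from a rescaled version adapted to the size $n-i+1$ — which is why the lemma's constants involve $n-2$ and $2n-i-1$ rather than what a direct application of Lemma~\ref{Lem:EigBalanced} to $J_{(i)}$ as a standalone matrix would give.
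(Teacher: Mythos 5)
Your proof is correct, and it takes a genuinely different route from the paper's. The paper decomposes $J_{(i)} = H + D$, where $H$ is the $(n-i+1)\times(n-i+1)$ \emph{balanced} matrix built from the off-diagonal entries of $J_{(i)}$ and $D$ is the nonnegative diagonal matrix accounting for the contributions $\sum_{k<i} J_{jk}$ to the diagonal from the deleted rows; it then bounds $D$ between $(i-1)\ell\eye$ and $(i-1)m\eye$, applies Lemma~\ref{Lem:EigBalanced} to $H$ at the reduced dimension $n-i+1$, and adds the resulting interval endpoints. You instead apply Lemma~\ref{Lem:EigBalanced} once at the full dimension $n$, pass the Loewner sandwich $\ell S \preceq J \preceq m S$ to the lower-right principal block (a one-line compression argument), and compute the spectrum of $S_{(i)} = (n-2)\eye_{n-i+1} + \ones\ones^\top$ directly. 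Both arguments are sound and produce the same constants; yours is slightly more economical in that it avoids the $H+D$ bookkeeping and the re-derivation of eigenvalue intervals at dimension $n-i+1$, at the cost of having to identify $S_{(i)}$ and its spectrum explicitly (which, as you note, is immediate). Your handling of the diagonally dominant case is also fine: Lemma~\ref{Lem:EigBalanced} still provides $\ell S \preceq J$, and both the restriction to a principal block and the eigenvalue monotonicity of Corollary~7.7.4 preserve a one-sided Loewner inequality.
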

\begin{proof}
Write $J_{(i)} = H + D$, where $H$ is the $(n-i+1){\times}(n-i+1)$ diagonally balanced matrix and $D$ is diagonal with nonnegative entries. Note that $(i-1)\ell\eye \preceq D \preceq (i-1)m\eye$, so $(i-1)\ell\eye + H \preceq J_{(i)} \preceq (i-1)m\eye + H$. Thus by~\cite[Corollary~7.7.4]{HornJohnson} and by applying Lemma~\ref{Lem:EigBalanced} to $H$, we get, for $1 \leq j \leq n-i$,
\begin{equation*}
(n-2)\ell = (n-i-1)\ell + (i-1)\ell \leq \lambda_j \leq (n-i-1)m + (i-1)m = (n-2)m,
\end{equation*}
and for $j = n-i+1$,
\begin{equation*}
(2n-i-1)\ell = 2(n-i)\ell + (i-1)\ell \leq \lambda_{n-i+1} \leq 2(n-i)m + (i-1)m = (2n-i-1)m.
\end{equation*}
If $J$ is diagonally dominant, then $(i-1)\ell\eye + H \preceq J_{(i)}$ and hence the lower bounds still hold.
\end{proof}

\begin{proof_of}{Theorem~\ref{Thm:DetBound}}
For part {\bf  (a)}, suppose $J$ is diagonally dominant. For each $1
\leq i \leq n-1$ we have $J_{ii} \geq \sum_{j \neq i} J_{ij} \geq
b_{(i)}^\top \ones_{n-i}$, and by
Lemma~\ref{Lem:EigBalancedSubmatrix}, the maximum eigenvalue of
$J_{(i+1)}^{\,-1}$ is at most $\frac{1}{(n-2)\ell}$. Thus,
\begin{equation*}
\frac{b_{(i)}^{\top}J_{(i+1)}^{\,-1}b_{(i)}}{J_{ii}}
\leq \frac{1}{(n-2)\ell} \frac{b_{(i)}^{\top} b_{(i)}}{ J_{ii}}
\leq \frac{1}{(n-2)\ell} \: \frac{b_{(i)}^{\top} b_{(i)}}{b_{(i)}^\top \ones}
\leq \frac{\sqrt{(n-i+1)} \: m}{(n-2)\ell} \: \frac{\sqrt{b_{(i)}^{\top} b_{(i)}}}{b_{(i)}^\top \ones}.
\end{equation*}
Since each entry of $b_{(i)}$ is bounded by $\ell$ and $m$, the reverse Cauchy-Schwarz inequality~\cite[Ch.~5]{CSMC} gives us
\begin{equation*}
\frac{b_{(i)}^{\top}J_{(i+1)}^{\,-1}b_{(i)}}{J_{ii}} \leq \frac{\sqrt{(n-i+1)} \: m}{(n-2)\ell} \: \frac{\ell+m}{2\sqrt{\ell m(n-i+1)}} = \frac{1}{2(n-2)} \: \sqrt{\frac{m}{\ell}} \: \left(1 + \frac{m}{\ell}\right).
\end{equation*}
Substituting this inequality into~\eqref{Eq:HadamardProdInit} gives us the desired bound for part {\bf (a)}.

For part {\bf (b)}, suppose $J$ is diagonally balanced, so $J_{ii} \leq (n-1)m$ for each $1 \leq i \leq n-1$. By Lemma~\ref{Lem:EigBalancedSubmatrix} we know that the minimum eigenvalue of $J_{(i+1)}^{\,-1}$ is at least $\frac{1}{(2n-i-2)m}$, so
\begin{equation*}
\frac{b_{(i)}^{\top}J_{(i+1)}^{\,-1}b_{(i)}}{J_{ii}}
\geq \frac{1}{(2n-i-2)m} \: \frac{b_{(i)}^{\top} b_{(i)}}{J_{ii}}
\geq \frac{1}{2(n-1)m} \: \frac{(n-i+1) \ell^2}{(n-1)m}
= \frac{(n-i+1)\ell^2}{2(n-1)^2 m^2}.
\end{equation*}
Substituting this into~\eqref{Eq:HadamardProdInit} and using the inequality $1+x \leq \exp(x)$ gives us the desired bound:
\begin{flalign*}
&& \frac{\det(J)}{\prod_{i=1}^n J_{ii}}
\leq \prod_{i=1}^{n-1} \exp \left(-\frac{(n-i+1)\ell^2}{2(n-1)^2 m^2}\right)
= \exp\left(-\frac{(n+2)\ell^2}{4(n-1)m^2}\right)
\leq \exp\left(-\frac{\ell^2}{4m^2}\right). && \qed 
\end{flalign*}
\noqed
\end{proof_of}

We close this section with several examples.

\begin{example}\label{S_ex}
The matrix $S = (n-2)\eye_n + \ones_n \ones_n^{\top}$ has eigenvalues $\{n-2,\dots,n-2,2(n-1)\}$, so
\begin{flalign*}
&&\frac{\det(S)}{\prod_{i=1}^n S_{ii}} = \frac{2(n-2)^{n-1}(n-1)}{(n-1)^{n}} = 2\left(1- \frac{1}{n-1}\right)^{n-1} \to  \frac{2}{e}
\quad \text{ as } n \to \infty. && \qed
\end{flalign*}
\end{example}

\begin{example}
When $J$ is strictly diagonally dominant, the ratio
$\det(J)/\prod_{i=1}^n J_{ii}$ can be arbitrarily close to $1$. For
instance, consider $J = \alpha\eye_n + \ones_n \ones_n^\top$ with
$\alpha \geq n-2$, which has eigenvalues $\{(n+\alpha), \alpha, \dots, \alpha\}$ so
\begin{flalign*}
&& \frac{\det(J)}{\prod_{i=1}^n J_{ii}} = \frac{(n+\alpha)
  \alpha^{n-1}}{(\alpha+1)^n} \to 1 \quad \text{ as } \alpha \to
\infty. &&\qed
\end{flalign*}
\end{example}

\begin{example}\label{NonupperBndEx}
The following example demonstrates that we need an upper bound on the entries of $J$ in Theorem \ref{Thm:DetBound} ({\bf a}). Let $n = 2k$ for some $k \in \N$, and consider the matrix $J$ in the following block form:
\begin{equation*}
J = \begin{pmatrix} A & B \\ B & A \end{pmatrix}, \quad A =
(km+k\ell-2\ell)\eye_k + \ell \ones_k\ones_k^\top,\,\, B = m\ones_k\ones_k^\top.
\end{equation*}
By the determinant block formula (since $A$ and $B$ commute), we have
\begin{equation*}
\begin{split}
\det(J) &= \det(A^2-B^2) \\
&= \det \Big[ (km+k\ell-2\ell)^2 \eye_k + (2k\ell m+3k\ell^2-4\ell^2 - km^2) \ones_k\ones_k^\top \Big] \\
&= 4\ell(k-1)(km+k\ell-\ell) \cdot
    (km+k\ell-2\ell)^{2k-2},
\end{split}
\end{equation*}
where the last equality is obtained by considering the eigenvalues of $A^2-B^2$. Then
\begin{equation*}
\begin{split}
\frac{\det(J)}{\prod_{i=1}^n J_{ii}}
&= \frac{4\ell(k-1)(km+k\ell-\ell) \cdot
    (km+k\ell-2\ell)^{2k-2}}{\big(km+k\ell-\ell\big)^{2k}} \to \frac{4\ell}{\ell+m} \exp\left(-\frac{2\ell}{\ell+m}\right) \ \  \text{as } k \to \infty.
\end{split}
\end{equation*}
Note that the last quantity above tends to $0$ as $m/\ell \to
\infty$. \qed
\end{example}

\section{Open Problems}\label{Sec:Problems}

As an analogue to Theorem~\ref{Thm:GeneralMain}, we also conjecture a tight lower bound for symmetric diagonally
dominant matrices $J > 0$ whose off-diagonal entries
and diagonal dominances are bounded above. Observe that when $J_{ij} \leq
m$ and $\Delta_i(J) \leq \delta$ for all $i \neq j$, then $J \leq
(m(n-2)+\delta) \eye_n + m \ones_n \ones_n^\top$.
\begin{conjecture} Let $n \geq 3$ and let $S(\alpha,m) = \alpha \eye_n
  + m \ones_n \ones_n^\top$. For all $n \times n$ symmetric diagonally
  dominant matrices $0 < J \leq S(\alpha,m)$, we have
  \begin{align*}
    \|J^{-1} \|_\infty \geq \|S(\alpha,m)^{-1} \|_\infty =
    \frac{\alpha + 2m(n-1)}{\alpha(\alpha+mn)}.
  \end{align*}
Moreover, equality is achieved if and only if $J = S(\alpha,m)$.
\end{conjecture}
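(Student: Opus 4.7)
The plan is to follow the same three-step architecture used to prove Theorem~\ref{Thm:GeneralMain}, with maximization replaced by minimization. Let $S = S(\alpha,m)$ and consider the compact polytope
\begin{equation*}
\mathcal{D}' = \{J \text{ symmetric diagonally dominant} : 0 \leq J_{ij} \leq m,\; \textstyle\sum_{j \neq i}J_{ij} \leq J_{ii} \leq \alpha + m\}.
\end{equation*}
Extending $\|J^{-1}\|_\infty = +\infty$ on the (lower-dimensional) singular locus makes $J \mapsto \|J^{-1}\|_\infty$ lower semicontinuous on $\mathcal{D}'$, so the infimum is attained at some nonsingular $J^\ast \in \mathcal{D}'$, and it suffices to show $J^\ast = S$.

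For Step 1 (corner reduction), I would rerun Proposition~\ref{Prop:CornerCaseConnected} and Lemma~\ref{Prop:Deformation} essentially verbatim. The Sherman-Morrison-Woodbury identity still yields
\begin{equation*}
\bigl|(J^\ast + t\Basis_{ij})^{-1}_q\bigr|_1 = \|(J^\ast)^{-1}\|_\infty - \frac{\phi_{ij}t}{1+\eta_{ij}t} + \frac{\psi_{ij}|t|}{1+\eta_{ij}t}
\end{equation*}
along every feasible direction; at a minimum the inequality in Lemma~\ref{Prop:Deformation} reverses to $\phi_{ij}t \leq \psi_{ij}|t|$, but symmetry under $t \mapsto -t$ still forces $\phi_{ij}=\psi_{ij}=0$. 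Iterating, every minimizer is path-connected through the set of minimizers to a corner matrix whose off-diagonal entries lie in $\{0,m\}$ and whose diagonal dominances sit at one of their (graph-dependent) endpoints.

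For Step 3 (local analysis), I would dualize the argument of Section~\ref{Sec:SmallT}. Writing $J = S - tP$ for a nonzero $P \geq 0$ admissible at $S$, the Neumann expansion gives $J^{-1} = S^{-1} + tS^{-1}PS^{-1} + O(t^2)$, and the sign pattern of $S^{-1}$ (positive on the diagonal, negative off it) together with the row-sum computation in Proposition~\ref{Prop:g(t)} produces
\begin{equation*}
\|J^{-1}\|_\infty = \|S^{-1}\|_\infty + \xi t + o(t),\qquad \xi = \textstyle\max_i\bigl(Q_{ii}-\sum_{j\neq i}Q_{ij}\bigr),
\end{equation*}
where $Q = S^{-1}PS^{-1}$. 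The analogue of Proposition~\ref{Prop:PosDiag} must now be established without diagonal dominance of $P$; the key point is that for any nonzero $P\geq 0$ one can choose an index $i$ outside the support of the nonzero rows of $P$ so that the term $\sum_{j,k\neq i}P_{jk}$ in the expansion dominates, forcing $\xi>0$ and making $S$ a strict local minimum along every feasible ray.

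The hard part is Step 2 (corner evaluation). Unlike the upper-bound case, where the relevant corners live at infinity and collapse through the pseudoinverse formula of Proposition~\ref{Thm:newNformula}, the lower-bound corners sit at a fixed finite location whose inverse depends nontrivially on $\alpha$, $m$, and the combinatorics of \emph{two} objects: the graph $G$ recording where $J_{ij}=m$, and the subset $T\subseteq\{1,\ldots,n\}$ recording where $J_{ii}$ attains its upper bound $\alpha+m$. I would first reduce to the balanced sub-case $T=\emptyset$ by showing that raising any $J_{ii}$ within the feasible box can only increase $\|J^{-1}\|_\infty$; a single-parameter Sherman-Morrison calculation analogous to Lemma~\ref{Prop:Deformation} provides the required monotonicity. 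In the balanced case $J = S - mP$ with $P$ the signless Laplacian of the complement graph $H$, one may block-diagonalize along the components of $H$ as in Proposition~\ref{Thm:newNformula} and reduce to verifying
\begin{equation*}
\bigl\|\bigl(\alpha \eye_{p+q} + m(\ones_{p+q}\ones_{p+q}^\top - P_c)\bigr)^{-1}\bigr\|_\infty \geq \|S^{-1}\|_\infty
\end{equation*}
for each component $P_c$ of $P$. This inequality is the technical crux, and I expect it to resist a clean Loewner-order shortcut since the relevant norm is $\|\cdot\|_\infty$ rather than $\|\cdot\|_2$; it will likely require case analysis on the component sizes $(p_i,q_i)$ mirroring Corollary~\ref{Thm:formulaforNinfinity}, with equality holding only when $P=0$, delivering uniqueness as in the proof of Theorem~\ref{Thm:GeneralMain}.
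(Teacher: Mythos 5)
This statement is one of the paper's open \emph{conjectures} (Section~\ref{Sec:Problems}); the paper offers no proof, so there is no argument to compare against, and any assessment must be of your plan on its own merits.

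Before getting to the plan, note that the conjecture as literally written appears to be false, and your compact domain $\mathcal{D}'$ inherits the problem. Take $J_\epsilon = (\alpha+\epsilon)\eye_n + (m-\epsilon)\ones_n\ones_n^\top$ for $0 < \epsilon < m$. Then $J_\epsilon$ is symmetric, entrywise positive, diagonally dominant, and $J_\epsilon \leq S(\alpha,m)$ entrywise (the diagonal is unchanged, the off-diagonals decrease). But $\|J_\epsilon^{-1}\|_\infty \to 1/(\alpha+m)$ as $\epsilon \to m$, which for every $n \geq 3$ is strictly smaller than $\|S(\alpha,m)^{-1}\|_\infty = (\alpha+2m(n-1))/(\alpha(\alpha+mn))$; one can also check directly (for $n=3$, say) that the inequality already fails for small $\epsilon > 0$. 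The sentence preceding the conjecture makes clear the intended hypothesis is $J_{ij} \leq m$ \emph{together with} $\Delta_i(J) \leq \alpha - (n-2)m$, which is strictly stronger than $J \leq S(\alpha,m)$ and excludes $J_\epsilon$ (since $\Delta_i(J_\epsilon) = \alpha - (n-2)m + (n-1)\epsilon > \Delta_i(S)$). Your $\mathcal{D}'$ imposes only $J_{ii} \leq \alpha+m$, not $\Delta_i(J) \leq \Delta_i(S)$, so its minimum lies strictly below $\|S^{-1}\|_\infty$ and Step~3 cannot show $S$ is a local minimizer there — indeed, for $P = \ones\ones^\top - \eye$ (an admissible direction in $\mathcal{D}'$ with $\Delta_i(P) < 0$), the first term in the paper's expansion of $Q_{ii} - \sum_{j\neq i}Q_{ij}$ in Proposition~\ref{Prop:PosDiag} is negative and dominates, so your $\xi$ is negative, not positive.

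Even after replacing $\mathcal{D}'$ with the intended domain, Step~1 has a genuine gap. In the paper's maximization argument, the chain $|(J+tE_{ij})^{-1}_q|_1 \leq \|(J+tE_{ij})^{-1}\|_\infty \leq \|J^{-1}\|_\infty$ gives $\phi_{ij} t \geq \psi_{ij}|t|$ for both signs of $t$, and since $\psi_{ij} \geq 0$ this forces $\phi_{ij} \geq 0$ and $\phi_{ij} \leq 0$ simultaneously, hence $\phi_{ij} = \psi_{ij} = 0$. At a minimizer the inequality you derive is $\phi_{ij} t \leq \psi_{ij}|t|$, which only yields $|\phi_{ij}| \leq \psi_{ij}$ and is perfectly consistent with $\psi_{ij} > 0$; the $t \mapsto -t$ symmetry does \emph{not} force both to vanish. (Moreover, at a minimizer the first inequality in the chain points the wrong way: a bound on the max column norm does not control the distinguished $q$-th column unless you separately argue the active column is unique and stable.) Consequently you cannot conclude the norm is constant along the perturbation and cannot flow a minimizer to a corner; a minimizer can sit in the interior of the box, as the counterexample above illustrates concretely. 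This is a structural asymmetry between the max and min problems, not a detail, and it is precisely why the paper lists this direction as open. You also candidly flag Step~2 as the unresolved technical crux, so the plan is incomplete there as well.
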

In working towards the proof of this conjecture, the following problem
may be useful. 
\begin{problem}
Given a signless Laplacian $P$ of a graph $G$, give an exact
combinatorial formula for the entries of $(S+tP)^{-1}$ for any $t >
0$. More precisely, since each entry of $(S+tP)^{-1}$ is a rational function of
$t$, derive a formula for the coefficients of this rational function in
terms of the combinatorics of the graph $G$.
\end{problem}

We also conjecture that a dependence on the largest entry can be removed in Theorem \ref{Thm:DetBound} (\textbf{b}).

\begin{conjecture}
For a positive, diagonally balanced symmetric $J$, we have the bound:
\[ \frac{\det(J)}{\prod_{i=1}^n J_{ii}} \leq \frac{\det(S)}{(n-1)^{n}} = 2\left(1-\frac{1}{n-1}\right)^{n-1} \to \frac{2}{e}.\]
\end{conjecture}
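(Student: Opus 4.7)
The plan is to reduce the conjecture to a single application of AM--GM on the spectrum of a suitably normalized version of $J$. Let $D = \mathrm{diag}(J_{11},\ldots,J_{nn})$ and set $C = D^{-1/2} J D^{-1/2}$, the ``correlation-type'' matrix with $1$'s on the diagonal and off-diagonal entries $J_{ij}/\sqrt{J_{ii}J_{jj}}$. By Lemma~\ref{Lem:EigBalanced} any positive diagonally balanced symmetric $J$ satisfies $J \succ 0$, hence $C \succ 0$; moreover $\det(C) = \det(J)/\prod_i J_{ii}$ and $\mathrm{tr}(C) = n$. The conjecture is therefore equivalent to the claim $\det(C) \leq 2\bigl(\tfrac{n-2}{n-1}\bigr)^{n-1}$.

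The crux is to use the balance hypothesis to exhibit the eigenvalue $2$ in the spectrum of $C$ explicitly. Writing $d = (J_{11},\ldots,J_{nn})^{\top}$, balance means $\sum_j J_{ij} = J_{ii} + \sum_{j\neq i} J_{ij} = 2 J_{ii}$, i.e.\ $J\mathbf{1} = 2d$. Taking $v = D^{1/2}\mathbf{1} = (\sqrt{J_{11}},\ldots,\sqrt{J_{nn}})^{\top}$, a one-line computation gives
$$ C v \;=\; D^{-1/2} J D^{-1/2} v \;=\; D^{-1/2} J \mathbf{1} \;=\; 2\, D^{-1/2} d \;=\; 2 v, $$
so $v$ is an eigenvector of $C$ with eigenvalue $2$.

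Listing the eigenvalues of $C$ as $\lambda_1 = 2, \lambda_2, \ldots, \lambda_n > 0$, the constraint $\mathrm{tr}(C) = n$ forces $\lambda_2 + \cdots + \lambda_n = n-2$, and so AM--GM yields
$$ \prod_{i=2}^{n} \lambda_i \;\leq\; \Bigl(\tfrac{n-2}{n-1}\Bigr)^{n-1}, $$
hence $\det(C) = 2\prod_{i=2}^{n}\lambda_i \leq 2\bigl(\tfrac{n-2}{n-1}\bigr)^{n-1}$, which is exactly the conjectured bound. There is no real obstacle in this argument: the entire proof hinges on the observation that the balance condition plants the eigenvalue $2$ into the spectrum of the normalized matrix $C$, after which AM--GM closes the estimate at once. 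Equality in AM--GM forces $\lambda_2 = \cdots = \lambda_n = (n-2)/(n-1)$, and a short spectral-decomposition check would show that together with $\mathrm{diag}(C) = I$ this compels $C = S/(n-1)$ and therefore $J = cS$ for some $c > 0$, yielding the uniqueness of the extremizer up to scaling as a bonus.
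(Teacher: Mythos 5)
Your argument resolves what the paper leaves as an \emph{open conjecture}: the paper offers no proof of this statement (it appears in Section~\ref{Sec:Problems}, Open Problems), so there is nothing to compare against on the paper's side.

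Your proof appears to be correct and complete. The decisive insight is the combination of the correlation normalization $C = D^{-1/2} J D^{-1/2}$ with the observation that the balance condition $J\ones = 2d$ (where $d$ is the diagonal) is exactly the statement that $D^{1/2}\ones$ is an eigenvector of $C$ with eigenvalue $2$. From there, the constraints $\mathrm{tr}(C) = n$ and $C \succ 0$ (the latter via Lemma~\ref{Lem:EigBalanced}) reduce the inequality to the arithmetic--geometric mean inequality on the remaining $n-1$ eigenvalues, which sum to $n-2$. This yields $\det(C) \leq 2\bigl(\tfrac{n-2}{n-1}\bigr)^{n-1}$, which equals $\det(S)/(n-1)^n$ as required. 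The extremizer analysis is also sound: AM--GM equality forces the spectrum to be $\{2, \tfrac{n-2}{n-1}, \ldots, \tfrac{n-2}{n-1}\}$, the spectral decomposition together with $\mathrm{diag}(C) = \eye$ forces all diagonal entries of $J$ to coincide, and hence $J = cS$. This is a clean, elementary argument. I would double-check the boundary case $n = 2$ (where $S$ is singular and both sides vanish) if you intend to state the result without the $n \geq 3$ hypothesis, but for $n \geq 3$, which is the paper's standing assumption, the proof is complete.
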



\end{document}